\newcommand{\nc}{\newcommand}
\nc{\exto}[1]{\stackrel{#1}{\longrightarrow}}
\nc{\dlim}{{\mathop{\lim\limits_{\longrightarrow}\,}}}
\nc{\ilim}{{\mathop{\lim\limits_{\longleftarrow}\,}}}
\nc{\hocolim}{{\mathop{\sf hocolim}\,}}
\nc{\holim}{{\mathop{\sf holim}}}
\nc{\lan}{\big\langle}
\nc{\ran}{\big\rangle}
\nc{\kk}{{\mathsf{k}}}
\nc{\C}{{\mathbb{C}}}
\nc{\HH}{{\mathbf{H}}}
\nc{\DD}{{\mathbb{D}}}
\nc{\LL}{{\mathbb{L}}}
\nc{\PP}{{\mathbb{P}}}
\nc{\QQ}{{\mathbb{Q}}}
\nc{\RR}{{\mathbb{R}}}
\nc{\ZZ}{{\mathbb{Z}}}
\nc{\CA}{{\mathcal{A}}}
\nc{\CB}{{\mathcal{B}}}
\nc{\CC}{{\mathcal{C}}}
\nc{\D}{{\mathcal{D}}}
\nc{\SCA}{{\mathscr{A}}}
\nc{\SCB}{{\mathscr{B}}}
\nc{\SCC}{{\mathscr{C}}}
\nc{\SCD}{{\mathscr{D}}}
\nc{\SCE}{{\mathscr{E}}}
\nc{\SCH}{{\mathscr{H}}}
\nc{\CE}{{\mathcal{E}}}
\nc{\CF}{{\mathcal{F}}}
\nc{\CG}{{\mathcal{G}}}
\nc{\CH}{{\mathcal{H}}}
\nc{\CL}{{\mathcal{L}}}
\nc{\CM}{{\mathcal{M}}}
\nc{\CN}{{\mathcal{N}}}
\nc{\CO}{{\mathcal{O}}}
\nc{\CQ}{{\mathcal{Q}}}
\nc{\CR}{{\mathcal{R}}}
\nc{\CS}{{\mathcal{S}}}
\nc{\CT}{{\mathcal{T}}}
\nc{\CU}{{\mathcal{U}}}
\nc{\CV}{{\mathcal{V}}}
\nc{\CW}{{\mathcal{W}}}
\nc{\CX}{{\mathcal{X}}}
\nc{\CY}{{\mathcal{Y}}}
\nc{\CMo}{{\mathcal{M}^\circ}}
\nc{\Co}{{{C}^\circ}}
\nc{\BY}{{\overline{Y}}}
\nc{\BYD}{{\overline{Y}{}^{|D|}}}
\nc{\OZ}{{\overline{Z}}}
\nc{\bg}{{\bar{g}}}
\nc{\ba}{{\mathbf{a}}}
\nc{\bb}{{\mathbf{b}}}
\nc{\be}{{\mathbf{e}}}
\nc{\bq}{{\mathbf{q}}}
\nc{\bx}{{\mathbf{x}}}
\nc{\by}{{\mathbf{y}}}
\nc{\bz}{{\mathbf{z}}}
\nc{\BB}{{\mathbf{B}}}
\nc{\BC}{{\mathbf{C}}}
\nc{\BE}{{\mathbf{E}}}
\nc{\BD}{{\mathbf{D}}}
\nc{\BG}{{\mathbf{G}}}
\nc{\BM}{{\mathbf{M}}}
\nc{\BP}{{\mathbf{P}}}
\nc{\BU}{{\mathbf{U}}}
\nc{\BZ}{{\mathbf{Z}}}
\nc{\BPr}{{\mathsf{P}}}
\nc{\BL}{{\mathbf{L}}}
\nc{\BR}{{\mathbf{R}}}
\nc{\BRO}[1]{{{\mathbf{R}}^{\circ}_{#1}}}
\nc{\BRD}[1]{{{\mathbf{R}}^{|D|}_{#1}}}
\nc{\BRP}[1]{{{\mathbf{R}}^{1}_{#1}}}
\nc{\BRTP}[1]{{{\mathbf{\tilde{R}}}{}^{1}_{#1}}}
\nc{\BS}{{\mathbf{S}}}
\nc{\BMS}{{{\mathbf{M}}^{{s}}}}
\nc{\BMSS}{{{\mathbf{M}}^{{ss}}}}
\nc{\BMZ}{{\mathbf{M}^{\circ}}}
\nc{\BCL}{{\mathbf{L}}}
\nc{\PCC}{{{}^\perp\CC}}
\nc{\Cl}{{\mathsf{Cliff}}}
\nc{\Clev}{{\mathop{\mathsf{Cliff}}^{\circ}}}
\nc{\FA}{{\mathfrak{A}}}
\nc{\FB}{{\mathfrak{B}}}
\nc{\FU}{{\mathfrak{U}}}
\nc{\fa}{{\mathfrak{a}}}
\nc{\fb}{{\mathfrak{b}}}
\nc{\fg}{{\mathfrak{g}}}
\nc{\fn}{{\mathfrak{n}}}
\nc{\fp}{{\mathfrak{p}}}
\nc{\FD}{{\mathfrak{D}}}
\nc{\FE}{{\mathfrak{E}}}
\nc{\FL}{{\mathfrak{L}}}
\nc{\FM}{{\mathfrak{M}}}
\nc{\FS}{{\mathsf{S}}}
\nc{\se}{{\mathsf{e}}}
\nc{\sfc}{{\mathsf{c}}}
\nc{\sfch}{{\mathsf{ch}}}
\nc{\sfh}{{\mathsf{h}}}
\nc{\SK}{{\mathsf{K}}}
\nc{\SM}{{\mathsf{M}}}
\nc{\SO}{{\mathsf{O}}}
\nc{\SQ}{{\mathsf{Q}}}
\nc{\SPV}{{\mathsf{S}^+\mathsf{V}}}
\nc{\SMV}{{\mathsf{S}^-\mathsf{V}}}
\nc{\SPMV}{{\mathsf{S}^\pm\mathsf{V}}}
\nc{\SX}{{S_X}}
\nc{\SY}{{S_Y}}
\nc{\phipsi}{{q}}
\nc{\eps}{\varepsilon}
\nc{\pim}{{\pi_-}}
\nc{\pip}{{\pi_+}}
\nc{\TE}{{\tilde{\CE}}}
\nc{\TQ}{{\tilde{Q}}}
\nc{\TCF}{{\tilde{\CF}}}
\nc{\TCG}{{\tilde{\CG}}}
\nc{\TCL}{{\tilde{\CL}}}
\nc{\TF}{{\tilde{F}}}
\nc{\TW}{{\tilde{W}}}
\nc{\TCC}{{\tilde{\CC}}}
\nc{\TCX}{{\tilde{\CX}}}
\nc{\TCY}{{\tilde{\CY}}}
\nc{\TPhi}{{\tilde{\Phi}}}
\nc{\OPhi}{{\bar{\Phi}}}
\nc{\txi}{{\tilde{\xi}}}
\nc{\tp}{{\tilde{p}}}
\nc{\tq}{{\tilde{q}}}
\nc{\tzeta}{{\tilde{\zeta}}}
\nc{\tpi}{{\tilde{\pi}}}
\nc{\halpha}{{\hat{\alpha}}}
\nc{\HCA}{{\hat{\CA}}}
\nc{\HCB}{{\hat{\CB}}}
\nc{\HCC}{{\hat{\CC}}}
\nc{\HE}{{\widehat{\CE}}}
\nc{\HX}{{\hat{X}}}
\nc{\hxi}{{\hat{\xi}}}
\nc{\UH}{{\mathcal{H}}}
\nc{\TM}{{\widetilde{M}}}
\nc{\TCM}{{\widetilde{\CM}}}
\nc{\TU}{{\widetilde{U}}}
\nc{\TX}{{\widetilde{X}}}
\nc{\TY}{{\widetilde{Y}}}
\nc{\TYO}{{{\widetilde{Y}}^\circ}}
\nc{\barf}{{\bar{f}}}
\nc{\te}{{\tilde{e}}{}}
\nc{\tf}{{\tilde{f}}}
\nc{\tg}{{\tilde{g}}}
\nc{\ti}{{\tilde{\imath}}}
\nc{\tj}{{\tilde{\jmath}}}
\nc{\ty}{{\tilde{y}}}
\nc{\tphi}{{\tilde{\phi}}}
\nc{\urho}{{\underline{\rho}}}
\nc{\LRA}{\Leftrightarrow}
\nc{\RA}{\Rightarrow}
\nc{\lotimes}{\mathbin{\mathop{\otimes}\limits^{\mathbb{L}}}}
\nc{\CEnd}{\mathop{\mathcal{E}\mathit{nd}}\nolimits}
\nc{\CExt}{\mathop{\mathcal{E}\mathit{xt}}\nolimits}
\nc{\CHom}{\mathop{\mathcal{H}\mathit{om}}\nolimits}
\nc{\RH}{\mathop{{\mathsf{R}}\Gamma}\nolimits}
\nc{\RGamma}{\mathop{{\mathsf{R}}\Gamma}\nolimits}
\nc{\RHom}{\mathop{\mathsf{RHom}}\nolimits}
\nc{\RCHom}{\mathop{\mathsf{R}\mathcal{H}\mathit{om}}\nolimits}
\nc{\RG}{\mathop{\mathsf{R\Gamma}}\nolimits}
\nc{\Hom}{\mathop{\mathsf{Hom}}\nolimits}
\nc{\Ext}{\mathop{\mathsf{Ext}}\nolimits}
\nc{\End}{\mathop{\mathsf{End}}\nolimits}
\nc{\Tor}{\mathop{\mathsf{Tor}}\nolimits}
\nc{\Tordim}{\mathop{\mathsf{Tor}\text{\rm-}\mathsf{dim}}\nolimits}
\nc{\Hilb}{\mathop{\mathsf{Hilb}}\nolimits}
\nc{\Spec}{\mathop{\mathsf{Spec}}\nolimits}
\nc{\Pic}{\mathop{\mathsf{Pic}}\nolimits}
\nc{\Tr}{\mathop{\mathsf{Tr}}\nolimits}
\nc{\tr}{\mathop{\mathsf{tr}}\nolimits}
\nc{\sd}{\mathop{\mathsf{sd}}\nolimits}
\nc{\LInd}{\mathop{\mathsf{LInd}}\nolimits}
\nc{\Res}{\mathop{\mathsf{Res}}\nolimits}
\nc{\Cone}{\mathop{\mathsf{Cone}}\nolimits}
\nc{\Fiber}{\mathop{\mathsf{Fiber}}\nolimits}
\nc{\Ker}{\mathop{\mathsf{Ker}}\nolimits}
\nc{\Coker}{\mathop{\mathsf{Coker}}\nolimits}
\nc{\codim}{\mathop{\mathsf{codim}}\nolimits}
\nc{\sing}{{\mathsf{sing}}}
\nc{\supp}{\mathop{\mathsf{supp}}}
\nc{\vol}{\mathop{\mathsf{vol}}\nolimits}
\nc{\ch}{\mathop{\mathsf{ch}}\nolimits}
\nc{\perf}{{\mathsf{perf}}}
\nc{\rank}{\mathop{\mathsf{rank}}}
\nc{\Pf}{{\mathsf{Pf}}}
\nc{\Gr}{{\mathsf{Gr}}}
\nc{\OGr}{{\mathsf{OGr}}}
\nc{\Flag}{{\mathsf{Fl}}}
\nc{\Kosz}{{\mathsf{Kosz}}}
\nc{\LGr}{{\mathsf{LGr}}}
\nc{\GTGr}{{\mathsf{G_2Gr}}}
\nc{\GTF}{{\mathsf{G_2F}}}
\nc{\OF}{{\mathsf{OF}}}
\nc{\Fl}{{\mathsf{Fl}}}
\nc{\Bl}{{\mathsf{Bl}}}
\nc{\GL}{{\mathsf{GL}}}
\nc{\PGL}{{\mathsf{PGL}}}
\nc{\SL}{{\mathsf{SL}}}
\nc{\SP}{{\mathsf{Sp}}}
\nc{\Spin}{{\mathsf{Spin}}}
\nc{\Tot}{{\mathsf{Tot}}}
\nc{\ev}{{\mathsf{ev}}}
\nc{\od}{{\mathsf{odd}}}
\nc{\coev}{{\mathsf{coev}}}
\nc{\id}{{\mathsf{id}}}
\nc{\opp}{{\mathsf{opp}}}
\nc{\PS}{{{\PP^3}}}
\nc{\Qu}{{{Q^3}}}
\nc{\tdim}{\mathop{\Tor\dim}}
\nc{\ecart}{{\fbox{$\scriptstyle\mathsf{EC}$}}}
\nc{\ad}{{\mathop{\mathsf ad}}}
\nc{\sg}{{\mathop{\mathsf sg}}}
\nc{\hf}{{\mathop{\mathsf hf}}}
\nc{\gr}{{\mathop{\mathsf gr}}}
\nc{\qgr}{{\mathop{\mathsf qgr}}}
\nc{\Perf}{{\mathop{{\mathsf{Perf}}}}}
\nc{\Coh}{{\mathop{{\mathsf{Coh}}}}}
\nc{\coh}{{\mathop{{\mathsf{coh}}}}}
\nc{\dgm}{{\mathop{{\mathsf{dgm}\text{-}}}}}
\nc{\acy}{{\mathop{{\mathsf{acycl}\text{-}}}}}
\nc{\hproj}{{\mathop{{\mathsf{hproj}\text{-}}}}}
\nc{\com}{{\mathop{{\mathsf{com}}}}}
\nc{\comp}{{\mathop{{\mathsf{comp}}}}}
\nc{\Ab}{{\mathop{\mathcal{A}\mathit{b}}}}
\nc{\Open}{{\mathop{\mathsf{Open}}}}
\nc{\Ccoh}{{\mathop{\mathsf Ccoh}}}
\nc{\Qcoh}{{\mathop{\mathsf{Qcoh}}}}
\nc{\arcs}{{\mathop{\mathsf{arcs}}}}
\nc{\Shuf}{{\mathop{\mathsf{Shuf}}}}
\nc{\circles}{{\mathop{\mathsf{circles}}}}
\nc{\normcircles}{\underline{\mathop{\mathsf{circles}}}}
\nc{\At}{{\mathop{\mathsf{At}}\nolimits}}
\nc{\tra}{{\mathsf{T}}}
\nc{\fsl}{{\mathfrak{sl}}}
\nc{\fso}{{\mathfrak{so}}}
\nc{\fgl}{{\mathfrak{gl}}}
\nc{\AAV}{{\mathcal{AAV}}}
\nc{\Rep}{{\mathsf{Rep}}}
\nc{\Cubics}{{{\mathcal{S}}_3}}
\nc{\VFT}{{{\mathcal{S}}_{14}}}
\nc{\VFTE}{{{\mathcal{N}}_{\mathrm{reg,sm}}}}
\nc{\MX}{{\CM_X}}
\nc{\MY}{{\CM_Y}}
\nc{\MYE}{{\CM_{Y,\CE}}}
\nc{\Yd}{{Y_d}}
\nc{\Yfive}{{Y_5}}
\nc{\Xg}{{X_{2g-2}}}
\nc{\Xtt}{{X_{22}}}
\nc{\Xst}{{X_{16}}}
\nc{\Xtw}{{X_{12}}}
\nc{\Xe}{{X_{8}}}
\nc{\Xf}{{X_{4}}}
\nc{\git}{{/\!\!/\!{}_\chi}}
\nc{\HOH}{{\mathsf H\mathsf H}}
\nc{\NHH}{{\mathsf N\HOH}}
\nc{\HHE}{{\mathsf H\mathsf E}}
\nc{\he}{{\mathrm{h}}}
\nc{\ph}{{\mathrm{ph}}}
\nc{\ac}{{\mathrm{ac}}}
\theoremstyle{plain}
\newtheorem{theorem}{Theorem}[section]
\newtheorem{lemma}[theorem]{Lemma}
\newtheorem{proposition}[theorem]{Proposition}
\newtheorem{corollary}[theorem]{Corollary}
\theoremstyle{definition}
\newtheorem{definition}[theorem]{Definition}
\theoremstyle{remark}
\newtheorem{remark}[theorem]{Remark}
\title[Height of exceptional collections and Hochschild cohomology]{Height of exceptional collections\\and Hochschild cohomology of quasiphantom categories}
\author{Alexander Kuznetsov}
\address{\sloppy
\parbox{0.9\textwidth}{
% {\bf A.K.: }
Algebraic Geometry Section, Steklov Math Institute,
8 Gubkin str., Moscow 119991 Russia
\hfill\\[5pt]
% \phantom{{\bf A.K.: }}
The Poncelet Laboratory, Independent University of Moscow
\hfill\\[5pt]
% \phantom{{\bf A.K.: }}
Laboratory of Algebraic Geometry, SU-HSE%, 7 Vavilova Str., Moscow, Russia, 117312
\hfill
}\bigskip}
\email{akuznet@mi.ras.ru}
\date{}
\thanks{I was partially supported by
RFFI grants 10-01-93110, 10-01-93113, 11-01-00393, 11-01-00568, \hbox{11-01-92613-KO-a}, 12-01-33024, NSh-5139.2012.1,
the grant of the Simons foundation, and by AG Laboratory SU-HSE, RF government grant, ag.11.G34.31.0023.}
\begin{document}

\begin{abstract}
We define the normal Hochschild cohomology of an admissible subcategory of the derived category
of coherent sheaves on a smooth projective variety $X$ --- a graded vector space
which controls the restriction morphism from the Hochschild cohomology of $X$
to the Hochschild cohomology of the orthogonal complement of this admissible subcategory. 
When the subcategory is generated by an exceptional collection, we define its new invariant (the height)
and show that the orthogonal to an exceptional collection of height $h$ in the derived category of a smooth 
projective variety $X$ has the same Hochschild cohomology as $X$ in degrees up to $h - 2$. 
We use this to describe the second Hochschild cohomology of quasiphantom categories 
in the derived categories of some surfaces of general type. We also give necessary 
and sufficient conditions of fullness of an exceptional collection in terms of its 
height and of its normal Hochschild cohomology.
\end{abstract}

\maketitle

% \section{Introduction}
% \renewcommand{\thesubsection}{\arabic{subsection}}

\section{Introduction}

Assume $X$ is a smooth projective variety and $\BD^b(\coh(X)) = \langle \CA,\CB \rangle$ is a semiorthogonal
decomposition. The goal of the present paper is to describe the Hochschild cohomology of $\CA$ and the restriction
morphism $\HOH^\bullet(X) \to \HOH^\bullet(\CA)$ in terms of the category $\CB$, especially in case when $\CB$
is generated by an exceptional collection.

This question is motivated by the investigation of so-called quasiphantom categories. A quasiphantom category $\CA$
is a semiorthogonal component of the bounded derived category of coherent sheaves on a smooth projective variety
which has trivial Hochschild homology. Recently, several examples of such categories with $X$ being a surface
of general type have been constructed:
when $X$ is the classical Godeaux surface a quasiphantom was constructed by B\"ohning--Graf von Bothmer--Sosna \cite{BBS},
when $X$ is a Burniat surface --- by Alexeev--Orlov \cite{AO},
when $X$ is the Beauville surface --- by Galkin--Shinder \cite{GS}, and
when $X$ is a determinantal Barlow surface --- by B\"ohning--Graf von Bothmer--Katzarkov--Sosna \cite{BBKS}.

In all these examples the quasiphantom is the orthogonal complement of an exceptional collection.

The structure of quasiphantom categories is very interesting but little understood as yet.
In particular, no direct way to compute their invariants such as Hochschild cohomology is known.
So, it is natural to do this using the information from the given semiorthogonal decomposition.
This leads to the question formulated in the first paragraph. 

The first result of the paper answers this question in the most general form. 
Let $\SCD$ be a smooth and proper DG-category and $\SCB \subset \SCD$ its DG-subcategory.
We define the normal Hochschild cohomology of $\SCB$ in $\SCD$ as the derived tensor product
of DG-bimodules
\begin{equation*}
\NHH^\bullet(\SCB,\SCD) = \SCB \lotimes_{\SCB^\opp\otimes\SCB} \SCD^\vee_\SCB,
\end{equation*}
where the first factor of the above tensor product is the diagonal $\SCB$-bimodule while in the second factor 
$\SCD^\vee = \RHom_{\BD(\SCD^\opp\otimes\SCD)}(\SCD,\SCD\otimes_\kk\SCD)$ is the dual of the diagonal $\SCD$-bimodule and $\SCD^\vee_\SCB$ is its restriction to~$\SCB$. 
We show that if $X$ is a smooth projective variety, $\SCD$ is a pretriangulated enhancement of $\BD^b(\coh(X))$, 
% the unbounded derived category of quasicoherent sheaves on $X$, 
and $\SCB \subset \SCD$ is the induced enhancement of the semiorthogonal component $\CB \subset \BD^b(\coh(X))$ then
there is a distinguished triangle
\begin{equation*}
\NHH^\bullet(\SCB,\SCD) \to \HOH^\bullet(X) \to \HOH^\bullet(\CA).
\end{equation*}
% where the first term is the Hochschild homology of an appropriate enhancement $\SCB$ 
% of the subcategory $\CB \subset \BD^b(\coh(X))$ with coefficients in the restriction
% to $\SCB$ of the dualizing bimodule $\SCD^\vee$ of $\BD^b(\coh(X))$. In other words,
% \begin{equation*}
% \HOH_\bullet(\SCB,\SCD^\vee_\SCB) = \SCB \lotimes_{\SCB^\opp\otimes\SCB} \SCD^\vee_\SCB.
% \end{equation*}
We define the {\em height} of the subcategory $\CB$ as the minimal integer $h$
such that $\NHH^h(\SCB,\SCD) \ne 0$. It follows immediately that for $t \le h-2$
the restriction morphism $\HOH^t(X) \to \HOH^t(\CA)$ is an isomorphism and for $t = h-1$
it is a monomorphism.

Of course, a reasonable way of computing the height (and the normal Hochschild cohomology) is required. 
In case when the subcategory $\CB$ is generated by an exceptional collection $E_1,\dots,E_n$, we construct 
a spectral sequence which computes $\NHH^\bullet(\SCB,\SCD)$ in terms of $\Ext$-groups $\Ext^\bullet(E_i,E_j)$ 
and $\Ext^\bullet(E_i,S^{-1}(E_j))$, where $S^{-1}(F) = F\otimes\omega_X^{-1}[-\dim X]$ is the inverse Serre functor. 
The differentials  in the spectral sequence are expressed in terms of the Yoneda product and higher multiplications
in the natural $A_\infty$ structure. 

Of course, usually it is not easy to control higher multiplications, so explicit computation 
of the height may be difficult. So, we define the {\em pseudoheight} of an exceptional collection
$E_1,\dots,E_n$ as the minimal integer $h$ such that the first page of the above spectral sequence
has a nontrivial term in degree $h$. By definition, the pseudoheight bounds the height from below,
and thus controls the restriction morphism of Hochschild cohomology as well. At the same time the computation
of the pseudoheight does not require any information on (higher) multiplications, and so it is easy manageable. 

We illustrate the computation of the height and of the pseudoheight by considering the quasiphantoms
in the classical Godeaux, Burniat and the Beauville surfaces. We show that in all cases the height
of the collections is $4$, while the pseudoheight varies from $4$ to $3$ depending on the particular case.
It follows that the restriction morphism $\HOH^t(X) \to \HOH^t(\CA)$ is an isomorphism for $t \le 2$
and a monomorphism for $t = 3$ in all these cases. We also deduce from this the fact that the formal
deformation spaces of all considered surfaces $X$ are isomorphic to the formal deformation spaces
of the quasiphantom subcategories.

Finally, we show that the height (and the pseudoheight) can be used to verify whether a given exceptional collection is full.
On one hand, if the height is strictly positive, one can easily deduce that the collection is not full.
On the other hand, we give a sufficient condition of fullness of an exceptional collection which uses
the spectral sequence computing the height of an exceptional collection and seems to be related
to quantum determinants considered by Bondal and Polishchuk.

% 
% We then show that if $\CA$ is the orthogonal complement 
% of this exceptional collection, the canonical restriction morphism of Hochschild cohomology $\HOH^k(X) \to \HOH^k(\CA)$
% is an isomorphism for $k \le \dim X + h - 2$ and a monomorphism for $k = \dim X + h - 1$. Note that when $h \ge 1 - \dim X$
% it follows that $\HOH^0(\CA) \ne 0$, hence $\CA \ne 0$. This gives a simple way to check that the orthogonal category
% is nontrivial which may prove useful for phantom categories.

% The height of the collection can be easily found. We show that the collections constructed by Alexeev--Orlov and
% Galkin--Shinder have height $2$ (hence an isomorphism of the second Hochschild cohomology), while that of
% B\"ohning--Graf von Bothmer--Sosna has height $1$ (hence an embedding of the second Hochschild cohomology).

{\bf Acknowledgement.} I would like to thank Dima Orlov for suggesting the problem and useful discussions,
Roman Mikhailov for a help with references, and Olaf Schn\"urer for corrections to the first version of the paper.
I am very grateful to Christian B\"ohning, Hans-Christian Graf von Bothmer and Pawel Sosna for the help
with computing the height of their exceptional collection, careful reading of the first draft of the paper,
and many useful comments.

\section{Preliminaries}

\subsection{DG-categories}

For a detailed survey of DG-categories we refer to~\cite{Ke} and references therein. 
Here we only sketch some basic definitions.

A {\sf DG-category} over a field $\kk$ is a category $\SCD$ such that for any pair of objects $\bx,\by \in \SCD$ 
the set of morphisms $\Hom_\SCD(\bx,\by)$ comes with a structure of a chain complex of $\kk$-vector spaces such that for each object $\bx$ 
the identity morphism $\id_\bx \in \Hom_\SCD(\bx,\bx)$ is closed of degree zero and the composition map 
$\Hom_\SCD(\bx,\by)\otimes\Hom_\SCD(\by,\bz) \to \Hom_\SCD(\bx,\bz)$ is a morphism of complexes (the Leibniz rule). 
The simplest example of a DG-category is the category $\dgm\kk$ of complexes
of $\kk$-vector spaces with
\begin{equation*}
\Hom_{\dgm\kk}(M^\bullet,N^\bullet)^t = \prod_{i \in \ZZ}\Hom(M^i,N^{i+t}),
\qquad
d^t(f^i) = (d^{i+t}_N\circ f^i - (-1)^t f^{i+1}\circ d^i_M).
\end{equation*}

The {\sf homotopy category} $[\SCD]$ of a DG-category $\SCD$ is defined
as the category which has the same objects as $\SCD$ and with
\begin{equation*}
\Hom_{[\SCD]}(\bx,\by) = H^0(\Hom_\SCD(\bx,\by)).
\end{equation*}
For example, the homotopy category $[\dgm\kk]$ is the category of complexes with morphisms being chain morphisms of complexes up to a homotopy.

A {\sf DG-functor} $F:\SCD \to \SCD'$ is a $\kk$-linear functor such that for any pair of objects $\bx,\by \in \SCD$ 
the morphism $F:\Hom_\SCD(\bx,\by) \to \Hom_{\SCD'}(F(\bx),F(\by))$ is a morphism of complexes. 
If $\SCD$ is a small DG-category (objects form a set) then DG-functors from $\SCD$ to $\SCD'$ also form a DG-category with
\begin{equation*}
\Hom(F,G) = \Ker\left( \prod_{\bx\in\SCD}\Hom_{\SCD'}(F(\bx),G(\bx)) \to \prod_{\bx,\by \in \SCD}\Hom(\Hom_\SCD(\bx,\by),\Hom_{\SCD'}(F(\by),G(\bx))) \right)
\end{equation*}

A {\sf right DG-module} over $\SCD$ is a DG-functor from $\SCD^\opp$, the opposite DG-category, to $\dgm\kk$. 
A {\sf left DG-module} over $\SCD$ is a DG-functor from $\SCD$ to $\dgm\kk$. The DG-category of right (resp.\ left)
DG-modules over $\SCD$ is denoted by $\dgm\SCD$ (resp.\ $\dgm\SCD^\opp$). The homotopy category $[\dgm\SCD]$ of DG-modules
has a natural triangulated structure. Moreover, it has arbitrary direct sums.

The {\sf Yoneda DG-functor} $h:\SCD \to \dgm\SCD$ is defined by 
\begin{equation*}
\bx \mapsto h^\bx(\by) = \Hom_\SCD(\by,\bx).
\end{equation*}
The Yoneda DG-functor for the opposite DG-category can be written as $\bx \mapsto h_\bx(\by) = \Hom_\SCD(\bx,\by)$.
The Yoneda DG-functors are full and faithful. Moreover, one has
\begin{equation*}
\Hom_{\dgm\SCD}(h^\bx,M) = M(\bx),\qquad
\Hom_{\dgm\SCD^\opp}(h_\bx,N) = N(\bx)
\end{equation*}
for any right $\SCD$-module $M$ and any left $\SCD$-module $N$.
The DG-modules in the images of the Yoneda functors are called {\sf representable}. 
The minimal triangulated subcategory of $[\dgm\SCD]$ containing all representable DG-modules and closed under direct summands
is called {\sf the category of perfect DG-modules over $\SCD$} and is denoted by~$\Perf(\SCD)$. Its objects are called 
{\sf perfect} DG-modules.

A DG-module $M$ is {\sf acyclic} if for each object $\bx\in\SCD$ the complex $M(\bx) \in \dgm\kk$ is acyclic. 
The DG-category of acyclic DG-modules is denoted by $\acy\SCD$. The {\sf derived category of a DG-category $\SCD$} 
is defined as the Verdier quotient
\begin{equation*}
\BD(\SCD) = [\dgm\SCD]/[\acy\SCD].
\end{equation*}
It has a natural triangulated structure. The quotient functor $[\dgm\SCD] \to \BD(\SCD)$ commutes with arbitrary
direct sums and its restriction onto the category of perfect DG-modules is fully faithful, $\Perf(\SCD) \subset \BD(\SCD)$.
In fact, the category of perfect DG-modules identifies with the subcategory $\BD(\SCD)^\comp$ of {\em compact objects} in $\BD(\SCD)$
(recall that an object $\bx$ of a category is {\sf compact} if the functor $\Hom(\bx,-)$ commutes with arbitrary direct sums).
A DG-category $\SCD$ is called {\sf pretriangulated} if $[\SCD] \subset \BD(\SCD)$ is a triangulated subcategory.
% If additionally $[\SCD]$ is closed under direct summands, we will say that $\SCD$ is {\sf strongly pretriangulated}.

Sometimes it is convenient to have a description of $\BD(\SCD)$ not using the Verdier quotient construction.
One way is to consider the subcategory of $\dgm\SCD$ consisting of all DG-modules $P$ such that for any
acyclic DG-module $A$ the complex $\Hom_{\dgm\SCD}(P,A)$ is acyclic. Such DG-modules $P$ are called
{\sf homotopically projective}, or simply {\sf h-projective}. The full subcategory of $\dgm\SCD$ consisting
of h-projective DG-modules is denoted by $\hproj\SCD$. Its homotopy category is equivalent to the derived 
category 
\begin{equation*}
[\hproj\SCD] \cong \BD(\SCD).
\end{equation*}
It is easy to see that any perfect DG-module is h-projective.

Let $\CT$ be a triangulated category. An enhancement for $\CT$ is a choice of a pretriangulated DG-category $\SCD$ and 
of an equivalence $\epsilon:[\SCD] \to \CT$ of triangulated categories.

% Let $\CT$ be a triangulated category with arbitrary direct sums. An {\sf enhancement} for $\CT$
% is a choice of a DG-category $\SCD$ and of an equvalence $\epsilon:\BD(\SCD) \to \CT$ of triangulated categories.
% An enhancement induces an equivalence $\epsilon:\Perf(\SCD) \cong  \CT^\comp$.

\subsection{DG-bimodules and tensor products}

If $\SCD_1$ and $\SCD_2$ are DG-categories over $\kk$, the {\sf tensor product} $\SCD_1\otimes_\kk\SCD_2$ is the DG-category
whose objects are pairs $(\bx_1,\bx_2)$ with $\bx_i$ being objects of $\SCD_i$, and with morphisms
defined by 
\begin{equation*}
\Hom_{\SCD_1\otimes\SCD_2}((\bx_1,\bx_2),(\by_1,\by_2)) = \Hom_{\SCD_1}(\bx_1,\by_1)\otimes_\kk \Hom_{\SCD_2}(\bx_2,\by_2).
\end{equation*}
A {\sf $\SCD_1$-$\SCD_2$ DG-bimodule} is a DG-module over $\SCD_1^\opp\otimes\SCD_2$. In other words,
a DG-bimodule $\varphi$ associates with any pair of objects $\bx_1 \in \SCD_1$, $\bx_2 \in \SCD_2$
a complex $\varphi(\bx_1,\bx_2)$ and a collection of morphisms of complexes
\begin{equation*}
\Hom_{\SCD_1}(\bx_1,\by_1) \otimes \varphi(\bx_1,\bx_2) \to \varphi(\by_1,\bx_2),\qquad
\varphi(\bx_1,\bx_2)\otimes \Hom_{\SCD_2}(\by_2,\bx_2) \to \varphi(\bx_1,\by_2)
\end{equation*}
for all $\by_1\in\SCD_1$, $\by_2\in\SCD_2$, which commute and are compatible with the composition laws
in $\SCD_1$ and $\SCD_2$. One of the most important examples of a DG-bimodule
is the {\sf diagonal $\SCD$-$\SCD$ DG-bimodule} $\SCD$ defined by
\begin{equation*}
\SCD(\bx_1,\bx_2) = \Hom_\SCD(\bx_2,\bx_1).
\end{equation*}
Other examples can be constructed as {\sf exterior products} of left $\SCD_1$-modules $M$ with right $\SCD_2$-modules $N$ 
\begin{equation*}
(M \otimes_\kk N)(\bx_1,\bx_2) = M(\bx_1)\otimes_\kk N(\bx_2).
\end{equation*}

Let $\varphi$ be a $\SCD_1$-$\SCD_2$ DG-bimodule and $\psi$ a $\SCD_2$-$\SCD_3$ DG-bimodule.
Their {\sf tensor product over $\SCD_2$} is a $\SCD_1$-$\SCD_3$ DG-bimodule defined by
\begin{multline*}
(\varphi\otimes_{\SCD_2}\psi)(\bx_1,\bx_3) := \\
\Coker \left(
\bigoplus_{\bx_2,\by_2\in\SCD_2} \varphi(\bx_1,\bx_2)\otimes_\kk \Hom_{\SCD_2}(\by_2,\bx_2) \otimes_\kk \psi(\by_2,\bx_3) \to
\bigoplus_{\bx_2\in\SCD_2} \varphi(\bx_1,\bx_2)\otimes_\kk \psi(\bx_2,\bx_3) 
\right).
\end{multline*}
A straightforward verification shows that 
\begin{equation*}
\varphi\otimes_{\SCD_2}\SCD_2 = \varphi
\qquad\text{and}\qquad
\SCD_2\otimes_{\SCD_2}\psi = \psi.
\end{equation*}
Of course, taking either $\SCD_1$ or $\SCD_3$ to be just the base field we obtain the tensor product
in the appropriate categories of DG-modules, like $-\otimes_{\SCD_2}-:\dgm\SCD_2 \times \dgm\SCD_2^\opp \to \dgm\kk$.
The same verification as above shows that
\begin{equation*}
\varphi\otimes_{\SCD_2}h_{\bx_2} = \varphi(-,\bx_2)
\qquad\text{and}\qquad
h^{\bx_1}\otimes_{\SCD_1}\varphi = \varphi(\bx_1,-)
\end{equation*}
for any $\SCD_1$-$\SCD_2$-bimodule $\varphi$ and any objects $\bx_1 \in \SCD_1$, $\bx_2 \in \SCD_2$.

The {\sf derived tensor product} is defined by replacing etiher of the factors by an h-projective resolution
\begin{equation*}
\varphi \lotimes_{\SCD_2} \psi := P(\varphi) \otimes_{\SCD_2} \psi \cong \varphi \otimes_{\SCD_2} P(\psi).
\end{equation*}
It is a bifunctor on derived categories  $\BD(\SCD_1^\opp\otimes\SCD_2) \times \BD(\SCD_2^\opp \otimes \SCD_3) \to \BD(\SCD_1^\opp \otimes \SCD_3)$.
Since representable modules are h-projective, we have 
\begin{equation*}
\varphi\lotimes_{\SCD_2}h_{\bx_2} = \varphi(-,\bx_2)
\qquad\text{and}\qquad
h^{\bx_1}\lotimes_{\SCD_1}\varphi = \varphi(\bx_1,-)
\end{equation*}
% Of course, taking either $\SCD_1$ or $\SCD_3$ to be just the base field we obtain the tensor product
% and the derived tensor product in the approprite categories of DG-modules, like $-\lotimes_{\SCD_2}-:\BD(\SCD_2^\opp)\times\BD(\SCD_2) \to \BD(\kk)$.
Also, there is a nice choice of an h-projective resolution for the diagonal bimodule, called the {\sf bar-resolution}.
It is defined by
\begin{equation}\label{bard}
\BB(\SCD) = \bigoplus_{p=0}^\infty \quad \bigoplus_{\bx_0,\dots,\bx_p \in \SCD} 
h_{\bx_p} \otimes_\kk \SCD(\bx_p,\bx_{p-1}) \otimes_\kk \dots \otimes_\kk \SCD(\bx_1,\bx_0) \otimes_\kk h^{\bx_0}[p]
\end{equation}
with the differential consisting of the differentials of $h_{\bx_p}$, $h^{\bx_0}$, and $\SCD(\bx_i,\bx_{i-1})$
and of the compositions $h_{\bx_p}\otimes\SCD(\bx_p,\bx_{p-1}) \to h_{\bx_{p-1}}$, 
$\SCD(\bx_{i+1},\bx_i) \otimes \SCD(\bx_i,\bx_{i-1}) \to \SCD(\bx_{i+1},\bx_{i-1})$
and $\SCD(\bx_1,\bx_0)\otimes h^{\bx_0} \to h^{\bx_1}$. Using this resolution it is easy
to see that 
\begin{equation*}
\varphi\lotimes_{\SCD_2}\SCD_2 \cong \varphi
\qquad\text{and}\qquad
\SCD_2\lotimes_{\SCD_2}\psi \cong \psi.
\end{equation*}

Given a DG-functor $F:\SCD_1 \to \SCD_2$ we define the restriction and the derived induction functors by
\begin{equation*}
\begin{array}{rll}
\Res_F:&\BD(\SCD_2) \to \BD(\SCD_1), \qquad & \Res_F(M)(\bx) = M(F(\bx)),\\
\LInd_F:&\BD(\SCD_1) \to  \BD(\SCD_2), & \LInd_F(N) = N\lotimes_{\SCD_1}{}_F\SCD_2,
\end{array}
\end{equation*}
where ${}_F\SCD_2(\bx,\by) = \SCD_2(F(\bx),\by)$ is the restriction of the diagonal bimodule $\SCD_2$ via the first argument.
Both functors are triangulated and commute with arbitrary direct sums. The derived induction functor takes representable
DG-modules to representable DG-modules. The derived induction functor is left adjoint to the restriction functor. 
Moreover, if $F$ is a quasiequivalence then $\LInd_F$ and $\Res_F$ are mutually inverse equivalences of the derived categories.

% Analogously, given a bimodule $\mu \in \BD(\SCD_1^\opp\otimes\SCD_2)$ we define a functor 
% \begin{equation*}
% L\mu:\BD(\SCD_1) \to \BD(\SCD_2),\qquad
% M \mapsto M\lotimes_{\SCD_1}\mu.
% \end{equation*}
% % It is a triangulated functor commuting with arbitrary direct sums. Moreover, 
% If for any $\bx \in \SCD_1$ the DG-module
% $\mu(\bx,-) \in \dgm\SCD_2$ is quasiisomorphic to a representable DG-module then the functor $L\mu$ takes $[\SCD_1]$ to $[\SCD_2]$.
% Such bimodules are well-known under the name of {\sf quasifunctors}.

\subsection{Cosimplicial machinery}

In the next subsection we will construct a DG-enhancement of the derived category of coherent sheaves
on a smooth projective scheme $X$, which is based on the \v{C}ech complex. Since the \v{C}ech complex
is coismplicial by its nature, we will need some facts about the category of cosimplicial complexes
which we recall here. The simplicial versions of these results are very well known and can be found
in many textbooks, see e.g.~\cite{We}. For the cosimplicial version we refer to the Appendix in~\cite{GM}.

% There are several ways to construct an enhancement for the derived category of coherent sheaves on a given smooth scheme $X$.
% It turns out that a bit easier is to use a cosimplicial enhancement and then induce a DG-enhancement from that.
% We refer to the Appendix of~\cite{GM} for more details on normalization complexes and Alexander--Whitney and Eilenberg--Zilber
% maps for comsimplicial objects. Here we just give the required definitions.

For any category $\SCA$ a {\sf cosimplicial object in $\SCA$} is just a functor $\Delta \to \SCA$, where $\Delta$
is the category of finite nonempty linearly ordered sets. We denote by $[p] := \{0,1,\dots,p\} \in \Delta$, $p \ge 0$, 
(with the natural order) the objects of $\Delta$. Then a cosimplicial object $A:\Delta \to \SCA$ is given by
a collection of objects $A^p := A([p]) \in \SCA$ and a collection of morphisms $f^*:A^p \to A^q$ for any nondecreasing 
map $f:[p] \to [q]$. The unique strictly increasing map $[p-1] \to [p]$ with the set of values $[p] \setminus \{i\}$
(as well as the induced morphism $A^{p-1} \to A^p$) is called the $i$-th {\sf face map} and is denoted by $\partial^i_p$.
Analogously, the unique surjective nondecreasing map $[p+1] \to [p]$ which takes value $i$ twice is called
the {\sf $i$-th degeneration map} and is denoted by $s^i_p$.

Assume $\SCA$ is abelian. Given a cosimplicial object $A^\bullet$ in $\SCA$ one defines the corresponding 
{\sf normalized complex} by
\begin{equation*}
(NA)^p = 
% \Coker\left(\oplus_{i=0}^{p-1} A^{p-1} \xrightarrow{\ \sum_{i=0}^{p-1} \partial_i^p\ } A^p\right),
\bigcap_{i=0}^{p-1} \Ker\left(A^{p} \xrightarrow{\ s^i_{p-1}\ } A^{p-1}\right),
\qquad
% d_{NA}^p = \partial_p^p.
d_{NA}^p = \sum_{i=0}^{p+1}(-1)^i\partial^i_{p+1}.
\end{equation*}

Assume now that $\SCA$ is a monoidal category. Then the functor $N$ is a lax monoidal functor.
It means that for any cosimplicial objects $A^\bullet,B^\bullet$ in $\SCA$ there is a morphism of functors
\begin{equation*}
\Delta_{A,B}:N(A) \otimes N(B) \to  N(A\otimes B)
\end{equation*}
called the {\sf Alexander--Whitney map} and defined by
\begin{equation*}
\Delta_{A,B}(a\otimes b) = (\partial_{p+q}^{p+q}\circ\dots\circ\partial_{p+1}^{p+1})(a) \otimes  (\partial^{0}_{p+q}\circ\dots\circ\partial^{0}_{q+1})(b),
\qquad
a \in A^p,\ b \in B^q,
\end{equation*}
as well as a morphism of functors
\begin{equation*}
\nabla_{A,B}:N(A\otimes B) \to N(A) \otimes N(B),
\end{equation*}
called the {\sf Eilenberg--Zilber map} and defined by
\begin{equation*}
\nabla_{A,B}(a\otimes b) = \sum_{p+q=n} \sum_{\tau \in \Shuf(p,q)} (-1)^{|\tau|} 
(s_p^{\tau_{p+1}-1}\circ\dots\circ s^{\tau_{p+q}-1}_{p+q-1})(a)\otimes 
(s_1^{\tau_{1}-1}\circ\dots\circ s^{\tau_{p}-1}_{p+q-1})(b),
\quad
a \in A^n,\ b \in B^n.
\end{equation*}
Here $\Shuf(p,q)$ is the set of all $(p+q)$ permutations $\tau$ such that $\tau_1<\dots<\tau_p$ and $\tau_{p+1} < \dots < \tau_{p+q}$
(such permutations are called {\sf $(p,q)$-shuffles}), and $|\tau|$ is the parity of the permutation $\tau$.
One can check that
\begin{equation*}
\nabla_{A,B}\circ \Delta_{A,B} = \id,
\qquad
\Delta_{A,B}\circ \nabla_{A,B} \sim \id,
\end{equation*}
where $\sim$ means homotopic.
In particular, both maps are quasiisomorphisms.

Finally, we will need the following construction, called the {\sf edgewise subdivision} operation, 
which is well known to experts, but we have not found a good reference for it. It looks like it was 
introduced by Segal~\cite{Se} in the context of simplicial spaces, but Segal attributes it to Quillen. 
Below is a (slightly modified) cosimplicial version of this construction.

Consider the endofunctor $\sd:\Delta \to \Delta$ which takes an object $[p]$ to the object $[2p+1]$
and a morphism $f:[p] \to [q]$ to the morphism $\sd(f):[2p+1] \to [2q+1]$ defined by
\begin{equation*}
\sd(f)(i) = 
\begin{cases}
f(i), & \text{if $0 \le i \le p$},\\
q+1+f(i-p-1),& \text{if $p+1 \le i \le 2p+1$}
\end{cases}
\end{equation*}
In other words, the functor $\sd$ takes a linearly ordered set into the union of two such sets (ordered so that 
the second copy goes after the first), and a map $f$ to the map coinciding with $f$ on each copy.
This is precisely written in the above formula. 

The functor $\sd$ comes with a morphism of functors $\sigma:\id_\Delta \to \sd$, which maps 
$[p]$ to $[2p+1]$ by the isomorphism onto the first copy of $[p]$, i.e.\ $\sigma(i) = i$.
Commutation of $\sigma$ with the faces and degenerations is evident. 

The subdivision functor $\sd$ acts on cosimplicial objects by taking $A^\bullet:\Delta \to \SCA$
to $A^\bullet\circ\sigma:\Delta \to \SCA$ and thus induces an endofunctor on the category
of cosimplicial objects which we also denote by $\sd$. Explicitly
\begin{equation*}
\sd(A^\bullet) = A^{2\bullet +1},
\qquad
\sd(\partial^i_p) = \partial_{2p+1}^{i+p+1}\circ\partial_{2p}^{i},\quad 
\sd(s_p^i) = s_{2p+1}^{i+p+1}\circ s_{2p+2}^i.
\end{equation*}
It is clear that $\sigma$ induces a morphism of cosimplicial objects $A^\bullet \to \sd(A^\bullet)$
and hence a morphism of their normalizations
\begin{equation*}
N(A) \xrightarrow[\cong]{\ \sigma\ } N(\sd(A)).
\end{equation*}
It is known that this map $\sigma$ is a quasiisomorphism of complexes (Segal proves in~\cite[Prop.~A.1]{Se} that
it induces an isomorphism of geometric realizations which is a stronger statement).

% \begin{theorem}
% For any monoidal abelain category $\SCA$ the functor $N$ is a lax monoidal equivalence of categories 
% between the monoidal category of cosimplicial objects in $\SCA$ and the monoidal category of cochain nonnegatively
% graded complexes in $\SCA$.
% \end{theorem}

\subsection{Enhancements for derived categories of coherent sheaves}

Let $X$ be a smooth projective variety. Denote by $\BD^b(\coh(X))$ the derived category of coherent sheaves on $X$.
In this section we construct an enhancement for it.

First we construct a cosimplicial enhancement.
Choose a finite affine covering $\{\CU_\alpha\}_{\alpha \in A}$ for~$X$ with $A$ being the index set.
It gives a simplicial object $\FU_\bullet$ in the category $\Open(X)$ of open coverings of $X$ defined by
\begin{equation*}
\FU_p = \bigsqcup_{\alpha:[p] \to A} \CU_{\alpha_0}\cap \dots \cap\CU_{\alpha_p},
\end{equation*}
the union is over the set of all maps from $[p]$ to $A$ with no conditions on the ordering (in fact we do not 
have an order on $A$ so such a condition is impossible to write down).
Each complex of quasicoherent sheaves $G$ on $X$ gives a contravariant functor from the category
of open coverings $\Open(X)$ to the category of complexes of vector spaces. Applying it to the simplicial
covering $\FU_\bullet$ we obtain a cosimplicial complex of vector spaces 
% 
% 
% For each complex $G$ of quasicoherent sheaves on $X$ define a cosimplicial object $\BC^\bullet(X,G)$ in the category 
% of complexes of vector spaces by
\begin{equation}\label{bc}
\BC^p(X,G) := G(\FU_\bullet) = \bigoplus_{(\alpha_0,\dots,\alpha_p) \in A^{p+1}}
\Gamma(\CU_{\alpha_0}\cap \dots \cap\CU_{\alpha_p},G)
\end{equation}
% (we allow repetitions of indices $\alpha_s$) with the differential induced by the differential of the complex $G$ 
% and with evident face and degeneration maps. 
The totalization (via direct sums) of its normalization 
$N\BC^\bullet(X,G)$ is the \v{C}ech complex of~$G$. In particular, the cohomology
of $N\BC^\bullet(X,G)$ are isomorphic to the hypercohomology of $G$.

Let $\SCC = \SCC_X$ be the cosimplicial category with objects being finite complexes of vector bundles on $X$ and with
% $\Hom_\SCC(F,G)$ being the cosimplicial complex of the \v{C}ech chains of the complex $F^\vee\otimes G$.
% Explicitly
\begin{equation}\label{cfg}
\Hom^\bullet_{\SCC}(F,G) =  \BC^\bullet(X,F^\vee\otimes G).
% \bigoplus_{i,j \in \ZZ} \quad \bigoplus_{(\alpha_0,\dots,\alpha_p) \in A^{p+1}}
% \Gamma(\CU_{\alpha_0}\cap \dots \cap\CU_{\alpha_p},(F^i)^\vee \otimes G^j)[i-j]
\end{equation}
It is easy to see that $\SCC_X$ is a cosimplicial category.
We call it {\sf \v{C}ech cosimplicial category of $X$}.

On the other hand, for any cosimplicial category $\SCC$ one can define a DG-category $N\SCC$,
{\sf the normalization of $\SCC$}, with the same objects as in $\SCC$ and with
\begin{equation*}
\Hom_{N\SCC}(\bx,\by) = N(\Hom^\bullet_{\SCC}(\bx,\by)).
\end{equation*}
The composition law in $N\SCC$ is defined via the Alexander--Whitney map
\begin{multline*}
\Hom_{N\SCC}(\bx,\by)\otimes \Hom_{N\SCC}(\by,\bz) =
N(\Hom_{\SCC}(\bx,\by)) \otimes N(\Hom_{\SCC}(\bx,\by)) \xrightarrow{\ \Delta\ }
\\  \xrightarrow{\ \Delta\ }
N(\Hom_{\SCC}(\bx,\by) \otimes \Hom_{\SCC}(\bx,\by)) \xrightarrow{\ m_\SCC\ }
% \\ \to
N(\Hom_{\SCC}(\bx,\bx)) =
\Hom_{N\SCC}(\bx,\bz),
\end{multline*}
where $m_\SCC$ is the composition law in $\SCC$. It is an exercise to check that $N\SCC$ is then a DG-category.

We define a DG-category $\SCD_X$ as the normalization of the \v{C}ech cosimplicial category
\begin{equation*}
\SCD_X := N\SCC_X.
\end{equation*}
It will be referred to as {\sf \v{C}ech DG-category of $X$}.

%  by taking the same objects as in $\SCC_X$ and with $\Hom$-complexes being normalized 
% complexes of cosimplicial $\Hom$-sets in $\SCC_X$. In other words
% \begin{equation}\label{dfg}
% \Hom_\SCD(F,G) = N\Hom_\SCC(F,G).
% \end{equation}
% To define the composition law we use the Alexander--Whitney map and the composition law in $\SCC_X$:
% \begin{multline*}
% \Hom_\SCD(F,G) \otimes \Hom_\SCD(G,H) =
% N\Hom_\SCC(F,G) \otimes N\Hom_\SCC(G,H) \xrightarrow{\ \Delta\ }
% \\
% \xrightarrow{\ \Delta\ }
% N(\Hom_\SCC(F,G) \otimes \Hom_\SCC(G,H)) \xrightarrow{\ N(m_\SCC)\ }
% N\Hom_\SCC(F,G) =
% \Hom_\SCD(F,G).
% \end{multline*}
% It is associative because both the Alexander--Whitney map and the composition law in $\SCC$ are (and by functoriality of $\Delta$).

It is clear that $[\SCD_X] \cong \BD^b(\coh(X))$. Indeed, the categories have the same objects and the spaces 
of morphisms are also the same (because we can compute the cohomology by \v{C}ech complex). We denote the equivalence 
by $\epsilon:[\SCD_X] \to \BD^b(\coh(X))$. Thus the DG-category $\SCD_X$ provides an enhancement of $\BD^b(\coh(X))$.
We will call it {\sf the \v{C}ech enhancement with respect to the covering $\{\CU_\alpha\}$}. 

Note that equivalence $\epsilon$ extends to an equivalence $\BD(\SCD_X) \cong \BD(X)$ between the derived category
of $\SCD_X$ and the unbounded derived category of quasicoherent sheaves on $X$.
% Note that $[\SCD_X] \cong \BD^b(\coh(X))$, so the \v{C}ech enhancement is strongly pretriangulated.
% 
% Moreover, it is easy to see that
% $\BD(\SCD_X) \cong \BD(X)$. 
Indeed, for each complex of quasicoherent sheaves $G$ on $X$ we define a DG-module $\bar\epsilon(G)$ over $\SCD_X$ by
\begin{equation*}
\bar\epsilon(G)(F) := N\BC^\bullet(X,F^\vee\otimes G),
\qquad F \in \SCD_X,
\end{equation*}
It is easy to see that this is a DG-functor $\bar\epsilon:\com(\Qcoh(X)) \to \dgm\SCD_X$ which takes acyclic complexes
to acyclic DG-modules and so inducing a functor $\bar\epsilon:\BD(X) \to \BD(\SCD_X)$.
% where $\Hom_\SCC(F,G)$ is defined by the RHS of~\eqref{cfg}.
%  such that $M_G(F)$ is the RHS of~\eqref{dfg} and~\eqref{cfg}
It is also easy to check that $\bar\epsilon$ is an equivalence such that the diagram
\begin{equation*}
\xymatrix{
[\SCD_X] \ar[d]_h \ar[r]^-\epsilon & \BD^b(\coh(X)) \ar@{^{(}->}[d] \\
\BD(\SCD_X) & \BD(X) \ar[l]_-{\bar\epsilon}
}
\end{equation*}
commutes, where the left vertical arrow is the Yoneda embedding and the right vertical arrow is the canonical embedding.

% For example, if $X$ is a smooth projective variety one can choose an object $G \in \BD^b(\coh(X))$ such that
% the minimal triangulated subcategory of $\BD(X)$ containing $G$ and closed under direct summands coincides
% with $\BD^b(\coh(X))$ (such object $G$ is called a {\sf generator}). Since $X$ is smooth an projective
% one can represent $G$ by a finite complex of vector bundles. Then we can consider the \v{C}ech complex 
% computing the hypercohomology of the complex $G^\vee\otimes G$. It will be a DG-algebra and one can check that
% the corresponding DG-category provides an enhancement of $\BD(X)$.
% 
% On the other hand, sometimes it is much more convenient to have an enhancement which is pretriangulated,
% or even strongly pretriangulated. This can be constructed for example as follows.  Given an enhancement 
% $\SCD$ of $\BD(X)$ one can take $\bar\SCD$ to be the subcategory of $\dgm\SCD$ consisting of h-projective 
% resolutions of all objects of $\BD^b(\coh(X))$. Then $\BD(\bar\SCD) \cong \BD(\SCD) \cong \BD(X)$, 
% and the Yoneda functor gives an equivalence $[\bar\SCD] \cong \Perf(\SCD) \cong \BD^b(\coh(X))$.
% Thus $\bar\SCD$ is a strongly pretriangulated enhancement for $\BD(X)$.

One nice property of the \v{C}ech enhancement is that 
\begin{equation*}
\Hom_{\SCD_X}(F^\vee,G^\vee) = \Hom_{\SCD_X}(G,F).
\end{equation*}
This means that we have in fact two enhancements
% $\SCD^\opp$ that we have two compatible enhancements
\begin{equation*}
\epsilon:[\SCD_X] \xrightarrow{\ \cong\ } \BD^b(\coh(X))
\qquad\text{and}\qquad
\epsilon^\vee:[\SCD_X^\opp] \xrightarrow{\ \cong\ } \BD^b(\coh(X))
\qquad\text{with}\qquad
\epsilon^\vee(\bx) = \epsilon(\bx)^\vee.
% [\SCD] \xrightarrow[\cong]{\ \epsilon\ } \BD^b(\coh(X)) \xleftarrow[\cong]{\ \phi\ } [\SCD^\opp]
\end{equation*}
% such that the identity antiautoequivalence $\SCD$ and $\SCD^\opp$ gives the dualization
% antiautoequivalence of $\BD^b(\coh(X))$. 
Moreover, for any perfect left $\SCD_X$-module $M$ and any perfect right $\SCD_X$-module $N$ we have
\begin{equation}\label{mdn}
M\lotimes_{\SCD_X} N \cong \HH^\bullet(X,\epsilon(M)^\vee\lotimes\epsilon(N)),
\end{equation}
the tensor product in the RHS is over $\CO_X$.
Indeed, $\SCD_X$ is pretriangulated and $[\SCD(X)] = \BD^b(\coh(X))$ is Karoubian closed, hence all perfect
DG-modules over $\SCD_X$ are representable, so we may assume that $M =h_\bx$ and $N = h^\by$ are both representable.
the LHS gives $\HH^\bullet(\Hom_{\SCD_X}(\bx,\by))$ which can be rewritten as
$\Ext^\bullet(\epsilon(\bx),\epsilon(\by)) \cong 
\HH^\bullet(X,\epsilon(\bx)^\vee\lotimes\epsilon(\by))$
% \cong \HH^\bullet(X,\phi(\bx)\lotimes\epsilon(\by))$
and this coincides with the RHS. 
% For arbitrary DG-modules the isomorphism follows by devissage.

Now consider the square $X\times X$ and its open affine covering $\FU^2_\bullet$ by sets $\{\CU_\alpha \times \CU_\beta\}$.
It is easy to see that for any exterior product of bounded complexes $G_1\boxtimes G_2$ of coherent sheaves
on the factors we have
\begin{equation*}
\BC^\bullet(X\times X,G_1\boxtimes G_2) = \BC^\bullet(X,G_1) \otimes \BC^\bullet(X,G_2),
\end{equation*}
the tensor product of cosimplicial complexes.

It follows that for any bounded complexes of vector bundles $F_1,F_2,G_1,G_2$ on $X$ we have
%  $\BD(X\times X) \cong \BD(\bar\SCD)$.
% This gives us a DG-category $\bar\SCD$ and an equivalence $[\bar\SCD] \cong \BD^b(\coh(X\times X))$.
% On the other hand, 
% It is easy to see that for any $F_1,F_2,G_1,G_2 \in \BD^b(\coh(X))$ we have
\begin{multline*}
\Hom_{\SCC_{X\times X}}(F_2\boxtimes F_1^\vee,G_2\boxtimes G_1^\vee) = 
\BC^\bullet(X\times X,(F_2^\vee\otimes G_2) \boxtimes (F_1\otimes G_1^\vee)) = \\ =
\BC^\bullet(X,F_2^\vee\otimes G_2) \otimes \BC^\bullet(X,F_1\otimes G_1^\vee) = 
% % \bigoplus_{p=0}^\infty\
% % \bigoplus_{i_1,i_2,j_1,j_2}\
% % \bigoplus_{\alpha_0,\dots,\alpha_p,\beta_0,\dots,\beta_p}
% = \bigoplus
% \Gamma((\CU_{\alpha_0}\times\CU_{\beta_0})\cap\dots\cap(\CU_{\alpha_p}\times\CU_{\beta_p}),
% (F_2^{i_2}\boxtimes (F_1^{i_1})^\vee)^\vee \otimes (G_2^{j_2}\boxtimes (G_1^{j_1})^\vee))
% % [i_1+i_2-j_1-j_2-p] 
% = \\ =
% \bigoplus
% \Gamma(\CU_{\beta_0}\cap\dots\cap\CU_{\beta_p},F_1^{i_1} \otimes (G_1^{j_1})^\vee) \otimes
% \Gamma(\CU_{\alpha_0}\cap\dots\cap\CU_{\alpha_p},(F_2^{i_2})^\vee \otimes G_2^{j_2}) 
% % [i_1+i_2-j_1-j_2-p] 
% = \\ =
\Hom_{\SCC_X^\opp}(F_1,G_1) \otimes \Hom_{\SCC_X}(F_2,G_2).
\end{multline*}
where $\SCC_{X\times X}$ is the \v{C}ech cosimplicial category of $X$.
% and the corresponding \v{C}ech cosimplicial and DG-categories $\SCC_{X\times X}$ and $\SCD_{X\times X} = N\SCC_{X\times X}$
% respectively.
Moreover, this isomorphism is compatible with the composition laws.
In other words, it gives a fully faithful cosimplicial functor
\begin{equation}\label{vka}
\kappa:\SCC_X^\opp\otimes\SCC_X \to \SCC_{X\times X},\qquad
(F_1,F_2) \mapsto F_2\boxtimes F_1^\vee,
\end{equation}
which we call the {\sf K\"unneth functor}. 
Combining with the normalization we get a fully faithful DG-functor
\begin{equation*}
\kappa:N(\SCC_X^\opp\otimes\SCC_X) \to N\SCC_{X\times X} = \SCD_{X\times X}.
\end{equation*}
On the other hand, the Eilenberg--Zilber map gives a DG-functor
\begin{equation*}
N(\SCC_X^\opp\otimes\SCC_X) \xrightarrow{\ \nabla\ } N\SCC_X^\opp \otimes N\SCC_X = \SCD_X^\opp \otimes \SCD_X
\end{equation*}
(this also can be easily varified), which is a quasiequivalence. Thus we have a diagram
\begin{equation*}
\xymatrix{
& N(\SCC_X^\opp\otimes\SCC_X) \ar[dl]_\nabla \ar[dr]^{\kappa} \\
\SCD_X^\opp \otimes \SCD_X  && \SCD_{X\times X}
}
\end{equation*}
This allows to define a functor $\BD(\SCD_X^\opp\otimes\SCD_X) \to \BD(\SCD_{X\times X})$ by composing the restriction 
with respect to $\nabla$ with the derived induction along $\kappa$. Composing it further with the equivalence quasiinverse to
$\bar\epsilon_{X\times X}:\BD(X\times X) \cong \BD(\SCD_{X\times X})$ we obtain a functor 
$\mu:\BD((\SCD_X^\opp\otimes\SCD_X) \to \BD(X\times X)$ such that the diagram
\begin{equation}\label{dmu}
\vcenter{\xymatrix{
\BD(\SCD_X^\opp\otimes\SCD_X) \ar[rr]^-{\Res_\nabla} \ar[d]_\mu && \BD(N(\SCC_X^\opp\otimes\SCC_X)) \ar[d]^{\LInd_\kappa} \\
\BD(X\times X) \ar[rr]^-{\bar\epsilon_{X\times X}} && \BD(\SCD_{X\times X})
}}
\end{equation}
commutes.
% 
% 
% This allows to consider the diagonal bimodule over $\SCD_X^\opp\otimes\SCD_X$ as a right DG-module over $N(\SCC_X^\opp\otimes\SCC_X)$
% and the diagonal bimodule over $\SCD_{X\times X}$ as a left DG-module over $N(\SCC_X^\opp\otimes\SCC_X)$, and hence to consider
% their derived tensor product. Clearly, thus we define a bimodule
% \begin{equation}
% \mu := \left(\SCD_X^\opp\otimes\SCD_X\right) \mathop{\otimes}^\LL_{N(\SCC_X^\opp\otimes\SCC_X)} \SCD_{X\times X} \quad\in \BD((\SCD_X^\opp\otimes\SCD_X)^\opp\otimes\SCD_{X\times X}).
% \end{equation} 
% The functor $\mu:\BD(\SCD_X^\opp\otimes\SCD_X) \to \BD(\SCD_{X\times X})$ is isomorphic to the composition of the restriction
% functor $\Res_\nabla:\BD(\SCD_X^\opp\otimes\SCD_X) \to \BD(N(\SCC_X^\opp\otimes\SCC_X))$ with respect to $\nabla$
% and of the derived induction functor $\LInd_{\kappa}:\BD(N(\SCC_X^\opp\otimes\SCC_X)) \to \BD(\SCD_{X\times X})$
% with respect to $\kappa$.

Note that since the DG-functor $\nabla$ is a quasiequivalence, the functor $\Res_\nabla$ takes a representable bimodule $h_\bx\otimes h^\by$ 
to the representable DG-module $h^{(\bx,\by)}$ over $N(\SCC_X^\opp\otimes\SCC_X)$ (recall that the objects of $\SCD_X^\opp\otimes\SCD_X$
and $N(\SCC_X^\opp\otimes\SCC_X)$ are canonically identified), and the functor $\LInd_\kappa$ takes it further to the representable
DG-module $h^{\kappa(\bx,\by)}$ over $\SCD_{X\times X}$. Comparing with the definition~\eqref{vka} of $\kappa$ we conclude that
\begin{equation}\label{lmu}
\mu(h_\bx\otimes_\kk h^\by) = \epsilon(\by)\boxtimes\epsilon(\bx)^\vee.
\end{equation}
It follows that $\mu$ is an equivalence. Indeed, $\Res_\nabla$ is fully faithful since $\nabla$ is a quasiequivalence,
and $\LInd_\kappa$ is fully faithful since $\kappa$ is. So, since $\bar\epsilon_{X\times X}$ is an equivalence
we conclude that $\mu$ is fully faithful. On the other hand, $\mu$ commutes with arbitrary direct sums, and its
image contains all objects of the form $F_2\boxtimes F_1^\vee$, where $F_1,F_2 \in \BD^b(\coh(X))$ are arbitrary.
Such objects generate $\BD(X\times X)$ hence $\mu$ is essentially surjective.
Moreover, it follows also from~\eqref{lmu} that $\mu$ gives an equivalence
\begin{equation*}
\Perf(\SCD_X^\opp\otimes\SCD_X) \xrightarrow[\cong]{\ \mu\ } \BD^b(\coh(X\times X)).
\end{equation*}

% Note that enhancement $\mu$ is also compatible with the duality --- it is easy to check that for any $\SCD$-$\SCD$-bimodule $M$ 
% \begin{equation}\label{mud}
% \mu(M)^{T\vee} \cong \mu(\RHom_{\SCD^\opp\otimes\SCD}(M,\SCD\otimes_\kk\SCD)),
% \end{equation}
% where $T$ stands for the transposition (the pullback under the permutation of factors involution of $X\times X$).
% Indeed, if $M = h_\bx\otimes_\kk h^\by$ then the LHS is 
% $(\epsilon(\by)\boxtimes\epsilon(\bx)^\vee)^{T\vee} \cong
% \epsilon(\bx)\boxtimes\epsilon(\by)^\vee \cong \mu(h_\by\otimes_\kk h^\bx)$
% and it remains to note that $\RHom_{\SCD^\opp}(h_\bx,\SCD) \cong h^\bx$ and
% $\RHom_\SCD(h^\by,\SCD) \cong h_\by$. For arbitrary bimodules~\eqref{mud} follows by devissage.

Let us show that $\mu$ is compatible with tensor products. Explicitly, let us show that
\begin{equation}\label{mukk}
\varphi \lotimes_{\SCD_X^\opp\otimes\SCD_X} \varphi' \cong \HH^\bullet(X\times X,\mu(\varphi)\lotimes \mu(\varphi')^T)
\end{equation}
where $\varphi$ and $\varphi'$ are arbitrary DG-bimodules, and
$T$ stands for the transposition (the pullback under the permutation of factors involution of $X\times X$).
% Finally,
% denoting by $K \mapsto K^T$ the transposition of $K \in \BD(X\times X)$ (the pullback under the permutation 
% of factors involution of $X\times X$) we have 
% 
% 
% for all $\SCD_X$-$\SCD_X$-bimodules $\kappa$ and $\kappa'$.
Indeed, if $\varphi = h_\bx\otimes h^\by$ and $\varphi' = h_{\bx'}\otimes h^{\by'}$ then the formula is correct
\begin{multline*}
\HH^\bullet(X\times X,\mu(\varphi)\lotimes \mu(\varphi')^T) \cong 
\HH^\bullet(X\times X,(\epsilon(\by)\boxtimes\epsilon(\bx)^\vee) \lotimes (\epsilon(\bx')^\vee\boxtimes\epsilon(\by'))) \cong \\ \cong
\HH^\bullet(X,\epsilon(\by) \lotimes \epsilon(\bx')^\vee) \otimes \HH^\bullet(X,\epsilon(\bx)^\vee\lotimes\epsilon(\by')) \cong 
(h_{\bx'}\lotimes_\SCD h^\by) \otimes_\kk (h_\bx \lotimes_\SCD h^{\by'}) \cong 
\varphi \lotimes_{\SCD^\opp\otimes\SCD} \varphi'
\end{multline*}
(the first is~\eqref{lmu}, the second is the K\"unneth formula, the third is~\eqref{mdn}, and the fourth is clear).
For arbitrary DG-bimodules the statement follows by devissage. 

As a final preparation step we are going to look on how the pushforward morphism under the diagonal 
embedding $\Delta:X \to X\times X$ acts. Here the edgewise subdivision plays an important role. 
The first observation is that for any complex $G$ we have
\begin{equation}\label{cedi}
\BC^\bullet(X\times X,\Delta_*G) = \sd(\BC^\bullet(X,G)).
\end{equation}
To see this first note that the simplicial covering $\FU^2_\bullet$ of $X\times X$
restricted to the diagonal $X \subset X\times X$ equals $\sd(\FU_\bullet)$, the subdivision
of the initial covering of $X$. Indeed, the index set for the covering of $X\times X$ is $A\times A$
with $\CU_{\alpha',\alpha''} = \CU_{\alpha'}\times\CU_{\alpha''}$. Consequently
\begin{multline*}
\Delta^{-1}(\FU^2_p) =
\Delta^{-1}(\sqcup_{\alpha:[p] \to A\times A}\CU_{\alpha'_0,\alpha''_0}\cap \dots \cap \CU_{\alpha'_p,\alpha''_p}) = \\
\Delta^{-1}(\sqcup_{\alpha',\alpha'':[p] \to A}(\CU_{\alpha'_0}\times\CU_{\alpha''_0})\cap \dots \cap (\CU_{\alpha'_p}\times\CU_{\alpha''_p})) = 
\sqcup_{\alpha',\alpha'':[p] \to A}(\CU_{\alpha'_0}\cup\CU_{\alpha''_0})\cap \dots \cap (\CU_{\alpha'_p}\cup\CU_{\alpha''_p})) = \\
\sqcup_{\alpha',\alpha'':[p] \to A}(\CU_{\alpha'_0}\cup\dots\cup\CU_{\alpha'_p}) \cap (\CU_{\alpha''_0}\cup\dots\cup\CU_{\alpha''_p}) = \sd(\FU_\bullet)_p.
\end{multline*}
Since $\Gamma(U,\Delta_*G) = \Gamma(\Delta^{-1}(U),G)$ for any open subset $U$ of $X\times X$, we deduce~\eqref{cedi}.

Let us look at the image of the diagonal bimodule $\SCD_X$ under this equivalence.
Let $\Delta:X \to X\times X$ be the diagonal embedding.

\begin{lemma}\label{mud}
We have $\mu(\SCD_X) \cong \Delta_*\CO_X$.
\end{lemma}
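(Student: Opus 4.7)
The plan is to evaluate both $\mu(\SCD_X)$ and $\Delta_*\CO_X$ on the K\"unneth-type objects $\kappa(F_1,F_2) = F_2\boxtimes F_1^\vee$, which generate $\BD(X\times X)$, and to compare the two using the edgewise subdivision quasi-isomorphism $\sigma$.

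First I would restrict both $\bar\epsilon_{X\times X}(\Delta_*\CO_X)$ and the diagonal bimodule $\SCD_X$ (via $\nabla$) to the DG-category $N(\SCC_X^\opp\otimes\SCC_X)$. Since $\nabla$ is the identity on objects,
$$\Res_\nabla(\SCD_X)(F_1,F_2) = \Hom_{\SCD_X}(F_2,F_1) = N\BC^\bullet(X,F_2^\vee\otimes F_1).$$
For the other side, the projection formula gives $(F_2\boxtimes F_1^\vee)^\vee\otimes\Delta_*\CO_X \cong \Delta_*(F_2^\vee\otimes F_1)$, and combined with the cosimplicial identity~\eqref{cedi} this yields
$$\Res_\kappa\bar\epsilon_{X\times X}(\Delta_*\CO_X)(F_1,F_2) = N\sd\bigl(\BC^\bullet(X,F_2^\vee\otimes F_1)\bigr).$$

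Next, I would use the naturality of the subdivision map $\sigma:N(A)\to N(\sd(A))$ to promote this into a morphism of DG-bimodules
$$\sigma:\Res_\nabla(\SCD_X) \longrightarrow \Res_\kappa\bar\epsilon_{X\times X}(\Delta_*\CO_X)$$
over $N(\SCC_X^\opp\otimes\SCC_X)$, which is a quasi-isomorphism by Segal's theorem. Passing through the adjunction $\LInd_\kappa\dashv\Res_\kappa$, and using full faithfulness of $\kappa$ (which gives $\Res_\kappa\LInd_\kappa\cong\id$), produces a morphism
$$\LInd_\kappa\Res_\nabla(\SCD_X) \longrightarrow \bar\epsilon_{X\times X}(\Delta_*\CO_X)$$
in $\BD(\SCD_{X\times X})$ whose restriction along $\kappa$ recovers $\sigma$, and is hence a quasi-isomorphism on every $\kappa$-representable object.

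To conclude that this morphism is itself an isomorphism in $\BD(\SCD_{X\times X})$, observe that its cone vanishes after $\Res_\kappa$, i.e.\ is zero on every object $\kappa(F_1,F_2) = F_2\boxtimes F_1^\vee$. Under the equivalence $\bar\epsilon_{X\times X}$ these correspond to the standard external-product generators of $\BD(X\times X)$, so the representables $h^{\kappa(F_1,F_2)}$ generate $\BD(\SCD_{X\times X})$; since $\RHom(h^G,-)$ is evaluation at $G$, the cone must vanish. Applying $\bar\epsilon_{X\times X}^{-1}$ and invoking the commutative square~\eqref{dmu} then gives $\mu(\SCD_X)\cong\Delta_*\CO_X$. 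The main obstacle is the DG-compatibility claim in the second step: one must verify that the levelwise quasi-isomorphism $\sigma$ genuinely assembles into a morphism of DG-bimodules, equivalently that edgewise subdivision is compatible with the Alexander--Whitney composition used in the normalizations $N\SCC_X$ and $N\SCC_{X\times X}$. This reduces to a standard (but tedious) shuffle-calculus verification, which is precisely why the cosimplicial preliminaries were collected above.
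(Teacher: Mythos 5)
Your proof is correct and follows essentially the same route as the paper: both reduce the claim to the identification $\Res_\kappa(\bar\epsilon_{X\times X}(\Delta_*\CO_X)) \cong \Res_\nabla(\SCD_X)$ and establish it by the same chain (projection formula, the subdivision identity~\eqref{cedi}, and Segal's quasi-isomorphism $\sigma$). The paper simply invokes that $\LInd_\kappa$ is an equivalence with quasiinverse $\Res_\kappa$ where you run the adjunction-plus-generators argument, and it likewise leaves implicit the bimodule-compatibility of $\sigma$ that you flag at the end.
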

\begin{proof}
Since $\LInd_\kappa$ is an equivalence of categories and its quasiinverse is given by $\Res_\kappa$, 
it follows from~\eqref{dmu} that it is enough to check that
% \begin{equation*}
$\Res_\kappa(\bar\epsilon_{X\times X}(\Delta_*\CO_X)) \cong \Res_\nabla(\SCD_X)$.
% \end{equation*}
% By definition of $\bar\epsilon_{X\times X}$ and of $\kappa$ we have
This follows from the following chain of quasiisomorphisms of $N(\SCC_X^\opp\otimes\SCC_X)$-modules
% can be shown as follows
% 
% 
% We know that $\mu \cong \LInd_{\kappa}\circ \Res_\nabla$. Since $\kappa$ is a quasiequivalence,
% the corresponding derived induction functor $\LInd_{\kappa}$ is an equivalence of categories and its quasiinverse is the restriction
% functor $\Res_{\kappa}:\BD(\SCD_{X\times X}) \to \BD(N(\SCC_X^\opp\otimes\SCC_X))$. So, to prove Lemma
% it is enough to check that $\Res_{\kappa}(\Delta_*\CO_X) \cong \Res_\nabla(\SCD)$. This is straightforward, indeed
% % By definition of the later
% % we have
\begin{multline*}
\Res_\kappa(\bar\epsilon_{X\times X}(\Delta_*\CO_X))(\bx,\by) =
N\Hom_{\SCC_{X\times X}}(\kappa(\bx,\by),\Delta_*\CO_X) =
% N\Hom_{\SCC_{X\times X}}(\epsilon(\by) \boxtimes \epsilon(\bx)^\vee,\Delta_*\CO_X) =
\\ =
N\BC^\bullet(X\times X,(\epsilon(\by)^\vee \boxtimes \epsilon(\bx)) \otimes \Delta_*\CO_X) \cong  
N\BC^\bullet(X\times X,\Delta_*\Delta^*(\epsilon(\by)^\vee \boxtimes \epsilon(\bx))) \cong  
\\ \cong
N(\sd(\BC^\bullet(X,\Delta^*(\epsilon(\by)^\vee \boxtimes \epsilon(\bx)))) \cong 
N\BC^\bullet(X,\Delta^*(\epsilon(\by)^\vee \boxtimes \epsilon(\bx))) \cong  
\\ \cong
N\BC^\bullet(X,\epsilon(\by)^\vee \otimes \epsilon(\bx)) \cong  
N\Hom_{\SCC_X}(\by,\bx) \cong
\Res_\nabla(\SCD_X)(\bx,\by).
\end{multline*}
Here the first equality is the definition of the DG-functor $\bar\epsilon$ and of the functor $\Res$,
the second is the definition of the DG-functor $\kappa$ and of the category $\SCC_{X\times X}$,
the third is the projection formula, the fourth is~\eqref{cedi}, the fifth is the quasiisomorphism $\sigma$,
the sixth is evident, the seventh is the definition of $\SCC_X$, and the eighth is the definition of the diagonal
bimodule $\SCD_X$.
% 
% and we are done.
\end{proof}

On the other hand, consider the $\SCD_X$-bimodule $\SCD_X^\vee$ corresponding to the inverse Serre fuctor of $\BD(X)$
\begin{equation}\label{is}
S^{-1}(F) := F \otimes \omega_X^{-1}[-\dim X].
\end{equation} 
Let us show that this is the bimodule defined by
\begin{equation}\label{ddis}
\SCD_X^\vee(\bx,\by) = N\Hom_{\SCC_X}(\epsilon(\by),S^{-1}(\epsilon(\bx))).
\end{equation}

\begin{lemma}\label{mudd}
We have $\mu(\SCD_X^\vee) \cong \Delta_*\omega_X^{-1}[-\dim X]$.
\end{lemma}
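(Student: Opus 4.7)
The plan is to mimic the proof of Lemma \ref{mud} almost verbatim, threading the extra twist by $\omega_X^{-1}[-\dim X]$ through the same chain of identifications.

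First, since $\LInd_\kappa$ is an equivalence with quasiinverse $\Res_\kappa$ and diagram \eqref{dmu} commutes, the lemma reduces to producing a quasiisomorphism
$$\Res_\kappa\bigl(\bar\epsilon_{X\times X}(\Delta_*\omega_X^{-1}[-\dim X])\bigr) \;\cong\; \Res_\nabla(\SCD_X^\vee)$$
of DG-modules over $N(\SCC_X^\opp\otimes\SCC_X)$. I would then evaluate both sides at an arbitrary pair of objects $(\bx,\by)$ and construct a chain of natural quasiisomorphisms parallel to the one proved in Lemma \ref{mud}.

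Unfolding $\bar\epsilon_{X\times X}$ together with the definition of the K\"unneth functor $\kappa$ presents the left-hand side as
$$N\BC^\bullet\bigl(X\times X,(\epsilon(\by)^\vee\boxtimes\epsilon(\bx))\otimes\Delta_*\omega_X^{-1}[-\dim X]\bigr).$$
Next I would apply the projection formula to push everything onto the diagonal, invoke \eqref{cedi} to identify the resulting \v{C}ech complex on $X\times X$ with the edgewise subdivision of a \v{C}ech complex on $X$, use the subdivision quasiisomorphism $\sigma$ to drop $\sd$, and finally recognize $\Delta^*(\epsilon(\by)^\vee\boxtimes\epsilon(\bx))\otimes\omega_X^{-1}[-\dim X] = \epsilon(\by)^\vee\otimes S^{-1}(\epsilon(\bx))$ via \eqref{is}. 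The output is $N\BC^\bullet(X,\epsilon(\by)^\vee\otimes S^{-1}(\epsilon(\bx))) = N\Hom_{\SCC_X}(\epsilon(\by),S^{-1}(\epsilon(\bx)))$, which is precisely $\SCD_X^\vee(\bx,\by) = \Res_\nabla(\SCD_X^\vee)(\bx,\by)$ by the definition \eqref{ddis}.

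The only step requiring genuine care is invoking the projection formula inside the \v{C}ech enhancement, but since $\omega_X^{-1}$ is a line bundle no flat resolution is needed, so this step is essentially tautological. Apart from that, every line of Lemma \ref{mud}'s computation transports verbatim with $\CO_X$ replaced by the shifted line bundle $\omega_X^{-1}[-\dim X]$, and the verification amounts to a straightforward line-by-line adaptation of the preceding proof.
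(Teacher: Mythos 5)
Your proposal matches the paper's proof essentially step for step: the same reduction via $\Res_\kappa$ and diagram \eqref{dmu}, followed by the same chain of quasiisomorphisms (unfolding $\bar\epsilon_{X\times X}$ and $\kappa$, projection formula, \eqref{cedi}, the subdivision quasiisomorphism $\sigma$, and the identification with $\SCD_X^\vee$ via \eqref{ddis}). The paper likewise treats this as a verbatim adaptation of Lemma \ref{mud} with the extra twist by $\omega_X^{-1}[-\dim X]$, so your argument is correct and identical in approach.
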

\begin{proof}
Indeed, 
% By definition of $\mu$ the bimodule corresponding to $\Delta_*\omega_X^{-1}[-\dim X]$ evaluated on objects $\bx,\by \in \SCD$ is
\begin{multline*}
\Res_{\kappa}(\bar\epsilon_{X\times X}(\Delta_*\omega_X^{-1}[-\dim X]))(\bx,\by) =
N\Hom_{\SCC_{X\times X}}(\kappa(\bx,\by),\Delta_*\omega_X^{-1}[-\dim X]) = \\ =
% N\Hom_{\SCC_{X\times X}}(\epsilon(\by) \boxtimes \epsilon(\bx)^\vee,\Delta_*\omega_X^{-1}[-\dim X]) =
N\BC^\bullet(X\times X,(\epsilon(\by)^\vee \boxtimes \epsilon(\bx)) \otimes \Delta_*\omega_X^{-1}[-\dim X]) \cong  \\ \cong
N\BC^\bullet(X\times X,\Delta_*(\Delta^*(\epsilon(\by)^\vee \boxtimes \epsilon(\bx))\otimes \omega_X^{-1}[-\dim X])) \cong \\ \cong
N(\sd(\BC^\bullet(X,\Delta^*(\epsilon(\by)^\vee \boxtimes \epsilon(\bx))\otimes \omega_X^{-1}[-\dim X]))) \cong  
\\ \cong
N\BC^\bullet(X,\Delta^*(\epsilon(\by)^\vee \boxtimes \epsilon(\bx))\otimes \omega_X^{-1}[-\dim X]) \cong  
\\ \cong
N\BC^\bullet(X,\epsilon(\by)^\vee \otimes \epsilon(\bx)\otimes \omega_X^{-1}[-\dim X]) \cong  
N\Hom_{\SCC_X}(\epsilon(\by),S^{-1}(\epsilon(\bx))) \cong
\Res_\nabla(\SCD^\vee_X)(\bx,\by).
\end{multline*}
and the argument of Lemma~\ref{mud} completes the proof.
\end{proof}

\begin{remark}
In fact it is easy to check that
% Since $X$ is a smooth variety, $\SCD$ is a perfect bimodule. Therefore the bimodule
\begin{equation*}
\SCD_X^\vee \cong \RHom_{\BD(\SCD_X^\opp\otimes\SCD_X)}(\SCD_X,\SCD_X \otimes_\kk \SCD_X),
\end{equation*}
which justifies the notation.
% is also perfect. Our next goal is to identify $\mu(\SCD^\vee)$.
\end{remark}

% 
% Note that 
% \begin{multline}\label{ddis}
% \SCD^\vee(\bx,\by) = (h_\bx\otimes h^\by)\lotimes_{\SCD^\opp\otimes\SCD}\SCD^\vee \cong 
% \HH^\bullet(X\times X,(\epsilon(\by)\boxtimes\epsilon(\bx)^\vee) \lotimes \Delta_*\omega_X^{-1}[-\dim X]) \cong \\ 
% \HH^\bullet(X,\epsilon(\by)\lotimes\epsilon(\bx)^\vee \lotimes \omega_X^{-1}[-\dim X]) \cong 
% \Ext^\bullet(\epsilon(\bx),\epsilon(\by) \lotimes \omega_X^{-1}[-\dim X]) \cong \Hom_\SCD(\bx,S^{-1}(\by)),
% \end{multline}
% where 
% \begin{equation}\label{is}
% S^{-1}(\bx) := \bx \otimes \omega_X^{-1}[-\dim X] 
% \end{equation} 
% is the inverse Serre functor of $\SCD$.

\subsection{Semiorthogonal decompositions}

Let $\CT$ be a triangulated category and $\CA,\CB \subset \CT$ full triangulated subcategories.
One says that
\begin{equation*}
\CT = \langle \CA,\CB \rangle
\end{equation*}
is a {\sf semiorthogonal decomposition} if 
\begin{itemize}
\item for all $A \in \CA$, $B \in B$ one has $\Hom(B,A) = 0$, and
\item for any $T \in \CT$ there is a distinguished triangle 
% \begin{equation*}
$T_\CB \to T \to T_\CA$
% \end{equation*}
with $T_\CA \in \CA$ and $T_\CB \in \CB$.
\end{itemize}
Note that the above triangle is unique because of the semiorthogonality.

Now assume that we are given a semiorthogonal decomposition
\begin{equation*}
\BD^b(\coh(X)) = \langle \CA,\CB \rangle
\end{equation*}
with $X$ smooth and projective.
It was shown in~\cite{K-FBC} that there is a semiorthogonal decomposition
\begin{equation*}
\BD^b(\coh(X\times X)) = \langle \CA_X,\CB_X \rangle,
\end{equation*}
where the subcategories $\CA_X$ and $\CB_X$ are the minimal closed under direct summands
triangulated subcategories of $\BD^b(\coh(X\times X))$ containing all objects of the form 
$A\boxtimes F$ (resp.\ $B\boxtimes F$) with arbitrary $A \in \CA$, $B \in \CB$ and $F \in \BD^b(\coh(X))$.
Consequently, we can consider the induced decomposition of the structure sheaf of the diagonal
\begin{equation*}
Q \to \Delta_*\CO_X \to P
\end{equation*}
with $P \in \CA_X$, $Q \in \CB_X$. One can easily see that the Fourier--Mukai functors $\BD^b(\coh(X)) \to \BD^b(\coh(X))$
associated with the kernels $P$ and $Q$ take any object $F \in \BD^b(\coh(X))$ to its components $F_\CA$ and $F_\CB$
with respect to the initial semiorthogonal decomposition. Thus the above triangle can be considered as the universal 
semiorthogonal decomposition triangle.

It turns out that the kernel $Q$ has a nice interpretation in terms of the natural enhancements of the DG-categories
$\BD^b(\coh(X))$ and $\CB$. This interpretation is crucial for the rest of the paper.

% 
% \subsection{Interpretation of the kernel $Q$}
% 
% Now assume that we are given a semiorthogonal decomposition
% \begin{equation*}
% \BD^b(\coh(X)) = \langle \CA,\CB \rangle.
% \end{equation*}

Consider the \v{C}ech enhancement $\SCD = \SCD_X$ of $\BD^b(\coh(X))$.
Let $\SCA \subset \SCD$ and $\SCB \subset \SCD$ be the full DG-subcategories consisting of all objects
contained in $\CA \subset [\SCD]$ and $\CB \subset [\SCD]$ respectively.
Consider the tensor product $\SCD\lotimes_\SCB\SCD$.
%  of the diagonal $\SCD$-DG-bimodules over $\SCB$. 
Here the first factor is considered as a $\SCD$-$\SCB$-bimodule
and the second factor is considered as a $\SCB$-$\SCD$-bimodule by restricting the corresponding arguments 
of the diagonal $\SCD$-bimodule $\SCD$ to $\SCB$.
Let $\mu:\BD(\SCD^\opp\otimes\SCD) \to \BD(X\times X)$ be the equivalence discussed in the previous subsection.

\begin{proposition}\label{muq}
We have $Q \cong \mu(\SCD\lotimes_\SCB\SCD)$. 
\end{proposition}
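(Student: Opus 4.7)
The plan is to identify $\mu(\SCD \lotimes_\SCB \SCD)$ with $Q$ by invoking uniqueness of the decomposition triangle $Q \to \Delta_*\CO_X \to P$: I shall construct a natural morphism $\mu(\SCD \lotimes_\SCB \SCD) \to \Delta_*\CO_X$, and then verify separately that its source lies in $\CB_X$ and its cone lies in $\CA_X$.

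The morphism comes from composition in $\SCD$: the bimodule map $m\colon \SCD \lotimes_\SCB \SCD \to \SCD \lotimes_\SCD \SCD \cong \SCD$ given by $f \otimes g \mapsto f \circ g$ yields, via $\mu$ together with Lemma~\ref{mud}, the desired morphism $\mu(m)\colon \mu(\SCD \lotimes_\SCB \SCD) \to \Delta_*\CO_X$.

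To show $\mu(\SCD \lotimes_\SCB \SCD) \in \CB_X = {}^\perp\CA_X$, it suffices (after resolving any $F \in \BD^b(\coh(X))$ by a bounded complex of vector bundles) to prove $\RHom_{\BD(\SCD^\opp \otimes \SCD)}(\SCD \lotimes_\SCB \SCD, h_\bx \otimes h^\by) = 0$ for $\by \in \SCA$, since such pairs $(\bx,\by)$ account for all generators $A \boxtimes F = \mu(h_\bx \otimes h^\by)$ of $\CA_X$. I would compute this using the bar resolution~\eqref{bard} of the diagonal $\SCB$-bimodule tensored with $\SCD$ on both sides; the representable-tensor identities $\SCD \lotimes_\SCB h_\bb^\SCB = h_\bb^\SCD$ and $h^{\bb,\SCB} \lotimes_\SCB \SCD = h^{\bb,\SCD}$ turn each summand into a representable $\SCD$-bimodule $h_{\bb_p} \otimes h^{\bb_0}$ with $\bb_i \in \SCB$, and then Yoneda gives $\RHom(h_{\bb_p} \otimes h^{\bb_0}, h_\bx \otimes h^\by) \cong \Hom_\SCD(\bx,\bb_p) \otimes \Hom_\SCD(\bb_0,\by)$, whose second tensor factor vanishes by the semiorthogonality $\Hom_\SCD(\SCB,\SCA) = 0$, killing the whole $\RHom$ term by term.

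For the complementary condition $\Cone(\mu(m)) \in \CA_X = \CB_X^\perp$, I verify that $\Cone(m)(\bx_1,\bx_3) = 0$ whenever $\bx_3 \in \SCB$: the representable-tensor formula $\varphi \lotimes_\SCB h_{\bx_3} = \varphi(\bx_3)$ applied to $\varphi = \SCD(-,\bx_1)|_\SCB$ identifies $(\SCD \lotimes_\SCB \SCD)(\bx_1,\bx_3) \cong \Hom_\SCD(\bx_3,\bx_1) = \SCD(\bx_1,\bx_3)$, under which $m$ is manifestly the identity. Combining, the triangle $\mu(\SCD \lotimes_\SCB \SCD) \to \Delta_*\CO_X \to \mu(\Cone(m))$ satisfies the defining conditions of $Q \to \Delta_*\CO_X \to P$, so by uniqueness they coincide and $\mu(\SCD \lotimes_\SCB \SCD) \cong Q$. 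The main obstacle I expect is the careful bookkeeping of the left/right $\SCB$- and $\SCD$-module variances, so that the bar resolution and the Yoneda-type identifications are applied on the correct sides; once this is set up, both vanishings reduce to one-line invocations of semiorthogonality and of the representable-tensor formula, respectively.
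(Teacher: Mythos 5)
Your proposal is correct and follows essentially the same route as the paper: the same multiplication morphism $\SCD\lotimes_\SCB\SCD\to\SCD$, the same reduction to uniqueness of the decomposition triangle of $\Delta_*\CO_X$, the bar resolution of the diagonal $\SCB$-bimodule for one membership check, and the collapse of the derived tensor product against representable modules (plus Yoneda) for the other. The only harmless deviation is that you establish $\mu(\SCD\lotimes_\SCB\SCD)\in\CB_X$ by checking left-orthogonality to the generators of $\CA_X$, whereas the paper reads membership in $\CB_X$ directly off the bar resolution, whose terms are built from $\epsilon(\bx_0)\boxtimes\epsilon(\bx_p)^\vee$ with $\bx_0,\bx_p\in\SCB$; both arguments work.
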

\begin{proof}
Denote $Q' := \mu(\SCD\lotimes_\SCB\SCD) \in \BD(X\times X)$.
The composition law in $\SCD$ induces a morphism of $\SCD$-$\SCD$-bimodules 
\begin{equation}\label{dbdd}
\SCD\lotimes_\SCB\SCD \to \SCD
\end{equation} which under the functor $\mu$ gives a morphism $Q' \to \Delta_*\CO_X$ in $\BD(X\times X)$. We denote by $P'$
its cone, so that we have a distinguished triangle
\begin{equation*}
Q' \to \Delta_*\CO_X \to P'
\end{equation*}
in $\BD(X\times X)$. We need to show that $Q \cong Q'$. By definition of $Q$ and $P$ for this it is enough 
to check that $P' \in \CA_X$ and $Q' \in \CB_X$.

Note that $\SCD\lotimes_\SCB\SCD$ can be rewritten as $\SCD\lotimes_\SCB\SCB\lotimes_\SCB\SCD$.
To compute this derived tensor product we can use the bar-resolution of $\SCB$. We deduce that
\begin{equation}\label{bardbd}
\SCD\lotimes_\SCB\SCD \cong
\bigoplus_{p=0}^\infty \quad \bigoplus_{\bx_0,\dots,\bx_p \in \SCB} h_{\bx_p}\otimes\SCB(\bx_p,\bx_{p-1})\otimes\dots\otimes\SCB(\bx_1,\bx_0)\otimes h^{\bx_0}[p]
\end{equation}
where the factors $h_{\bx_p}$ and $h^{\bx_0}$ are considered as $\SCD$-modules.
Applying the functor $\mu$ we deduce that 
$Q' = \mu(\SCD\lotimes_\SCB\SCD)$ is contained in the subcategory generated by $\epsilon(\bx_0)\boxtimes\epsilon(\bx_p)^\vee$ with $\bx_0,\bx_p \in \SCB$.
In particular, $Q' \in \CB_X$.

On the other hand, for any $\bb \in \SCB$ and $\by \in \SCD$ we have
\begin{multline*}
\Hom_{\BD(\SCD^\opp\otimes\SCD)}(h_\by\otimes h^\bb,\SCD\lotimes_\SCB\SCD) \cong 
\SCD(\by,-)\lotimes_\SCB\SCD(-,\bb) \cong \\ \cong
\SCD(\by,-)\lotimes_\SCB h_\bb \cong
\SCD(\by,\bb) \cong 
\Hom_{\BD(\SCD^\opp\otimes\SCD)}(h_\by\otimes h^\bb,\SCD)
\end{multline*}
and the isomorphism is induced by the morphism~\eqref{dbdd}. Therefore the cone of that morphism
is orthogonal to all bimodules of the form $h_\by\otimes h^\bb$ in $\BD(\SCD^\opp\otimes\SCD)$. Applying the functor 
$\mu$ we conclude that the the object $P'$ is orthogonal to all objects $\epsilon(\bb)\boxtimes\epsilon(\by)^\vee$ in $\BD(X\times X)$. 
The latter generate the subcategory~$\CB_X$. Thus $P'$ is in the right orthogonal to $\CB_X$, hence $P' \in \CA_X$.
% 
% 
% 
% comparing~\eqref{bardbd} with the bar-resolution of $\SCD$ we conclude that the cone of the morphisms
% $\SCD\lotimes_\SCB\SCD \to \SCD$ is quasiisomorphic to 
% \begin{equation*}
% \bigoplus_{p=0}^\infty \quad \bigoplus_{\bx_0,\dots,\bx_p \in \SCD} h_{\bx_p}\otimes\SCD(\bx_p,\bx_{p-1})\otimes\dots\otimes\SCD(\bx_1,\bx_0)\otimes h^{\bx_0}
% \end{equation*}
%  
\end{proof}

% \subsection{Hochschild cohomology}

\section{Normal Hocschild cohomology}

In this section $X$ is a smooth projective variety.

\subsection{Hochschild cohomology}

The {\sf Hoschschild cohomology} of a DG-category is defined as
\begin{equation*}
\HOH^\bullet(\SCD) = \Ext^\bullet_{\BD(\SCD^\opp\otimes\SCD)}(\SCD,\SCD).
\end{equation*}
For an enhanced triangulated category the Hochschild cohomology is defined as the Hochschild
cohomology of the enhancement. The Hochschild cohomology of $\BD(X)$ will be denoted by $\HOH^\bullet(X)$.

Note that using the equivalence $\mu$ one can identify 
\begin{equation}\label{hohx}
\HOH^\bullet(X) \cong \Ext^\bullet(\Delta_*\CO_X,\Delta_*\CO_X) \cong \HH^\bullet(X,\Delta^!\Delta_*\CO_X).
\end{equation}
where $\Delta^!:\BD(X\times X) \to \BD(X)$ 
% Recall that the functor $\Delta_*:\BD(X \to \BD(X\times X)$ has a right adjoint functor
\begin{equation}\label{ds}
% \Delta^!:\BD(X\times X) \to \BD(X),\qquad
\Delta^!(F) = \LL\Delta^*(F)\otimes\omega_X^{-1}[-\dim X]
\end{equation} 
is the right adjoint functor of $\Delta_*:\BD(X) \to \BD(X\times X)$.

Computing the RHS of~\eqref{hohx} one obtains the Hochschild--Kostant--Rosenberg isomorphism
\begin{equation*}
\HOH^t(X) = \bigoplus_{p+q=t} H^q(X,\Lambda^pT_X).
\end{equation*}
In particular, the Hochschild cohomology of $X$ lives in nonnegative degrees and $\HOH^0(X) = \kk$
if $X$ is connected.

% Let $X = \BD^b(\coh(X))$ be the bounded derived category of coherent sheaves
% on a smooth projective variety $X$ and let 
Now assume that a semiorthogonal decomposition 
\begin{equation*}
\BD^b(\coh(X)) = \langle \CA,\CB \rangle
\end{equation*}
is given. The \v{C}ech enhancement of $\BD(X)$ induces natural enhancements of $\CA$ and $\CB$,
so we can speak about Hochschild cohomology of these categories.
% The Hochschild cohomology of the components $\CA$ and $\CB$ are defined as the Hochschild
% cohomologies of their enhancements induced by the enhancement of $\BD(X)$. 
% The goal of this section is to describe
% the Hochschild cohomology of $\CA$ and the canonical morphism $\HOH^\bullet(X) \to \HOH^\bullet(\CA)$
% in terms of category $\CB$. Further this description will be applied in case when $\CB \subset \BD(\coh(X))$ 
% is generated by an exceptional collection.
Recall the induced semiorthogonal decomposition
\begin{equation*}
\BD^b(\coh(X\times X)) = \langle \CA_X, \CB_X \rangle,
\end{equation*}
and the distinguished triangle
\begin{equation}\label{dpq}
Q \to \Delta_*\CO_X \to P
\end{equation}
with $Q\in \CB_X$ and $P \in \CA_X$. Furthermore, as it was shown in~\cite{K-HH} there is an isomorphism
\begin{equation*}
% \HOH(X) \cong \HH^\bullet(X,\Delta^!\Delta_X\CO_X),
% \qquad
\HOH^\bullet(\CA) \cong \HH^\bullet(X,\Delta^!P),
\end{equation*}
analogous to~\eqref{hohx}, and the restriction morphism $\HOH^\bullet(X) \to \HOH^\bullet(\CA)$ of Hochschild cohomology
is induced by the morphism $\Delta_*\CO_X \to P$ from~\eqref{dpq}. Consequently, one has a distinguished triangle
\begin{equation}\label{hdqtr}
\HH^\bullet(X,\Delta^!Q) \to \HOH^\bullet(X) \to \HOH^\bullet(\CA).
\end{equation}
The first term of this triangle can be thought of as a complex controlling the restriction map of Hochschild cohomology.
Our goal is to show how it can be computed in terms of the category $\CB$, especially in the case
when $\CB$ is generated by an exceptional collection.

\begin{lemma}\label{hdq}
We have $\HH^\bullet(X,\Delta^!Q) \cong \HH^\bullet(X\times X,Q \lotimes \Delta_*\omega_X^{-1}[-\dim X])$.
\end{lemma}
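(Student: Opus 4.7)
The plan is to unwind the definition of $\Delta^!$ given in~\eqref{ds} and reduce the statement to the projection formula for the diagonal embedding $\Delta\colon X \to X\times X$.

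First I would rewrite the left-hand side using~\eqref{ds}:
\begin{equation*}
\HH^\bullet(X,\Delta^!Q) = \HH^\bullet(X,\LL\Delta^*Q\otimes\omega_X^{-1}[-\dim X]).
\end{equation*}
Since $\Delta$ is a closed embedding (in particular an affine morphism), the functor $\Delta_*$ is exact and satisfies $\HH^\bullet(X,G) \cong \HH^\bullet(X\times X,\Delta_*G)$ for any $G \in \BD(X)$. Applying this to $G = \LL\Delta^*Q\otimes\omega_X^{-1}[-\dim X]$ gives
\begin{equation*}
\HH^\bullet(X,\Delta^!Q) \cong \HH^\bullet(X\times X,\Delta_*(\LL\Delta^*Q\otimes\omega_X^{-1}[-\dim X])).
\end{equation*}

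Next I would apply the projection formula for $\Delta$ (valid for arbitrary $Q \in \BD(X\times X)$ since the target is smooth and we are working derived), which yields
\begin{equation*}
\Delta_*(\LL\Delta^*Q\otimes\omega_X^{-1}[-\dim X]) \cong Q\lotimes\Delta_*(\omega_X^{-1}[-\dim X]).
\end{equation*}
Combining these two identities gives the desired isomorphism. The only mildly delicate point will be to ensure that the projection formula applies to a general (possibly unbounded, not a priori of finite Tor-dimension) object $Q$; however, since $\omega_X^{-1}[-\dim X]$ is a shift of a line bundle on $X$, $\Delta_*(\omega_X^{-1}[-\dim X])$ has finite Tor-dimension over $X\times X$, so the derived tensor product and the projection formula are well-behaved without any boundedness hypothesis on $Q$.

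Overall the proof is essentially a one-line manipulation, and I do not foresee any serious obstacle beyond verifying that the projection formula is applicable in the derived setting we are working in.
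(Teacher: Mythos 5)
Your proposal is correct and follows exactly the paper's own argument: unwind $\Delta^!$ via~\eqref{ds}, push forward along the closed embedding $\Delta$, and apply the projection formula. The extra care you take about the applicability of the projection formula is fine but not needed beyond what the paper already assumes.
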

\begin{proof}
This evidently follows from~\eqref{ds}. Indeed, we have
\begin{multline*}
\HH^\bullet(X,\Delta^!Q) \cong 
\HH^\bullet(X,\LL\Delta^*Q\otimes \omega_X^{-1}[-\dim X]) \cong \\ \cong
\HH^\bullet(X\times X,\Delta_*(\LL\Delta^*Q\otimes \omega_X^{-1}[-\dim X])) \cong 
\HH^\bullet(X\times X,Q\lotimes \Delta_*\omega_X^{-1}[-\dim X])
\end{multline*}
(the last isomorphism is the projection formula).
\end{proof}

\subsection{The normal bimodule}

Let $\SCD = \SCD_X$ be the \v{C}ech enhancement of $\BD^b(\coh(X))$ and $\SCB \subset \SCD$ a DG-subcategory.

\begin{definition}
The {\sf normal bimodule} of the embedding $\SCB \to \SCD$ is the restriction $\SCD^\vee_\SCB$ to $\SCB$ 
of the $\SCD$-$\SCD$-bimodule $\SCD^\vee_X$ defined by~\eqref{ddis}.
% 
% Let $\SCD^\vee_\SCB$ be the restriction of the $\SCD$-bimodule $\SCD^\vee$ to the DG-subcategory $\SCB \subset \SCD$.
% This $\SCB$-bimodule will be called {\sf the normal bimodule} of the embedding $\SCB \to \SCD$.
The {\sf normal Hochschild cohomology} of $\SCB$ in $\SCD$ is defined as the derived tensor product
of the diagonal and the normal bimodule of $\SCB$
\begin{equation*}
\NHH^\bullet(\SCB,\SCD) := \SCB \lotimes_{\SCB^\opp\otimes\SCB} \SCD^\vee_\SCB.
\end{equation*}
In case when 
% $\SCD$ is the \v{C}ech enhancement of $\BD(X)$ and 
$\SCB$ is the induced enhancement 
of a subcategory $\CB \subset \BD^b(\coh(X))$ we will write $\NHH^\bullet(\CB,X)$ instead of $\NHH^\bullet(\SCB,\SCD)$.
\end{definition}

% Let $\SCD$ be the \v{C}ech enhancement of $\BD^b(\coh(X))$ discussed above and let $\mu:\BD(\SCD^\opp\otimes\SCD) \to \BD(X\times X)$ be the induced equivalence.

\begin{theorem}\label{rhhtri}
If $\BD^b(\coh(X)) = \langle \CA,\CB \rangle$ is a semiorthogonal decomposition then there is a distinguished triangle
\begin{equation}\label{rhh}
\NHH^\bullet(\CB,X) \to \HOH^\bullet(X) \to \HOH^\bullet(\CA).
\end{equation}
% where $\HOH_\bullet(\SCB,\SCD^\vee_\SCB) := \SCB \lotimes_{\SCB^\opp\otimes\SCB} (\SCD^\vee_\SCB)$ is the Hochschild homology of $\SCB$ 
% with coefficients in the normal bimodule.
\end{theorem}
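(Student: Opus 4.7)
The plan is to deduce the theorem from the distinguished triangle~\eqref{hdqtr}: since its middle and right terms already match those of~\eqref{rhh}, it suffices to identify the first term $\HH^\bullet(X,\Delta^!Q)$ with the normal Hochschild cohomology $\NHH^\bullet(\CB,X)$.

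The identification proceeds in two stages. First, Lemma~\ref{hdq} gives $\HH^\bullet(X,\Delta^!Q)\cong \HH^\bullet(X\times X,Q\lotimes\Delta_*\omega_X^{-1}[-\dim X])$. Proposition~\ref{muq} and Lemma~\ref{mudd} identify both tensor factors as images under~$\mu$, namely $Q\cong\mu(\SCD\lotimes_\SCB\SCD)$ and $\Delta_*\omega_X^{-1}[-\dim X]\cong\mu(\SCD^\vee)$; moreover, since $T\circ\Delta=\Delta$, the sheaf $\Delta_*\omega_X^{-1}[-\dim X]$ is $T$-invariant, so $\mu(\SCD^\vee)^T\cong\mu(\SCD^\vee)$. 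Applying~\eqref{mukk} then yields
\begin{equation*}
\HH^\bullet(X,\Delta^!Q) \cong (\SCD\lotimes_\SCB\SCD)\lotimes_{\SCD^\opp\otimes\SCD}\SCD^\vee.
\end{equation*}
Second, I would establish the purely DG-categorical identity
\begin{equation*}
(\SCD\lotimes_\SCB\SCD)\lotimes_{\SCD^\opp\otimes\SCD}\SCD^\vee \cong \SCB\lotimes_{\SCB^\opp\otimes\SCB}\SCD^\vee_\SCB = \NHH^\bullet(\CB,X),
\end{equation*}
which is a projection-formula-type statement for bimodule induction along $\SCB\hookrightarrow\SCD$. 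One direct route is to write $\SCD\lotimes_\SCB\SCD\cong\SCD\lotimes_\SCB\SCB\lotimes_\SCB\SCD$ and substitute the bar resolution~\eqref{bard} of~$\SCB$: each representable $\SCB$-bimodule summand $h_{\bb_p}\otimes_\kk h^{\bb_0}$ becomes, after tensoring on both sides with $\SCD$, the analogous representable $\SCD$-bimodule; for such a representable, applying~\eqref{mukk} together with Lemma~\ref{mudd} and~\eqref{ddis} gives the contribution $\SCD^\vee(\bb_0,\bb_p)=\SCD^\vee_\SCB(\bb_0,\bb_p)$, the last equality holding because $\bb_0,\bb_p\in\SCB$. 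The total complex so obtained is precisely the bar complex computing $\SCB\lotimes_{\SCB^\opp\otimes\SCB}\SCD^\vee_\SCB$.

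The main obstacle I anticipate is the final bar-complex identification: one must verify that the differentials match, i.e.\ that the bar differentials assembled from the composition law in $\SCD$ and the $\SCD$-actions on $\SCD^\vee$ restrict correctly to the Hochschild differential of $\SCB$ with coefficients in $\SCD^\vee_\SCB$. This is a formal but slightly tedious compatibility check, and the $T$-invariance of $\Delta_*\omega_X^{-1}[-\dim X]$ must be invoked uniformly so that~\eqref{mukk} applies with the correct orientation at each stage.
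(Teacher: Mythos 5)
Your proposal is correct and follows essentially the same route as the paper: reduce via Lemma~\ref{hdq}, identify the tensor factors through Proposition~\ref{muq}, Lemma~\ref{mudd} and~\eqref{mukk} (including the $T$-invariance of $\Delta_*\omega_X^{-1}[-\dim X]$, which the paper uses implicitly), and then apply a projection-formula identity for the inclusion $\SCB\hookrightarrow\SCD$. The only difference is presentational: the paper packages your final bar-resolution computation as the one-line identity $\SCD\lotimes_\SCB\SCD\cong\SCB\lotimes_{\SCB^\opp\otimes\SCB}(\SCD\otimes_\kk\SCD)$ followed by associativity of the derived tensor product.
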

\begin{proof}
By Lemma~\ref{hdq} it is enough to identify $\HH^\bullet(X\times X,Q \lotimes \Delta_*\omega_X^{-1}[-\dim X])$ 
with the normal Hochschild cohomology. Recall the equivalence $\mu:\BD(\SCD^\opp\otimes\SCD) \to \BD(X\times X)$
constructed in the previous section. By Proposition~\ref{muq} we have an isomorphism 
$Q \cong \mu(\SCD\lotimes_\SCB\SCD)$ and by Lemma~\ref{mudd} an isomorphism
$\Delta_*\omega_X^{-1}[-\dim X] \cong \mu(\SCD_X^\vee)^T$. Therefore by~\eqref{mukk} we have
\begin{equation*}
\HH^\bullet(X\times X,Q \lotimes \Delta_*\omega_X^{-1}[-\dim X]) \cong
(\SCD\lotimes_\SCB\SCD)\lotimes_{\SCD^\opp\otimes\SCD} \SCD_X^\vee.
\end{equation*}
On the other hand, 
\begin{equation*}
\SCD\lotimes_\SCB\SCD \cong
\SCB \lotimes_{\SCB^\opp\otimes\SCB} (\SCD \otimes_\kk \SCD),
\end{equation*}
hence
\begin{equation*}
(\SCD\lotimes_\SCB\SCD) \lotimes_{\SCD^\opp\otimes\SCD} \SCD_X^\vee \cong
\SCB \lotimes_{\SCB^\opp\otimes\SCB} (\SCD \otimes_\kk \SCD) \lotimes_{\SCD^\opp\otimes\SCD} \SCD_X^\vee \cong
\SCB \lotimes_{\SCB^\opp\otimes\SCB} \SCD^\vee_\SCB
\end{equation*}
and this is precisely the normal Hochschild cohomology.
% 
% 
% denoting by ${}_\SCB\SCD$ the restriction of the diagonal $\SCD$-bimodule $\SCD$ to a $\SCB$-$\SCD$-bimodule
% and by $\SCD_\SCB$ the restriction of $\SCD$ to a $\SCD$-$\SCB$-bimodule, we have
% 
% 
% Combining triangle~\eqref{hdqtr} with quasiisomorphism~\eqref{hdqs}, we see that it remains to identify the RHS of~\eqref{hdqs}
% with the first term of~\eqref{rhh}. For this we note that the normal bimodule $\SCD^\vee_\SCB$ can be rewritten as
% \begin{equation*}
% \SCD^\vee_\SCB \cong {}_\SCB\SCD \lotimes_\SCD \SCD^\vee \lotimes_\SCD \SCD_\SCB,
% \end{equation*}
% where ${}_\SCB\SCD$ is the restriction of the diagonal $\SCD$-bimodule $\SCD$ to a $\SCB$-$\SCD$-bimodule
% and $\SCD_\SCB$ is the restriction of $\SCD$ to a $\SCD$-$\SCB$-bimodule. Therefore
% \begin{equation*}
% \SCB \lotimes_{\SCB^\opp\otimes\SCB} (\SCD^\vee_\SCB) \cong
% \SCB \lotimes_{\SCB^\opp\otimes\SCB} ({}_\SCB\SCD \lotimes_\SCD \SCD^\vee \lotimes_\SCD \SCD_\SCB) \cong
% (\SCD_\SCB \lotimes_\SCB \SCB \lotimes_\SCB {}_\SCB\SCD) \lotimes_{\SCD^\opp\otimes\SCD} \SCD^\vee
% \end{equation*}
% and the last term here is the same as the RHS of~\eqref{hdqs}.
\end{proof}

% It is convenient to think of $\SCB \lotimes_{\SCB^\opp\otimes\SCB} (\SCD^\vee_\SCB)$ as of the Hochschild homology
% of $\SCB$ with coefficients in the bimodule $\SCD^\vee_\SCB$. 
% Our next goal is to compute $\HOH_\bullet(\SCB,\SCD^\vee_\SCB)$ in the case when $\SCB$
% is generated by an exceptional collection.

\begin{remark}
Analogous result can be proved for arbitrary smooth strongly pretriangulated DG-category~$\SCD$. If a semiorthognal decomposition
$[\SCD] = \langle \CA,\CB \rangle$ is given and $\SCA,\SCB\subset \SCD$ are the DG-subcategories underlying $\CA$ and $\CB$ respectively, 
then there is a distinguished triangle
\begin{equation*}
\NHH^\bullet(\SCB,\SCD) \to \HOH^\bullet(\SCD) \to \HOH^\bullet(\SCA).
\end{equation*}
The proof is completely analogous to the one described here. One considers a distinguished triangle
$\SCD\lotimes_\SCB\SCD \to \SCD \to P'$ in $\BD(\SCD^\opp\otimes\SCD)$ and verifies that
$\HOH^\bullet(\SCD) = \Ext^\bullet(\SCD,\SCD) \cong \SCD\lotimes_{\SCD^\opp\otimes\SCD}\SCD^\vee$ (here the smoothness
of $\SCD$ is important) and that $\HOH^\bullet(\SCA) \cong \Ext^\bullet(P',P') \cong
\Ext^\bullet(\SCD,P') \cong P'\lotimes_{\SCD^\opp\otimes\SCD}\SCD^\vee$ (the first isomorphism here is the most subtle part;
to prove it one constructs a fully faithful embedding $\BD(\SCA^\opp\otimes\SCA) \to \BD(\SCD^\opp\otimes\SCD)$
which takes the diagonal $\SCA$ to $P'$, this embedding is not the trivial one, in fact it is the embedding induced
by the embedding $\SCA \to \SCD$ and the ``mutated'' embedding $\CA \cong {}^\perp\CB \subset [\SCD]$).
This allows to identify the cone of the morphism $\HOH^\bullet(\SCD) \to \HOH^\bullet(\SCA)$ with the (shift of)
$(\SCD\lotimes_\SCB\SCD)\lotimes_{\SCD^\opp\otimes\SCD}\SCD^\vee$ which is just the normal Hochschild cohomology.
\end{remark}

\subsection{The normal Hochschild cohomology of an exceptional collection}

Consider the case when the subcategory $\CB \subset \BD^b(\coh(X))$ is generated by an exceptional collection,
\begin{equation*}
\CB= \langle E_1,\dots,E_n \rangle.
\end{equation*}
As in the previous section we consider the \v{C}ech enhancement $\SCD$ of $\BD^b(\coh(X))$ and take
$\SCE$ to be its DG-subcategory with $n$ objects --- $E_1,\dots,E_n$. Note that $\SCE$ is a subcategory
of the DG-category $\SCB$ considered above.

By definition of the normal bimodule of $\SCE$ in $\SCD$ we have
\begin{equation}\label{sbe}
\SCD^\vee_\SCE(E_i,E_j) = \Hom_\SCD(E_j,S^{-1}(E_i)).
\end{equation} 

\begin{lemma}
The normal cohomology of $\SCE$ and $\SCB$ in $\SCD$ are the same, i.e.
$\NHH^\bullet(\SCE,\SCD) \cong \NHH^\bullet(\SCB,\SCD)$.
% We have $\HOH_\bullet(\SCE,\SCD^\vee_\SCE) \cong \HOH_\bullet(\SCB,\SCD^\vee_\SCB)$. 
\end{lemma}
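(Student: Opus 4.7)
The plan is to exploit the fact that the inclusion DG-functor $F\colon \SCE \hookrightarrow \SCB$ is a Morita equivalence: $F$ is fully faithful, and the objects $E_1,\dots,E_n$ generate $\CB = [\SCB]$ as a triangulated category closed under direct summands. By construction, the normal bimodule $\SCD^\vee_\SCE$ of $\SCE$ in $\SCD$ is simply the restriction of $\SCD^\vee_\SCB$ along $G := F^\opp \otimes F\colon \SCE^\opp\otimes\SCE \to \SCB^\opp\otimes\SCB$, i.e.\ $\SCD^\vee_\SCE = \Res_G(\SCD^\vee_\SCB)$. The standard projection formula for derived tensor products (a formal consequence of the definition $\LInd_G(M) = M\lotimes_{\SCE^\opp\otimes\SCE} {}_G(\SCB^\opp\otimes\SCB)$ and associativity of $\lotimes$) then gives
\[
\NHH^\bullet(\SCE,\SCD) = \SCE \lotimes_{\SCE^\opp\otimes\SCE} \Res_G(\SCD^\vee_\SCB) \cong \LInd_G(\SCE) \lotimes_{\SCB^\opp\otimes\SCB} \SCD^\vee_\SCB,
\]
so the lemma is reduced to exhibiting a quasi-isomorphism of $\SCB$-bimodules $\LInd_G(\SCE) \cong \SCB$.

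To produce this quasi-isomorphism, I would apply $\LInd_G$ termwise to the bar resolution $\BB(\SCE)$ of the diagonal $\SCE$-bimodule from~\eqref{bard}. Since $\LInd_F$ sends representable $\SCE$-modules to representable $\SCB$-modules, the result is exactly the bar complex computing $\SCB\lotimes_\SCE\SCB$, so $\LInd_G(\SCE) \cong \SCB\lotimes_\SCE\SCB$, and the natural composition morphism $\SCB\lotimes_\SCE\SCB \to \SCB$ is the candidate quasi-isomorphism. To verify it, evaluate at each $(\bx_1,\bx_2) \in \SCB \times \SCB$: for $\bx_1,\bx_2 \in \SCE$, full faithfulness of $F$ identifies the evaluation with the tautological identity $\SCE\lotimes_\SCE\SCE \cong \SCE$; for general $\bx_i \in \SCB$, one extends by devissage, using that both bifunctors $(\bx_1,\bx_2) \mapsto (\SCB\lotimes_\SCE\SCB)(\bx_1,\bx_2)$ and $(\bx_1,\bx_2)\mapsto \SCB(\bx_1,\bx_2)$ commute with cones and direct summands in each argument, and that $\SCE$ generates $\SCB$ under these operations.

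The main obstacle is this last devissage. The key point is that one must use that $\SCB$ is the \emph{full} enhancement of $\CB$ (and in particular closed under isomorphisms and direct summands in $\SCD$), without which the generation hypothesis would fail. Once this is granted, the verification that the comparison map of bifunctors is a quasi-isomorphism is formal: the derived tensor product automatically respects cones and summands in each argument, and on the generating pair $\SCE \times \SCE$ the map reduces to the identity. The remainder of the argument is the standard yoga of adjunctions and bar resolutions for DG-categories, and no additional input is needed beyond the setup already recorded in the Preliminaries.
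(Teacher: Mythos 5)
Your argument is correct and is essentially the paper's own proof written out in detail: the paper simply invokes the fact that restriction of bimodules induces an equivalence $\BD(\SCB^\opp\otimes\SCB)\cong\BD(\SCE^\opp\otimes\SCE)$ compatible with derived tensor products and matching the diagonal and normal bimodules, which is exactly what your adjunction/projection-formula reduction together with the quasi-isomorphism $\SCB\lotimes_\SCE\SCB\to\SCB$ (verified by devissage from the generating objects of $\SCE$) establishes. No gap.
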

\begin{proof}
Follows from the fact that the restriction of DG-bimodules from $\SCB$ to $\SCE$ induces an equivalence 
$\BD(\SCB^\opp\otimes\SCB) \cong \BD(\SCE^\opp\otimes\SCE)$ compatible with the tensor product and
taking the diagonal bimodule $\SCB$ to the diagonal bimodule $\SCE$ and the normal bimodule $\SCD^\vee_\SCB$ to $\SCD^\vee_\SCE$.
\end{proof}

Our goal is to compute the normal Hochschild cohomology of $\SCE$ in $\SCD$, i.e.\
the derived tensor product of the diagonal bimodule $\SCE$ with $\SCD^\vee_\SCE$.
% \begin{equation*}
% \SCE(E_i,E_j) = \Hom_\SCD(E_j,E_i).
% \end{equation*}
For this we use the bar-resolution~\eqref{bard} of $\SCE$.
%  of the diagonal bimodule
% \begin{equation*}
% % \BB(\SCE) = \bigoplus_{p=0}^\infty \quad \bigoplus_{\bx_0,\dots,\bx_p \in \SCE} h_{\bx_p}\otimes \SCE(\bx_p,\bx_{p-1})\otimes \dots \otimes \SCE(\bx_1,\bx_0) \otimes h^{\bx_0}[p],
% \BB(\SCE) = \bigoplus_{\bx_0,\dots,\bx_p \in \SCE} \SCE(-,\bx_p)\otimes \SCE(\bx_p,\bx_{p-1})\otimes \dots \otimes \SCE(\bx_1,\bx_0) \otimes \SCE(\bx_0,-)[p],
% \end{equation*}
% % where $h_\bx$ is the left DG-module over representable by the object $\bx$ and $h^\bx$ is the right DG-module representable by $\bx$.
% % The differential in the bar-resolution is induced by the differentials in the complexes $h_{\bx_p}$, $h^{\bx_0}$, and $\SCE(\bx_i,\bx_{i-1})$
% % and by the multiplications $h_\{\bx_p}\otimes\SCE(
% The differential in the bar-resolution is induced by the differentials in the complexes $\SCE(-,-)$
% and by the multiplication maps $\SCE\otimes\SCE \to \SCE$ between adjacent factors. 
Since the DG-category $\SCE$ is generated by an exceptional collection one can simplify the bar-resolution a bit. 

First note that each collection $\bx_0,\dots,\bx_p$ of objects of $\SCE$ is just a sequence $E_{a_0}, \dots, E_{a_p}$
of exceptional objects in the collection $E_1,\dots,E_n$. Thus collections $\bx_0,\dots,\bx_p \in \SCE$ are in bijection
with collections of integers $a_0,\dots,a_p \in \{1,\dots,n\}$. Since $\SCE(\bx_i,\bx_{i-1}) = \Hom_\SCD(E_{a_{i-1}},E_{a_i})$
is acyclic when $a_i < a_{i-1}$, we can omit all collections $a_0,\dots,a_p$ which have $a_{i} < a_{i-1}$ for some $i$,
thus leaving  only nondecreasing collections.
Moreover, since $\Hom_\SCD(E_a,E_a)$ is quasiisomorphic to the base field $\kk$ and so the multiplication map
\begin{equation*}
\Hom_\SCD(E_a,E_a)\otimes\Hom_\SCD(E_a,E_{a'}) \to \Hom_\SCD(E_a,E_{a'})
\end{equation*}
is a quasiisomorphism, we can omit all collections $a_0,\dots,a_p$ which are not strictly increasing.
Thus we have the following

\begin{lemma}
The diagonal bimodule $\SCE$ is quasiisomorphic to the following reduced bar-complex
\begin{equation*}
\bar\BB(\SCE) =  \bigoplus_{1\le a_0 < \dots < a_p \le n} 
\Hom_\SCD(E_{a_p},-)\otimes \Hom_\SCD(E_{a_{p-1}},E_{a_p})\otimes \dots \otimes \Hom_\SCD(E_{a_0},E_{a_1}) \otimes \Hom_\SCD(-,E_{a_0})[p],
\end{equation*}
\end{lemma}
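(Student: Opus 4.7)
The plan is to exhibit $\bar\BB(\SCE)$ inside the full bar-resolution $\BB(\SCE)$ of~\eqref{bard} and to argue that the inclusion is a quasi-isomorphism; combined with the standard quasi-isomorphism $\BB(\SCE) \to \SCE$ this yields the claim.

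First I would identify the objects of $\SCE$ with $\{E_1,\dots,E_n\}$, so that each summand of~\eqref{bard} is labelled by a sequence $a_0,\dots,a_p \in \{1,\dots,n\}$ and the tensor factor $\SCE(\bx_i,\bx_{i-1})$ becomes $\Hom_\SCD(E_{a_{i-1}},E_{a_i})$. The bar-differential acts by signed deletions of a single index, and deletion preserves strict monotonicity; hence the span of strictly increasing tuples forms the subcomplex $\bar\BB(\SCE) \subset \BB(\SCE)$ described in the lemma.

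It remains to show that the quotient $\BB(\SCE)/\bar\BB(\SCE)$, generated by tuples containing at least one plateau $(a_{i-1}=a_i)$ or descent $(a_{i-1}>a_i)$, is acyclic. I would invoke the two defining features of an exceptional collection: $\Hom_\SCD(E_{a_{i-1}},E_{a_i})$ is acyclic whenever $a_{i-1}>a_i$, and is quasi-isomorphic to $\kk\cdot\id$ whenever $a_{i-1}=a_i$. The strategy is a two-step reduction: first perform the standard normalized-bar reduction relative to the semisimple subalgebra $\bigoplus_a \kk\cdot\id_{E_a}$, contracting all plateau factors and leaving a complex indexed by tuples with pairwise distinct consecutive indices; then filter by the number of descents, so that each non-trivial summand of the associated graded carries an acyclic tensor factor and is itself acyclic. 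A bounded spectral-sequence argument (the descent count is at most $p$) then gives acyclicity of the quotient.

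The main obstacle is the compatibility of the two reductions with the bar-differential: one must verify that the descent-filtration is preserved after the plateau-normalization and that the induced differentials on the spectral-sequence pages are the expected bar-differentials on normalized nondecreasing sequences. This is standard $A_\infty$-bookkeeping, reducing via the Leibniz rule to the facts that the composition of a morphism with an acyclic one is acyclic and that composing with the identity is a quasi-isomorphism.
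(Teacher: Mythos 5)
Your proposal is correct and follows essentially the same route as the paper: the paper's (very terse) justification is exactly the two reductions you describe — discarding sequences with a descent because $\Hom_\SCD(E_{a_{i-1}},E_{a_i})$ is acyclic for $a_{i-1}>a_i$, and discarding sequences with a repetition because $\Hom_\SCD(E_a,E_a)\simeq\kk$ makes composition with it a quasi-isomorphism — applied to the bar-resolution~\eqref{bard}. You merely perform the two steps in the opposite order and supply the filtration/spectral-sequence bookkeeping that the paper leaves implicit.
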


Using the reduced bar-resolution we easily deduce

\begin{proposition}
The normal Hochschild cohomology $\NHH^\bullet(\SCE,\SCD)$ of the DG-subcategory $\SCE \subset \SCD$
generated by an exceptional collection $E_1,\dots,E_n$ is isomorphic to the cohomology 
of the bicomplex $\SCH^{\bullet,\bullet}$ with
\begin{equation*}
\SCH^{-p,q} = \bigoplus_{\substack{1\le a_0 < \dots < a_p \le n\\k_0+\dots+k_p=q}} 
\Hom^{k_0}_\SCD(E_{a_0},E_{a_1}) \otimes \dots \otimes \Hom^{k_{p-1}}_\SCD(E_{a_{p-1}},E_{a_p}) \otimes \Hom^{k_p}_\SCD(E_{a_p},S^{-1}(E_{a_0})),
\end{equation*}
and with the differential $d = d'+ d''$, where $d':\SCH^{-p,q} \to \SCH^{-p,q+1}$ is induced by the differentials of the complexes $\Hom_\SCD$ and 
$d'':\SCH^{-p,q} \to \SCH^{1-p,q}$ is induced by the multiplication maps between the adjacent factors
and the map
\begin{multline*}
\Hom^{k_p}_\SCD(E_{a_p},S^{-1}(E_{a_0})) \otimes \Hom^{k_0}_\SCD(E_{a_0},E_{a_1}) \cong \\ \cong
\Hom^{k_p}_\SCD(E_{a_p},S^{-1}(E_{a_0})) \otimes \Hom^{k_0}_\SCD(S^{-1}(E_{a_0}),S^{-1}(E_{a_1})) \to
\Hom^{k_p+k_0}_\SCD(E_{a_p},S^{-1}(E_{a_1})).
\end{multline*}
\end{proposition}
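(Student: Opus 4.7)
The plan is to compute the derived tensor product
\[
\NHH^\bullet(\SCE,\SCD) = \SCE \lotimes_{\SCE^\opp\otimes\SCE} \SCD^\vee_\SCE
\]
by replacing the diagonal bimodule in the first factor with the reduced bar-resolution $\bar\BB(\SCE)$ provided by the preceding lemma. Since representable bimodules $h_{\bx}\otimes h^{\by}$ are h-projective, the derived tensor product on each summand of $\bar\BB(\SCE)$ reduces to an ordinary tensor product, and the whole computation becomes the totalization of an explicit double complex which I would then identify with $\SCH^{\bullet,\bullet}$.

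First I would verify the summand-wise computation. Each direct summand of $\bar\BB(\SCE)$ is the tensor product of a representable bimodule $h_{E_{a_p}}\otimes h^{E_{a_0}}$ with the complex of $\kk$-vector spaces $\SCE(E_{a_p},E_{a_{p-1}})\otimes\cdots\otimes\SCE(E_{a_1},E_{a_0})$, which is inert under tensoring over $\SCE^\opp\otimes\SCE$. The basic identity
\[
(h_{E_{a_p}}\otimes h^{E_{a_0}})\lotimes_{\SCE^\opp\otimes\SCE}\SCD^\vee_\SCE \cong \SCD^\vee_\SCE(E_{a_0},E_{a_p}) = \Hom_\SCD(E_{a_p},S^{-1}(E_{a_0})),
\]
combined with the convention $\SCE(E_{a_i},E_{a_{i-1}}) = \Hom_\SCD(E_{a_{i-1}},E_{a_i})$, yields precisely the group $\SCH^{-p,q}$ of the proposition; the homological shift $[p]$ accounts for the bar degree.

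Second I would identify the two parts of the total differential. The internal differentials of the Hom-complexes combine to give the vertical differential $d'$. The bar differential of $\bar\BB(\SCE)$ is an alternating sum of three kinds of maps: (a) the inner compositions $\SCE(E_{a_{i+1}},E_{a_i})\otimes\SCE(E_{a_i},E_{a_{i-1}})\to\SCE(E_{a_{i+1}},E_{a_{i-1}})$, deleting an intermediate object; (b) the right-end composition $h_{E_{a_p}}\otimes\SCE(E_{a_p},E_{a_{p-1}})\to h_{E_{a_{p-1}}}$; and (c) the left-end composition $\SCE(E_{a_1},E_{a_0})\otimes h^{E_{a_0}}\to h^{E_{a_1}}$. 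After tensoring with $\SCD^\vee_\SCE$, maps of types (a) and (b) become the "multiplications between adjacent factors" mentioned in the statement. The map of type (c) is the cyclic one: using the functoriality isomorphism $\Hom_\SCD(E_{a_0},E_{a_1})\cong\Hom_\SCD(S^{-1}(E_{a_0}),S^{-1}(E_{a_1}))$ and then postcomposing with the wraparound factor $\Hom_\SCD(E_{a_p},S^{-1}(E_{a_0}))$, it reproduces exactly the displayed extra composition.

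The main obstacle will be the careful verification of the cyclic term in (c), namely that the left-end bar-differential, when pushed through the tensor product with $\SCD^\vee_\SCE$, really does yield the displayed map through $S^{-1}$ with the correct sign; this is where the functoriality of the Serre kernel interacts with the identification of $h_{E_{a_p}}\otimes h^{E_{a_0}}$ with $\Hom_\SCD(E_{a_p},S^{-1}(E_{a_0}))$. Apart from this bookkeeping, the remainder is the routine unpacking of the bar differential and identification of the totalization with $\SCH^{\bullet,\bullet}$; the proposition then follows since $\bar\BB(\SCE)\to\SCE$ is a resolution.
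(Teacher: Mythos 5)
Your proposal is correct and follows exactly the route the paper takes (the paper states the result as an immediate consequence of substituting the reduced bar-resolution $\bar\BB(\SCE)$ into the derived tensor product and using that representable bimodules are h-projective, so that each summand evaluates to $\SCD^\vee_\SCE(E_{a_0},E_{a_p})=\Hom_\SCD(E_{a_p},S^{-1}(E_{a_0}))$). Your identification of the three components of the bar differential, with the left-end composition producing the cyclic map through $S^{-1}$, is precisely the bookkeeping the paper leaves implicit.
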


The bicomplex $\SCH$ will be referred to as {\sf the normal Hochschild bicomplex} of $\SCE$ in $\SCD$.

\begin{remark}
Alternatively, we could use the reduced bar-resolution of the normal bimodule
% \begin{multline*}
\begin{equation*}
\bar\BB(\SCD^\vee_\SCE) =  \bigoplus_{\substack{1\le a_0 < \dots < a_p \le n}} 
\Hom_\SCD(E_{a_p},-)\otimes \Hom_\SCD(E_{a_{p-1}},E_{a_p})\otimes \dots \otimes \Hom_\SCD(E_{a_0},E_{a_1}) \otimes 
\SCD^\vee_\SCE(E_{a_0},-) 
\end{equation*}
(note that by~\eqref{sbe} the bimodule $\SCD^\vee_\SCE$ is representable as a right $\SCE$-module, hence
it is enough to take its bar-resolution only as of a left module).
% \otimes 
% \\ \otimes
% \Hom_\SCD(E_{b_{q-1}},E_{b_q}) \otimes \dots \otimes\Hom_\SCD(E_{b_0},E_{b_1}) \otimes \Hom_\SCD(-,E_{b_0})[p + q],
% \end{multline*}
% where the sum is taken over all pairs of chains $1 \le a_0 < \dots < a_p \le n$ and $1\le b_0 < \dots < b_q \le n$.
It is easy to see that tensoring it with $\SCE$ gives literally the same bicomplex $\SCH$.
\end{remark}

Consider the spectral sequence of the bicomplex $\SCH$. Its first page is obtained by taking the cohomology with respect to $d'$.
Thus
\begin{equation}\label{ehpq}
\BE_1(\SCH)^{-p,q} = 
\bigoplus_{\substack{1\le a_0 < \dots < a_p \le n\\k_0+\dots+k_p=q}} 
\Ext^{k_0}(E_{a_0},E_{a_1}) \otimes \dots \otimes \Ext^{k_{p-1}}(E_{a_{p-1}},E_{a_p}) \otimes \Ext^{k_p}(E_{a_p},S^{-1}(E_{a_0})),
\end{equation}
The differential $d_1$ is induced by the multiplication maps $m_2$. The higher differentials $d_2$, $d_3$ and so on are induced
by the higher multiplication maps $m_3$, $m_4$ and so on in the $A_\infty$ structure on $\Ext$'s induced by the DG-structure of the complexes $\Hom_\SCD$.

The spectral sequence of the normal Hochschild complex $\SCH$ will be referred to as the {\sf normal Hochschild spectral sequence}.

% $E_1,\dots,E_n$ be an exceptional 
% collection in $\BD^b(\coh(X))$. Then we have a semiorthogonal decomposition
% \begin{equation*}
% \BD(\coh(X)) = \langle \CA,E_1,\dots,E_n \rangle,
% \qquad\text{with $\CA = \langle E_1,\dots,E_n\rangle^\perp$.}
% \end{equation*}

\section{Height of an exceptional collection}

\subsection{Height and pseudoheight}

The height and the pseudoheight of an exceptional collection $E_1,\dots,E_n$ are invariants controlling
the difference between the Hochschild cohomology of $X$ and that of the orthogonal complement 
\begin{equation}\label{ca}
\CA = \langle E_1,\dots,E_n\rangle^\perp
\end{equation}
of the collection. The height is defined in terms of the normal Hochschild cohomology.

\begin{definition}
The {\sf height} of an exceptional collection $E_1,\dots,E_n$ is defined as
\begin{equation*}
\he(E_1,\dots,E_n) = \min \{ k \in \ZZ\ |\ \NHH^k(\SCE,\SCD) \ne 0 \},
\end{equation*}
where $\SCE$ is the DG-category generated by $E_1,\dots,E_n$.
\end{definition}

We have the following simple consequence of Theorem~\ref{rhhtri}.

\begin{theorem}\label{hl}
Let $h = \he(E_1,\dots,E_n)$ be the height of an exceptional collection $E_1,\dots,E_n$ and
let $\CA$ be its orthogonal complement~\eqref{ca}. The canonical restriction morphism
$\HOH^k(X) \to \HOH^k(\CA)$ is an isomorphism for $k \le h-2$ and a monomorphism for $k = h-1$.
\end{theorem}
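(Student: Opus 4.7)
The plan is to derive this theorem directly from Theorem~\ref{rhhtri} together with the Lemma identifying $\NHH^\bullet(\SCE,\SCD) \cong \NHH^\bullet(\SCB,\SCD)$, where $\SCB$ is the enhancement of $\CB = \langle E_1,\dots,E_n\rangle$. Since the height $h$ is by definition the smallest integer with $\NHH^h(\SCE,\SCD) \ne 0$, we have
\begin{equation*}
\NHH^k(\CB,X) = 0 \qquad \text{for all } k < h.
\end{equation*}

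First, I would pass from the distinguished triangle
\begin{equation*}
\NHH^\bullet(\CB,X) \to \HOH^\bullet(X) \to \HOH^\bullet(\CA)
\end{equation*}
given by Theorem~\ref{rhhtri} to its associated long exact sequence of graded vector spaces
\begin{equation*}
\cdots \to \NHH^k(\CB,X) \to \HOH^k(X) \to \HOH^k(\CA) \to \NHH^{k+1}(\CB,X) \to \HOH^{k+1}(X) \to \cdots
\end{equation*}

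Next, I would plug in the vanishing. For $k \le h-2$ both flanking terms $\NHH^k(\CB,X)$ and $\NHH^{k+1}(\CB,X)$ vanish (since $k+1 \le h-1 < h$), so the middle map $\HOH^k(X) \to \HOH^k(\CA)$ is an isomorphism. For $k = h-1$ only the left flanking term $\NHH^{h-1}(\CB,X)$ is forced to vanish, which gives injectivity of $\HOH^{h-1}(X) \to \HOH^{h-1}(\CA)$, while the connecting map into $\NHH^h(\CB,X)$ may well be nonzero.

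There is essentially no obstacle here once the distinguished triangle of Theorem~\ref{rhhtri} is available; the statement is a formal consequence of the long exact sequence and the definition of the height. The only minor point to note is the identification $\NHH^\bullet(\CB,X) = \NHH^\bullet(\SCE,\SCD)$, which has already been established, so that the definition of $h$ in terms of $\SCE$ applies directly to the first term of the triangle.
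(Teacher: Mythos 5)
Your proof is correct and coincides with the paper's argument: both pass from the distinguished triangle of Theorem~\ref{rhhtri} to the long exact sequence and apply the vanishing $\NHH^k = 0$ for $k < h$ coming from the definition of the height (via the identification $\NHH^\bullet(\CB,X)\cong\NHH^\bullet(\SCE,\SCD)$). Nothing further is needed.
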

\begin{proof}
The long exact sequence of cohomology groups of the triangle~\eqref{rhh} gives
\begin{equation*}
\dots \to \NHH^k(\SCE,\SCD) \to \HOH^k(X) \to \HOH^k(\CA) \to \NHH^{k+1}(\SCE,\SCD) \to \dots
\end{equation*}
For $k \le h-2$ both extremal terms vanish, hence the middle arrow is an isomorphism. For $k = h-1$
the left term vanishes, hence the middle arrow is a monomorphism.
\end{proof}

\begin{corollary}
The height of an exceptional collection is invariant under mutations.
\end{corollary}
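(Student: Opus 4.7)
The plan is to reduce the statement to the lemma asserting that $\NHH^\bullet(\SCE,\SCD) \cong \NHH^\bullet(\SCB,\SCD)$, using the well-known fact that mutations preserve the triangulated subcategory generated by an exceptional collection.

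First I would recall that if $E_1,\dots,E_n$ is an exceptional collection and $E_1',\dots,E_n'$ is obtained from it by a sequence of elementary mutations (left or right mutations in adjacent pairs), then the triangulated subcategory generated by the two collections coincides:
\begin{equation*}
\CB := \langle E_1,\dots,E_n\rangle = \langle E_1',\dots,E_n'\rangle \subset \BD^b(\coh(X)).
\end{equation*}
This is the standard property of mutations and requires no new input here.

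Next, let $\SCE,\SCE' \subset \SCD$ be the DG-subcategories of the \v{C}ech enhancement $\SCD = \SCD_X$ whose objects are the members of the two exceptional collections, and let $\SCB \subset \SCD$ be the full DG-subcategory consisting of all objects lying in $\CB$. By the lemma in the previous subsection, the restriction of DG-bimodules induces equivalences $\BD(\SCB^\opp\otimes\SCB) \cong \BD(\SCE^\opp\otimes\SCE)$ and $\BD(\SCB^\opp\otimes\SCB) \cong \BD((\SCE')^\opp\otimes\SCE')$ compatible with tensor products and sending the diagonal and the normal bimodules to their counterparts. Therefore
\begin{equation*}
\NHH^\bullet(\SCE,\SCD) \cong \NHH^\bullet(\SCB,\SCD) \cong \NHH^\bullet(\SCE',\SCD),
\end{equation*}
as graded vector spaces.

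Finally, since the height of an exceptional collection is defined as the smallest degree in which its normal Hochschild cohomology is nonzero, and these graded vector spaces coincide, we conclude $\he(E_1,\dots,E_n) = \he(E_1',\dots,E_n')$, proving the corollary. There is no real obstacle: the argument is essentially a formal consequence of the DG-categorical independence lemma combined with the basic invariance of the generated subcategory under mutation; the only thing to verify is that one needs not any nontrivial compatibility of mutations with the normal bimodule itself, because the subcategory $\SCB$ already factors out the choice of exceptional generators.
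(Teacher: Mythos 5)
Your argument is correct, but it takes a different route from the paper's. You work on the $\CB$ side: mutations preserve the generated subcategory $\CB=\langle E_1,\dots,E_n\rangle$, and the lemma identifying $\NHH^\bullet(\SCE,\SCD)$ with $\NHH^\bullet(\SCB,\SCD)$ shows that the normal Hochschild cohomology --- and hence the height, which is defined directly from it --- depends only on $\SCB$ and not on the chosen exceptional generators. The paper instead works on the $\CA$ side: mutations change neither the orthogonal complement $\CA$ nor its embedding, so the restriction morphism $\HOH^\bullet(X)\to\HOH^\bullet(\CA)$ is unchanged, and the height is recovered from that morphism via the triangle of Theorem~\ref{rhhtri}. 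The two are essentially dual uses of the same triangle; your version is arguably the more direct one, since it invokes only the definition of the height and the already-proved independence lemma, whereas the paper's one-line justification (``the height is the maximal $h$ for which Theorem~\ref{hl} holds'') tacitly requires extracting the vanishing of $\NHH^k$ from the long exact sequence rather than from the bare statement of that theorem. The only point you should make explicit is that the mutated objects $E_1',\dots,E_n'$ again form an exceptional collection lying in $\SCB$, so the independence lemma applies verbatim to $\SCE'$; this is standard and causes no difficulty.
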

\begin{proof}
Mutations of the collection change neither the subcategory $\CA$, nor its embedding into $\BD^b(\coh(X))$.
Therefore, the morphism $\HOH^\bullet(X) \to \HOH^\bullet(\CA)$ does not change. But the height is nothing
but the maximal integer $h$ for which the statement of Theorem~\ref{hl} is true.
\end{proof}

The only drawback of the notion of height is that it may be difficult to compute. A priori its computation
requires understanding of the higher multiplications in the $\Ext$ algebra of the exceptional collection.
Below we suggest a coarser invariant, the pseudoheight, which is much easier to compute but still gives 
some control of the Hochschild cohomology.

For any two objects $F,F' \in \BD(X)$ we define their {\sf relative height} as
\begin{equation*}
\se(F,F') = \min \{ k \ |\ \Ext^k(F,F') \ne 0 \}.
\end{equation*}

\begin{definition}
The {\sf pseudoheight} of the exceptional collection $E_1,\dots,E_n$ is defined as
\begin{equation*}
\ph(E_1,\dots,E_n) = \min_{1\le a_0 < a_1< \dots < a_p \le n} \Big( \se(E_{a_0},E_{a_1})+\dots+\se(E_{a_{p-1}},E_{a_p})+\se(E_{a_p},S^{-1}(E_{a_0})) - p  \Big).
\end{equation*}
The minimum is taken over the set of all chains of indices $1 \le a_0 < a_1 < \dots < a_p \le n$.
Note that the {\sf length} $p$ of a chain enters nontrivially into the expression under the minimum. 
\end{definition}

The pseudoheight gives a lower bound for the height.

\begin{lemma}\label{hph}
We have $\he(E_1,\dots,E_n) \ge \ph(E_1,\dots,E_n)$.
\end{lemma}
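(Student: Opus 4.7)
The plan is to exploit the normal Hochschild spectral sequence constructed in the previous subsection, whose first page $\BE_1(\SCH)^{-p,q}$ was described explicitly in~\eqref{ehpq} and which converges to $\NHH^\bullet(\SCE,\SCD)$ placed in total degree $q - p$. Since the chains $1 \le a_0 < \dots < a_p \le n$ are strictly increasing, one has $p \le n-1$, so the filtration by the $p$-direction is bounded; hence the spectral sequence converges, and each $\BE_\infty^{-p,q}$ is a subquotient of $\BE_1^{-p,q}$. It therefore suffices to show that $\BE_1^{-p,q} = 0$ whenever $q-p < \ph(E_1,\dots,E_n)$.

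To verify this vanishing, I would argue directly from the formula~\eqref{ehpq} for the first page. If a summand indexed by a chain $a_0 < \dots < a_p$ and by degrees $(k_0,\dots,k_p)$ with $k_0 + \dots + k_p = q$ is nonzero, then each tensor factor must be nonzero, which by the definition of relative height forces
\begin{equation*}
k_i \ge \se(E_{a_i},E_{a_{i+1}}) \quad (0 \le i \le p-1), \qquad k_p \ge \se(E_{a_p}, S^{-1}(E_{a_0})).
\end{equation*}
Summing these inequalities gives
\begin{equation*}
q = k_0 + \dots + k_p \ge \se(E_{a_0},E_{a_1}) + \dots + \se(E_{a_{p-1}},E_{a_p}) + \se(E_{a_p}, S^{-1}(E_{a_0})),
\end{equation*}
hence the total degree satisfies
\begin{equation*}
q - p \ge \se(E_{a_0},E_{a_1}) + \dots + \se(E_{a_{p-1}},E_{a_p}) + \se(E_{a_p}, S^{-1}(E_{a_0})) - p \ge \ph(E_1,\dots,E_n),
\end{equation*}
the last inequality being the definition of the pseudoheight as a minimum over all such chains.

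Putting everything together, any nonzero term of $\BE_1(\SCH)^{-p,q}$ lies in total degree $\ge \ph$, so the same holds for $\BE_\infty^{-p,q}$, and by convergence $\NHH^k(\SCE,\SCD) = 0$ for $k < \ph(E_1,\dots,E_n)$. By the definition of $\he$, this gives $\he(E_1,\dots,E_n) \ge \ph(E_1,\dots,E_n)$. There is essentially no obstacle here: the argument is a bookkeeping of degrees in the spectral sequence of the bicomplex $\SCH^{\bullet,\bullet}$, and the only point that deserves attention is the boundedness of the $p$-filtration needed for convergence, which is automatic since $p$ is bounded by $n-1$.
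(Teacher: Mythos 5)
Your proof is correct and is essentially the paper's own argument: the paper likewise observes that the pseudoheight is by definition the minimal total degree of a nonzero term on the first page of the normal Hochschild spectral sequence, and concludes by convergence that $\NHH^k(\SCE,\SCD)=0$ for $k<\ph$. You merely spell out the degree bookkeeping and the boundedness of the filtration, which the paper leaves implicit.
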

\begin{proof}
The pseudoheight is the minimal total degree of nontrivial terms of the first page of the normal Hochschild spectral sequence~\eqref{ehpq}.
Since the spectral sequence converges to normal Hochschild cohomology $\NHH^\bullet(\SCE,\SCD)$, we conclude that the latter is zero in degrees
$k < \ph(E_1,\dots,E_n)$. Therefore $\he(E_1,\dots,E_n) \ge \ph(E_1,\dots,E_n)$.
\end{proof}

Since the pseudoheight is not greater than the height, it gives the same restriction on the morphism of Hochschild cohomology.

\begin{corollary}\label{phl}
Let $h = \ph(E_1,\dots,E_n)$ be the pseudoheight of an exceptional collection $E_1,\dots,E_n$ and
let $\CA$ be its orthogonal complement~\eqref{ca}. The canonical restriction morphism
$\HOH^k(X) \to \HOH^k(\CA)$ is an isomorphism for $k \le h-2$ and a monomorphism for $k = h-1$.
\end{corollary}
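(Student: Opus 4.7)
The plan is very short: this corollary follows almost immediately by combining Lemma~\ref{hph} with Theorem~\ref{hl}, the analogous statement for the height. No new spectral sequence or bimodule computation is needed.

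First I would set $h_0 := \he(E_1,\dots,E_n)$ and $h := \ph(E_1,\dots,E_n)$, and recall from Lemma~\ref{hph} that $h_0 \ge h$. By Theorem~\ref{hl} applied to the exceptional collection, the restriction morphism $\HOH^k(X) \to \HOH^k(\CA)$ is an isomorphism for all $k \le h_0 - 2$ and a monomorphism for $k = h_0 - 1$.

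Next I would read off the claimed bounds by comparing $h$ with $h_0$. For the isomorphism range, any $k \le h - 2$ satisfies $k \le h_0 - 2$ (since $h \le h_0$), so the morphism is an isomorphism. For the monomorphism claim at $k = h - 1$, split into two cases. If $h < h_0$, then $h - 1 \le h_0 - 2$ and the morphism is already an isomorphism at $k = h-1$, hence in particular a monomorphism. If $h = h_0$, then $k = h - 1 = h_0 - 1$, and the monomorphism assertion is precisely the conclusion of Theorem~\ref{hl} at that degree.

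There is no real obstacle here; the only thing to watch is the case split at $k = h - 1$ (ensuring that whether the pseudoheight equals or is strictly smaller than the height, one lands in one of the two conclusions of Theorem~\ref{hl}). Everything follows formally from the long exact sequence underlying Theorem~\ref{rhhtri}, which was already used to prove Theorem~\ref{hl}, together with the vanishing of $\NHH^k(\SCE,\SCD)$ for $k < h$ provided by the convergence of the normal Hochschild spectral sequence in Lemma~\ref{hph}.
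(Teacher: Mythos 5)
Your proposal is correct and is exactly the paper's argument: the corollary is deduced by combining Lemma~\ref{hph} ($\he \ge \ph$) with Theorem~\ref{hl}, the paper simply noting that a smaller lower bound gives the same (or weaker) conclusion. Your explicit case split at $k = h-1$ is a careful spelling-out of what the paper leaves implicit.
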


Somtimes one can easily show that the pseudoheight equals the height.

\begin{proposition}\label{heph}
Assume that the pseudoheight $\ph(E_1,\dots,E_n)$ is achieved on a chain of length $0$, i.e.\
$\ph(E_1,\dots,E_n) = \se(E_i,S^{-1}(E_i))$ for some $i$. Then
$\he(E_1,\dots,E_n) = \ph(E_1,\dots,E_n)$.
\end{proposition}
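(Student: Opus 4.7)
The plan is to strengthen the inequality $\he \ge \ph$ of Lemma~\ref{hph} to an equality under the extra hypothesis. Writing $h = \ph(E_1,\dots,E_n)$, it suffices to exhibit a nonzero class in $\NHH^h(\SCE,\SCD)$, and my candidate will be the term $\Ext^h(E_i,S^{-1}(E_i))$ sitting inside the $p=0$ column of the first page~\eqref{ehpq} of the normal Hochschild spectral sequence, for the specific index $i$ on which the pseudoheight is achieved.

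First I would record that, by the very definition of the pseudoheight, the term $\BE_1(\SCH)^{-p,q}$ vanishes whenever the total degree $-p+q$ is strictly less than $h$; since every higher page $\BE_r$ is a subquotient of $\BE_1$, this vanishing region persists throughout the spectral sequence. Next, by the hypothesis $h = \se(E_i, S^{-1}(E_i))$, so the summand $\Ext^h(E_i,S^{-1}(E_i))$ of $\BE_1^{0,h} = \bigoplus_{i=1}^n \Ext^h(E_i, S^{-1}(E_i))$ is nonzero.

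The heart of the argument is to show that no differential can disturb this class. The differential $d_r$ of the bicomplex $\SCH$ has bidegree $(r,1-r)$. The outgoing map from $\BE_r^{0,h}$ lands in $\BE_r^{r, h-r+1}$; but the first coordinate $r\ge 1$ falls outside the support $-p\le 0$ of $\SCH$, so this map is zero for trivial reasons. The incoming map into $\BE_r^{0,h}$ comes from $\BE_r^{-r,\, h+r-1}$, a bidegree of total degree $h-1$, which vanishes by the pseudoheight bound recorded above. Consequently $\BE_r^{0,h} = \BE_1^{0,h}$ for every $r\ge 1$, the class survives to $\BE_\infty^{0,h}$, and $\NHH^h(\SCE,\SCD)$ acquires this as a nonzero subquotient. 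This gives $\he(E_1,\dots,E_n) \le h$, and combined with Lemma~\ref{hph} the equality follows.

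I do not anticipate a real obstacle: the only subtlety is making sure the pseudoheight bound propagates from $\BE_1$ across all pages, which is immediate from the subquotient structure, and noticing that a length-$0$ chain sits at a corner of the bicomplex which is inaccessible to the spectral sequence differentials --- one neighboring bidegree leaves the support of $\SCH$, the other is forced to vanish by the pseudoheight bound.
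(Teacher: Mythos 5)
Your argument is correct and coincides with the paper's own proof: both isolate the summand $\Ext^h(E_i,S^{-1}(E_i))\subset\BE_1(\SCH)^{0,h}$, kill the outgoing differentials because their targets leave the support $-p\le 0$ of the bicomplex, kill the incoming ones by the minimality of the total degree $h$ on the first page, and conclude that the class survives to $\BE_\infty$, giving $\he\le\ph$ and hence equality via Lemma~\ref{hph}. No further comment is needed.
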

\begin{proof}
Let $k = \ph(E_1,\dots,E_n)$ and assume that $\se(E_i,S^{-1}(E_i)) = k$.
Note that we have an inclusion $\Ext^k(E_i,S^{-1}(E_i)) \subset \BE_1(\SCH)^{0,k}$, hence all the higher differentials
of the normal Hochschild spectral sequence vanish on this space (just because the higher differentials increase $p$ and $\SCH$ 
is concentrated in nonpositive degrees with respect to $p$). On the other hand, all differentials increase the total degree $p+q$
and by assumption $k$ is the minimal total degree of nonzero elements of the first page of the spectral sequence.
Hence no nontrivial differentials of the spectral sequence have target at $\Ext^k(E_i,S^{-1}(E_i))$,
hence it embedds into $\NHH^k(\SCE,\SCD)$. Thus $\he(E_1,\dots,E_n) \le k = \ph(E_1,\dots,E_n)$.
On the other hand we know that $\he(E_1,\dots,E_n) \ge \ph(E_1,\dots,E_n)$ by Lemma~\ref{hph}. Thus $\he(E_1,\dots,E_n) = \ph(E_1,\dots,E_n)$.
\end{proof}

\subsection{Formal deformation spaces}

We refer to~\cite{KS} for generalities about deformation theory of $A_\infty$-algebras and categories.
Recall that the second Hochschild cohomology is the tangent space to the deformation space of a category
and the third Hochschild cohomology is the space of obstructions. By Theorem~\ref{hl} 
if $\he(E_1,\dots,E_n) \ge 4$ then the map $\HOH^2(X) \to \HOH^2(\CA)$ is an isomorphism 
and the map $\HOH^3(X) \to \HOH^3(\CA)$ is a monomorphism. As a consequence we have

\begin{proposition}
If $\he(E_1,\dots,E_n) \ge 4$ then the formal deformation spaces of categories $\BD(X)$ and $\CA$ are isomorphic.
%  to the formal deformation space of $\CA$.
\end{proposition}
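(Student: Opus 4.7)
The plan is to reduce the statement to standard formal deformation theory of $A_\infty$-categories in the Kontsevich--Soibelman framework. Recall that the formal deformation space of a smooth proper DG-category $\SCD$ is the Maurer--Cartan functor of the shifted Hochschild cochain complex $C^{\bullet+1}(\SCD)$ viewed as a DG Lie (equivalently, $L_\infty$) algebra; its tangent space is $\HOH^2(\SCD)$ and its obstruction space is $\HOH^3(\SCD)$. The same holds for any admissible subcategory $\CA$ equipped with the induced enhancement.

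The first step is to upgrade the restriction morphism $\HOH^\bullet(X) \to \HOH^\bullet(\CA)$ of Theorem~\ref{rhhtri} to a morphism of $L_\infty$-algebras between $C^{\bullet+1}(\SCD_X)$ and $C^{\bullet+1}(\SCA)$. This is a well-known compatibility: the functorial behavior of Hochschild cochains under the inclusion of an admissible DG-subcategory (or equivalently, under the projection to $\CA$) preserves the Gerstenhaber bracket, and the isomorphism of Theorem~\ref{rhhtri} is induced by this $L_\infty$-morphism on cohomology.

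The second step is to invoke the hypothesis $\he(E_1,\dots,E_n)\ge 4$. By Theorem~\ref{hl}, the restriction morphism is an isomorphism on $\HOH^k$ for $k\le 2$ and a monomorphism on $\HOH^3$. Translating back to the shifted complexes, this $L_\infty$-morphism induces isomorphisms on $H^k$ for $k\le 1$ and an injection on $H^2$. By the standard criterion of formal deformation theory (a Goldman--Millson / Manetti-type statement in characteristic zero), an $L_\infty$-morphism with these properties induces an isomorphism of the associated formal deformation functors: bijectivity on $H^1$ identifies first-order deformations, injectivity on $H^2$ guarantees that an obstruction in $\HOH^3(\CA)$ vanishes iff the corresponding obstruction in $\HOH^3(X)$ does, so deformations of $\CA$ lift to all orders in bijection with deformations of $X$, and bijectivity on $H^0$ matches the gauge equivalences.

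The main obstacle is the first step: verifying that the morphism of Theorem~\ref{rhhtri} is genuinely the map on $H^\bullet$ of an $L_\infty$-morphism of shifted Hochschild complexes, rather than merely a morphism of graded vector spaces. Once this compatibility is in place, the proposition follows immediately from the cited deformation-theoretic criterion applied to the inequalities provided by Theorem~\ref{hl}.
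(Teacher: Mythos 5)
Your proposal is correct and follows essentially the same route as the paper: the paper likewise reduces to the fact that the restriction morphism $\HOH^\bullet(X)\to\HOH^\bullet(\CA)$ respects the deformation-controlling structure on Hochschild cochains (the paper phrases this as compatibility with the Gerstenhaber multiplication and bracket, asserted to be clear from the explicit formulas) and then concludes from the isomorphism in degrees $\le 2$ and the monomorphism in degree $3$ supplied by Theorem~\ref{hl}. Your version is in fact slightly more careful in flagging that the compatibility must hold at the level of an $L_\infty$-morphism of shifted cochain complexes, which the paper treats as evident.
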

\begin{proof}
The formal deformation space of a DG-category $\SCD$ is described in terms of the Gerstenhaber algebra structure 
on the Hochschild cohomology of $\SCD$. To be more precise only the cohomology in degrees up to $3$ play role.
Now let $\SCD$ be the \v{C}ech enhancement of $\BD(X)$ and $\SCA$ the induced enhancement of~$\CA$. 
The restriction morphism $\HOH^\bullet(\SCD) \to \HOH^\bullet(\SCA)$ is a morphism of Gerstenhaber
algebras (this is clear from the explicit formulas for the multiplication and the bracket)
which is an isomorphism in degrees up to $2$ and a monomorphism in degree $3$. Therefore
the formal deformation spaces are isomorphic.
\end{proof}

% The natural restriction morphism
% $\HOH^\bullet(X) \to \HOH^\bullet(\CA)$ is compatible with all the natural structures of Hochschild cohomology.
% In particular, it is compatible with the $L_\infty$ structure in terms of which the deformation
% space is described. It follows that when the map $\HOH^2(X) \to \HOH^2(\CA)$ is an isomorphism
% and the map $\HOH^3(X) \to \HOH^3(\CA)$ is a monomorphism, the deformation spaces of $\BD(X)$ and $\CA$ are isomorphic.
% In particular we see that this is the case for Burniat, Beauville and classical Godeaux surfaces.

\subsection{Anticanonical pseudoheight}

Sometimes it is more convenient to replace the inverse Serre functor in the definition of the height
and pseudoheight by the anticanonical twist. Of course, this just shifts the result by $\dim X$.

\begin{definition}
The {\sf anticanonical pseudoheight} of an exceptional collection $E_1,\dots,E_n$ is defined as 
\begin{equation*}
\ph_\ac(E_1,\dots,E_n) = \min_{1\le a_0 < a_1< \dots < a_p \le n} \Big( \se(E_{a_0},E_{a_1})+\dots+\se(E_{a_{p-1}},E_{a_p})+\se(E_{a_p},E_{a_0}\otimes\omega_X^{-1}) - p  \Big).
\end{equation*}
Thus $\ph_\ac = \ph - \dim X$.
The {\sf anticanonical height} 
% of an exceptional collection $E_1,\dots,E_n$ 
is defined as
% \begin{equation*}
$\he_\ac(E_1,\dots,E_n) = \he(E_1,\dots,E_n) - \dim X$.
% \min \{ k \in \ZZ\ |\ \NHH^k(\SCE,\SCD)[\dim X] \ne 0 \},
% \end{equation*}
% where $\SCE$ is the DG-category generated by $E_1,\dots,E_n$. 
\end{definition}
% \begin{equation*}
% \he_\ac = \he - \dim X,
% \qquad
% \ph_\ac = \ph - \dim X,
% \end{equation*}
% so their computation is absolutely equivalent. However, as we will shortly see sometimes
% the anticanonical (psudo)height is more convenient. 

Let $E_1,\dots,E_n$ be an exceptional collection. The collection
\begin{equation*}
E_1,\dots,E_n,E_1\otimes\omega_X^{-1},\dots,E_n\otimes\omega_X^{-1}
\end{equation*}
will be called the {\sf (anticanonically) extended collection}.
We will say that the extended collection is {\sf $\Hom$-free} if $\Ext^p(E_i,E_j) = 0$ for $p \le 0$ and all $1\le i < j \le i+n$.
A $\Hom$-free collection is called {\sf cyclically $\Ext^1$-connected} if there is a chain $1 \le a_0 < a_1 < \dots < a_{p-1} < a_p \le n$ such that 
$\Ext^1(E_{a_s},E_{a_{s+1}}) \ne 0$ for all $s = 0,1,\dots,p-1$ and $\Ext^1(E_{a_p},E_{a_0}\otimes\omega_X^{-1}) \ne 0$.

\begin{lemma}\label{h12}
If $E_1,\dots,E_n$ is an exceptional collection such that the anticanonically extended collection is $\Hom$-free 
then $\ph_\ac(E_1,\dots,E_n) \ge 1$ and $\ph(E_1,\dots,E_n) \ge 1 + \dim X$.
If, in addition, the extended collection is not cyclically $\Ext^1$-connected then $\ph_\ac(E_1,\dots,E_n) \ge 2$
% Consequently, $\he(E_1,\dots,E_n) \ge 1 + \dim X$ 
and $\ph(E_1,\dots,E_n) \ge 2 + \dim X$.
\end{lemma}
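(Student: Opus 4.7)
The plan is to reduce both inequalities to direct estimates on the terms appearing under the minimum in the definition of $\ph_\ac$. Since $S^{-1}(F) = F\otimes\omega_X^{-1}[-\dim X]$, for any pair we have $\se(F,S^{-1}(F')) = \se(F,F'\otimes\omega_X^{-1}) + \dim X$, so the identity $\ph = \ph_\ac + \dim X$ already reduces the second assertion in each part to the corresponding bound on $\ph_\ac$. It remains to show $\ph_\ac \ge 1$ under the first hypothesis, and $\ph_\ac \ge 2$ under the additional hypothesis.

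For the first part, I would take an arbitrary chain $1 \le a_0 < a_1 < \dots < a_p \le n$ and check that each of the $p+1$ relative heights appearing in
\begin{equation*}
\se(E_{a_0},E_{a_1})+\dots+\se(E_{a_{p-1}},E_{a_p})+\se(E_{a_p},E_{a_0}\otimes\omega_X^{-1})
\end{equation*}
involves a pair of indices $(i,j)$ in the extended collection satisfying $i < j \le i+n$. For the pairs $(a_s,a_{s+1})$ with $s<p$ this is immediate since $a_{s+1}-a_s \le n-1$. For the last pair we rewrite $E_{a_0}\otimes\omega_X^{-1}$ as $E_{a_0+n}$ in the extended labeling, and note that $a_p < a_0+n$ (because $a_p-a_0 \le n-1$) and $a_0 + n \le a_p + n$ (because $a_0 \le a_p$). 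The Hom-free hypothesis then gives $\se \ge 1$ for each of the $p+1$ terms. Summing and subtracting $p$ yields $\ph_\ac \ge 1$.

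For the second part, the additional hypothesis that the extended collection is not cyclically $\Ext^1$-connected means that for every chain $1 \le a_0 < \dots < a_p \le n$, at least one of the groups $\Ext^1(E_{a_s},E_{a_{s+1}})$ (for some $s<p$) or $\Ext^1(E_{a_p},E_{a_0}\otimes\omega_X^{-1})$ vanishes. Combined with the Hom-free condition (which forces $\Ext^k=0$ for $k\le 0$ on the same pair), the corresponding $\se$ jumps from $\ge 1$ to $\ge 2$. Thus every chain contributes a total of at least $(p+1)+1 = p+2$, so $\ph_\ac \ge 2$.

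There is no real obstacle here beyond bookkeeping: one must verify that both the pairs $(a_s,a_{s+1})$ for $s<p$ and the wraparound pair $(a_p,a_0+n)$ fall within the index range $1 \le i < j \le i+n$ governing the Hom-free condition, and one must handle the degenerate chain of length $p=0$, where the sum reduces to the single term $\se(E_{a_0},E_{a_0}\otimes\omega_X^{-1})$ and the argument above applies verbatim with the Hom-free/cyclic $\Ext^1$ conditions specialized to the diagonal pair $(a_0, a_0+n)$.
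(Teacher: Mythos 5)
Your proof is correct and follows essentially the same argument as the paper: each relative height in the chain is at least $1$ by the $\Hom$-free condition, and the failure of cyclic $\Ext^1$-connectivity forces at least one summand to be at least $2$. The extra bookkeeping you do (verifying the index ranges for the wraparound pair and the degenerate chain $p=0$) is implicit in the paper's proof and is carried out correctly.
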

\begin{proof}
If the extended collection is $\Hom$-free then $\se(E_{a_i},E_{a_{i+1}}) \ge 1$ and $\se(E_{a_p},E_{a_0}\otimes\omega_X^{-1}) \ge 1$, hence 
\begin{equation*}
\se(E_{a_0},E_{a_1}) + \dots + \se(E_{a_{p-1}},E_{a_p}) + \se(E_{a_p},E_{a_0}\otimes\omega_X^{-1}) - p \ge (p+1) - p = 1,
\end{equation*}
hence the anticanonical pseudoheight is not less than $1$. If, moreover, the extended collection is not cyclically $\Ext^1$-connected then in the sum
$\se(E_{a_0},E_{a_1}) + \dots + \se(E_{a_{p-1}},E_{a_p}) + \se(E_{a_p},E_{a_0}\otimes\omega_X^{-1})$ at least one summand is at least $2$, 
hence the sum is at least $p + 2$, hence the LHS above is at least $2$, hence the anticanonical pseudoheight is not less than $2$.
\end{proof}

\section{Examples}\label{s-ex}

In this section we provide several examples which show that the anticanonical height is easily computable.
All these examples deal with quasiphantom categories constructed recently in the derived categories of 
some surfaces of general type.
In all examples below we will use the following simple observation.

\begin{lemma}\label{h0}
Let $X$ be a surface with ample canonical class $K_X$ and $(L_1,L_2)$ a pair of line bundles. If 
\begin{equation*}
c_1(L_1)\cdot K_X \ge c_1(L_2)\cdot K_X, 
\end{equation*}
then $\Hom(L_1,L_2) = 0$ unless $L_1 \cong L_2$.
\end{lemma}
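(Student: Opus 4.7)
The plan is straightforward and rests on interpreting $\Hom(L_1,L_2)$ as a space of sections. Setting $M := L_2 \otimes L_1^{-1}$, one has
\begin{equation*}
\Hom(L_1,L_2) = H^0(X, M),
\end{equation*}
so any nonzero element of $\Hom(L_1,L_2)$ produces an effective divisor $D$ on $X$ with $\CO_X(D) \cong M$.

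Next I would intersect $D$ with $K_X$ and use ampleness. Since $K_X$ is ample, $K_X \cdot C > 0$ for every irreducible curve $C \subset X$ by the Nakai--Moishezon criterion. Writing $D = \sum a_i C_i$ with $a_i \ge 0$ (effectivity), this gives $K_X \cdot D = \sum a_i (K_X \cdot C_i) \ge 0$, with equality if and only if $D = 0$. On the other hand, by construction
\begin{equation*}
K_X \cdot D = c_1(M) \cdot K_X = c_1(L_2) \cdot K_X - c_1(L_1) \cdot K_X \le 0,
\end{equation*}
where the last inequality is the hypothesis. Combining these forces $K_X \cdot D = 0$, hence $D = 0$, hence $M \cong \CO_X$, i.e.\ $L_1 \cong L_2$.

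There is no real obstacle here; the only subtlety is making sure to apply ampleness correctly (positivity against every effective nonzero curve class). If $L_1 \not\cong L_2$, then any nonzero section of $M$ must vanish on a nonempty divisor, and the positivity of $K_X$ on effective cycles contradicts the numerical hypothesis. This gives the desired vanishing $\Hom(L_1,L_2) = 0$.
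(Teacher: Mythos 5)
Your proof is correct and is essentially the same as the paper's: interpret a nonzero morphism as a section of $L_1^{-1}\otimes L_2$, hence an effective divisor $D$, and use ampleness of $K_X$ to force $D\cdot K_X>0$ unless $D=0$, contradicting the numerical hypothesis. The extra detail you supply about Nakai--Moishezon and decomposing $D$ into irreducible components is just a fuller justification of the same step.
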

\begin{proof}
A nonzero morphism $L_1 \to L_2$ gives a section of the line bundle $L_1^{-1}\otimes L_2$, hence
$L_1^{-1}\otimes L_2 \cong \CO_X(D)$ for an effective divisor $D$. Since $K_X$ is ample we have $D\cdot K_X > 0$
unless $D = 0$. This proves the Lemma.
\end{proof}

% Throughout the section we consider the anticanonically extended exceptional collections and the anticanonical (pseudo)height.
% All exceptional collections in this section consist of line bundles. We will also use the following observation.

Another useful observation is the following

\begin{lemma}\label{h2ac}
Let $X$ be a smooth projective surface with $H^2(X,\omega_X^{-1}) \ne 0$. If $E_1,\dots,E_n$ is an exceptional
collection consisting of line bundles then $\ph_\ac(E_1,\dots,E_n) \le 2$. Moreover, if
$\ph_\ac(E_1,\dots,E_n) = 2$ then $\he_\ac(E_1,\dots,E_n) = 2$ as well.
\end{lemma}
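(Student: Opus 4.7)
The plan is to test the definition of $\ph_\ac$ on chains of length $p=0$, exploit the hypothesis on $H^2(X,\omega_X^{-1})$, and then reduce the height statement to Proposition~\ref{heph}.

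First I would observe that for a chain of length $0$, i.e.\ just $a_0 = i$, the quantity under the minimum defining $\ph_\ac$ is simply
\begin{equation*}
\se(E_i, E_i\otimes\omega_X^{-1}).
\end{equation*}
Since each $E_i$ is a line bundle, we have $\Ext^k(E_i,E_i\otimes\omega_X^{-1}) = H^k(X,E_i^{-1}\otimes E_i\otimes\omega_X^{-1}) = H^k(X,\omega_X^{-1})$, so $\se(E_i,E_i\otimes\omega_X^{-1}) = \min\{k\,|\,H^k(X,\omega_X^{-1})\ne 0\}$. The hypothesis $H^2(X,\omega_X^{-1})\ne 0$ then immediately gives $\se(E_i,E_i\otimes\omega_X^{-1}) \le 2$, hence $\ph_\ac(E_1,\dots,E_n)\le 2$.

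For the second part, suppose $\ph_\ac(E_1,\dots,E_n) = 2$. Then the inequality just established must be an equality: for every $i$, $\se(E_i,E_i\otimes\omega_X^{-1}) \ge \ph_\ac = 2$, and combined with the upper bound above we get $\se(E_i,E_i\otimes\omega_X^{-1}) = 2$. In particular the minimum in the definition of $\ph_\ac$ is achieved on a chain of length $0$. Translating back to ordinary pseudoheight via $\ph = \ph_\ac + \dim X$ and using $\se(E_i,S^{-1}(E_i)) = \se(E_i,E_i\otimes\omega_X^{-1}) + \dim X$, the ordinary pseudoheight $\ph(E_1,\dots,E_n)$ is also realized on a chain of length $0$. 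Proposition~\ref{heph} then applies and yields $\he(E_1,\dots,E_n) = \ph(E_1,\dots,E_n)$; subtracting $\dim X$ from both sides gives $\he_\ac(E_1,\dots,E_n) = \ph_\ac(E_1,\dots,E_n) = 2$.

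There is really no hard step here: the only subtle point is the bookkeeping between $\ph$ and $\ph_\ac$ (and similarly for $\he$) when invoking Proposition~\ref{heph}, which is why it is convenient that the hypotheses are stated for the anticanonical version but the proposition is proved in the uncanonical one.
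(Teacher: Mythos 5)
Your proof is correct and follows exactly the paper's argument: bound $\ph_\ac$ by the length-zero chain contribution $\se(E_i,E_i\otimes\omega_X^{-1})=\min\{k\,|\,H^k(X,\omega_X^{-1})\ne 0\}\le 2$, and in the case of equality observe the pseudoheight is achieved on a chain of length $0$ so that Proposition~\ref{heph} applies. The only difference is that you spell out the bookkeeping between $\ph$ and $\ph_\ac$ that the paper leaves implicit.
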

\begin{proof}
Since $E_i$ is a line bundle we have $\Ext^k(E_i,E_i\otimes\omega_X^{-1}) = H^k(X,E_i^\vee\otimes E_i\otimes\omega_X^{-1}) = H^k(X,\omega_X^{-1})$ hence 
$\ph_\ac(E_1,\dots,E_n) \le \se(E_i,E_i\otimes\omega_X^{-1}) \le 2$. If the anticanonical pseudoheight is $2$ then by Proposition~\ref{heph}
the anticanonical height is also $2$.
\end{proof}

\subsection{Burniat surfaces}

Alexeev and Orlov in~\cite{AO} have constructed an exceptional collection of length~$6$ in the derived category of a Burniat surface 
(Burniat surfaces is a family of surfaces of general type with $p_g = q = 0$ and $K^2 = 6$). The orthogonal subcategory
$\CA\subset \BD(X)$ has trivial Hochschild homology and $K_0(\CA) = (\ZZ/2\ZZ)^6$.

\begin{proposition}
The height of the Alexeev--Orlov exceptional collection is $4$.
\end{proposition}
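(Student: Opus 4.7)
The strategy is to show that the pseudoheight of the Alexeev--Orlov collection equals $4$, and then invoke Proposition~\ref{heph} to lift this to an equality for the height. Equivalently, we aim to prove $\ph_\ac(E_1,\dots,E_6)=2$ and verify that this is achieved on a chain of length zero.

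\textbf{Upper bound.} The Burniat surface has ample canonical class with $K_X^2=6$ and $p_g=q=0$. Serre duality gives $H^2(X,\omega_X^{-1})^\vee \cong H^0(X,2K_X)$, which has dimension at least $\chi(2K_X)=1+K_X^2=7$. Since $K_X$ is ample, Kodaira vanishing gives $H^0(X,\omega_X^{-1})=H^1(X,\omega_X^{-1})=0$. For any line bundle $E_i$ we therefore obtain $\Ext^k(E_i,E_i\otimes\omega_X^{-1})=H^k(X,\omega_X^{-1})$, which first becomes nonzero at $k=2$. Thus $\se(E_i,E_i\otimes\omega_X^{-1})=2$, so Lemma~\ref{h2ac} applies and yields $\ph_\ac(E_1,\dots,E_6)\le 2$; moreover, the second statement of that lemma will automatically promote any equality to $\he_\ac=2$, i.e.\ $\he=4$.

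\textbf{Lower bound.} To show $\ph_\ac\ge 2$, I would apply Lemma~\ref{h12}: it suffices to verify that the anticanonically extended collection
\begin{equation*}
E_1,\dots,E_6,\ E_1\otimes\omega_X^{-1},\dots,E_6\otimes\omega_X^{-1}
\end{equation*}
is $\Hom$-free and not cyclically $\Ext^1$-connected. The collection constructed by Alexeev--Orlov consists of line bundles, so Lemma~\ref{h0} reduces $\Hom$-freeness to the inequalities $c_1(E_i)\cdot K_X \ge c_1(E_j)\cdot K_X$ for $i<j$ in the extended list (together with the obvious check that no two of the twelve line bundles coincide). Using $c_1(E_i\otimes\omega_X^{-1})\cdot K_X = c_1(E_i)\cdot K_X - 6$, this is a finite list of numerical verifications from the explicit formulas of~\cite{AO} for the divisor classes of the $E_i$ in the Picard group of~$X$.

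\textbf{Non-cyclic $\Ext^1$-connectedness.} This is the step where most of the real work takes place, and I expect it to be the main obstacle. One must compute, for each ordered pair $(E_i,E_j)$ with $i<j$ in the extended collection, the group $\Ext^1(E_i,E_j)=H^1(X,E_i^\vee\otimes E_j)$; this uses the structure of $\Pic(X)$ (which has torsion $(\ZZ/2)^6$) and the specific numerical classes of the differences $c_1(E_j)-c_1(E_i)$. One then forms the directed graph on the twelve vertices with arrows wherever $\Ext^1\neq 0$ and checks that no directed path starts at some $E_{a_0}$ and ends at the same object shifted by $\omega_X^{-1}$. Once this is done, Lemma~\ref{h12} gives $\ph_\ac\ge 2$, so $\ph_\ac=2$ and the minimum is achieved on the length-zero chain $E_i$. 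Proposition~\ref{heph} then concludes $\he=\ph=4$.
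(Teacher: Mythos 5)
Your overall strategy is the same as the paper's: establish $\ph_\ac\le 2$ via Lemma~\ref{h2ac} (using $H^2(X,\omega_X^{-1})\ne 0$), establish $\ph_\ac\ge 2$ via Lemma~\ref{h0} and Lemma~\ref{h12}, and conclude $\he_\ac=2$, i.e.\ $\he=4$. Your upper-bound paragraph is correct and in fact slightly more explicit than the paper's (the paper simply invokes Lemma~\ref{h2ac}; your observation that $\se(E_i,E_i\otimes\omega_X^{-1})=2$ exactly, via $H^0(X,\omega_X^{-1})=H^1(X,\omega_X^{-1})=0$ and $h^0(2K_X)\ge\chi(2K_X)=7$, is a clean way to see that the length-zero chain realizes the pseudoheight). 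The $\Hom$-freeness step also matches: the canonical degrees of the extended collection are $3,3,2,2,2,0;-3,-3,-4,-4,-4,-6$, which are non-increasing, so Lemma~\ref{h0} applies.

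The one place where your write-up falls short of a proof is the step you yourself flag as ``the main obstacle'': you reduce non-cyclic-$\Ext^1$-connectedness to building the full directed graph of nonvanishing $\Ext^1$'s among the twelve line bundles, but you do not actually compute any of these groups, so the lower bound is not established as written. The paper closes this gap much more cheaply than your plan suggests: by Lemma~4.8 of~\cite{AO} one has $\Ext^1(E_i,E_j)=0$ for all $i<j$ in the \emph{unextended} collection, so every chain with $p>0$ already fails at its first edge, and for $p=0$ the only thing to check is $\Ext^1(E_i,E_i\otimes\omega_X^{-1})=H^1(X,\omega_X^{-1})=0$, which you have already proved in your upper-bound paragraph. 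So no graph on twelve vertices is needed; citing that one vanishing result from~\cite{AO} completes your argument.
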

\begin{proof}
The collection consists of line bundles, the canonical degrees of the anticanonically extended collection are given by the following sequence
\begin{equation*}
3,3,2,2,2,0\ ;\ -3,-3,-4,-4,-4,-6,
\end{equation*}
the semicolon separates the extended part.
In particular, the degrees do not increase, so by Lemma~\ref{h0} the collection is $\Hom$-free. 
Therefore $\ph_\ac(E_1,\dots,E_6) \ge 1$ by Lemma~\ref{h12}. Let us also check
% 
% By Lemma~\ref{h12} it remains to check 
that the collection is not cyclically $\Ext^1$-connected. In other words for any chain $1 \le a_0 < \dots < a_p \le 6$ 
we have to check that either at least for one $i$ we have $\Ext^1(E_{a_i},E_{a_{i+1}}) = 0$ or $\Ext^1(E_{a_p},E_{a_0}\otimes\omega_X^{-1}) = 0$.
% By Remark~\ref{ik1} we may assume that $a_{k-1} \le n$. 
If $p > 0$ then $\Ext^1(E_{a_0},E_{a_1}) = 0$ by Lemma 4.8 of \cite{AO}.
If $p = 0$ then since $E_{a_0}$ is a line bundle we have
\begin{equation*}
\Ext^1(E_{a_0},E_{a_0}\otimes\omega_X^{-1}) = H^1(X,\omega_X^{-1})
\end{equation*}
and it is easy to see that it is also $0$. Thus $\ph_\ac(E_1,\dots,E_6) \ge 2$ by Lemma~\ref{h12}. 
By Lemma~\ref{h2ac} we conclude that $\ph_\ac(E_1,\dots,E_6) = \he_\ac(E_1,\dots,E_6) = 2$.
\end{proof}

\begin{corollary}
If $X$ is a Burniat surface then the natural restriction morphism $\HOH^k(X) \to \HOH^k(\CA)$ 
is an isomorphism for $k \le 2$ and a monomorphism for $k = 3$. In particular, the formal deformation
space of $\CA$ is isomorphic to that of $\BD(X)$.
\end{corollary}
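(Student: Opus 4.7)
The plan is essentially to read the corollary as a direct consequence of the previous Proposition combined with Theorem~\ref{hl} and the deformation-theoretic Proposition of the preceding subsection; no new work is required beyond converting between the anticanonical and ordinary heights.

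First I would observe that the previous Proposition establishes $\he_\ac(E_1,\dots,E_6) = 2$ for the Alexeev--Orlov collection on a Burniat surface. Since $\dim X = 2$ for a surface, the definition $\he_\ac = \he - \dim X$ gives
\begin{equation*}
\he(E_1,\dots,E_6) = \he_\ac(E_1,\dots,E_6) + \dim X = 2 + 2 = 4.
\end{equation*}

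Next I would apply Theorem~\ref{hl} with $h = 4$: this immediately yields that the canonical restriction map $\HOH^k(X) \to \HOH^k(\CA)$ is an isomorphism for $k \le h - 2 = 2$ and a monomorphism for $k = h - 1 = 3$, which is the first assertion of the corollary.

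Finally, for the statement about formal deformations I would invoke the Proposition of the preceding subsection: since $\he(E_1,\dots,E_6) = 4 \ge 4$, the formal deformation spaces of $\BD(X)$ and of $\CA$ are isomorphic. There is no genuine obstacle here — the work has already been done in establishing the height, and the corollary is just a packaging of the general machinery (Theorem~\ref{hl} plus the deformation-theoretic Proposition) applied to the concrete value $h = 4$.
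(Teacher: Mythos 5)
Your argument is correct and is exactly the intended derivation: the preceding Proposition gives $\he_\ac = 2$, hence $\he = 4$ for a surface, and then Theorem~\ref{hl} and the deformation-theoretic Proposition yield both assertions. The paper leaves the corollary without an explicit proof precisely because it is this immediate packaging of the general results.
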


\subsection{The Beauville surface}

Galkin and Shinder in~\cite{GS} have constructed six different exceptional collections of length $4$ in the derived category of the Beauville surface 
(Beauville surface is an example of a surface of general type with $p_g = q = 0$ and $K^2 = 8$). The orthogonal subcategory $\CA\subset \BD(X)$ 
to any of those has trivial Hochschild homology and $K_0(\CA) = (\ZZ/5\ZZ)^2$. We will consider the first
two exceptional collections (called $I_1$ and $I_0$ in Theorem~3.5 of loc.\ cit.), since the other four 
can be obtained from them by taking the anticanonically extended collection and then restricting to its
length 4 subcollections (which are automatically exceptional due to Serre duality). This operation clearly 
does not change the pseudoheight.

\begin{proposition}
The pseudoheight of the collection $I_1$ is $4$ and of the collection $I_0$ is $3$.
The height of both collections is $4$.
% of any of the Galkin--Shinder exceptional collections is $2$.
\end{proposition}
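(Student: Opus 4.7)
The Beauville surface $X$ has $p_g = q = 0$ and $K_X^2 = 8$, so Riemann--Roch together with Kodaira vanishing give $h^2(X,\omega_X^{-1}) = h^0(X,\omega_X^2) = \chi(\CO_X) + K_X^2 = 9 \ne 0$. Lemma~\ref{h2ac} thus supplies the upper bound $\ph_\ac \le 2$ for both line-bundle collections $I_0$ and $I_1$, along with the promotion $\ph_\ac = 2 \Longrightarrow \he_\ac = 2$. The plan is to read off, from the presentation of $X$ as a quotient $(C\times C')/(\ZZ/5)^2$ in~\cite{GS}, the cohomology groups $H^\bullet(X, E_j \otimes E_i^{-1})$ and $H^\bullet(X, E_j \otimes E_i^{-1}\otimes \omega_X^{-1})$ for every ordered pair in the anticanonically extended collection, and thereby to tabulate every relative height $\se$ that enters the pseudoheight minimum.

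For $I_1$, I would verify from these tables that the extended collection is $\Hom$-free and not cyclically $\Ext^1$-connected; Lemma~\ref{h12} then forces $\ph_\ac(I_1)\ge 2$, so by the upper bound $\ph_\ac(I_1) = 2$, i.e.\ $\ph(I_1) = 4$, and Lemma~\ref{h2ac} yields $\he(I_1) = 4$. For $I_0$, the tables should instead show $\Hom$-freeness \emph{and} exhibit at least one chain $1\le a_0 < \dots < a_p \le 4$ along which every $\se$ equals $1$, i.e.\ the extended collection \emph{is} cyclically $\Ext^1$-connected; Lemma~\ref{h12} then gives $\ph_\ac(I_0) \ge 1$ and the chain saturates the bound, so $\ph_\ac(I_0) = 1$ and $\ph(I_0) = 3$.

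The delicate point is promoting $\he(I_0) \ge 3$ to the asserted $\he(I_0) = 4$: here Lemma~\ref{h2ac} does not apply since its hypothesis $\ph_\ac = 2$ fails, so one must show $\NHH^3(\SCE,\SCD) = 0$ directly. Using the Ext tables I would list the nonzero summands of $\BE_1^{-p,q}$ in~\eqref{ehpq} with total degree $q - p = 3$; since $H^1(X,\omega_X^{-1}) = 0$ there are no contributions from length-$0$ chains, so every such class arises from a chain of length $p \ge 1$ with a restricted list of admissible degree patterns $(k_0,\dots,k_p)$. One would then verify, via Yoneda multiplication of $\Ext^1$-classes on $X$, that the sub-bicomplex of $\SCH^{\bullet,\bullet}$ concentrated in total degree~$3$ is acyclic on the $\BE_1$-page under $d_1$ --- either by exhibiting a degree-$2$ preimage for each cocycle, or by showing $d_1$ is injective on it. These cup-product checks reduce, via pullback to $C\times C'$ and descent of $(\ZZ/5)^2$-invariants, to K\"unneth-type calculations on the two curves; this explicit Yoneda computation is the one nontrivial ingredient of the proof, and is the step I expect to be the main obstacle, since unlike the previous paragraphs it requires not only vanishing/nonvanishing of Ext groups but also precise control of the multiplication $m_2$.
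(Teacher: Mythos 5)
Your overall strategy matches the paper's: Lemma~\ref{h0} and the canonical degrees give $\Hom$-freeness, Lemma~\ref{h12} plus the (non-)cyclic-$\Ext^1$-connectedness of the extended collections give $\ph_\ac(I_1)=2$ and $\ph_\ac(I_0)=1$, Lemma~\ref{h2ac} settles $\he(I_1)=4$, and for $I_0$ one kills the total-degree-$3$ part of the normal Hochschild spectral sequence by an explicit Yoneda product. (The paper does not recompute the Ext tables from the quotient construction; it reads the degrees off and cites the character matrices of \cite{GS}, and it observes that the only degree-$3$ term of $\BE_1(\SCH)$ is the single summand $\Ext^1(E_1,E_2)\otimes\Ext^1(E_2,E_3)\otimes\Ext^1(E_3,E_4)\otimes\Ext^3(E_4,S^{-1}(E_1))\subset\BE_1(\SCH)^{-3,6}$, on which injectivity of $d_1$ follows from the single nonvanishing product $\Ext^1(E_1,E_2)\otimes\Ext^1(E_2,E_3)\to\Ext^2(E_1,E_3)$. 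Note also that since $\ph(I_0)=3$ there are no degree-$2$ terms, so of your two options only injectivity of $d_1$ is available.)

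There is one genuine omission: your argument for $I_0$ only establishes $\he(I_0)\ge 4$, not the asserted equality. Since $\ph_\ac(I_0)=1$, the ``moreover'' clause of Lemma~\ref{h2ac} is unavailable, so the upper bound $\he(I_0)\le 4$ does not come for free; you must additionally show $\NHH^4(\SCE,\SCD)\ne 0$. The paper does this by the argument of Proposition~\ref{heph}: the class $\Ext^2(E_i,S^{-1}(E_i))=H^2(X,\omega_X^{-1})\ne 0$ sits in $\BE_1(\SCH)^{0,4}$, no differentials emanate from the column $p=0$, and the incoming differentials (from total degree $3$) cannot kill it, so $\BE_\infty(\SCH)^{0,4}\ne 0$. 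Without this step the conclusion $\he(I_0)=4$ is not proved.
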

\begin{proof}
% Note that some of the quasihelices generated by Galkin--Shinder collections coincide, in fact there are only
% two different quasihelices (up to a shift of indices), one of quasiperiod 4 and the other of quasiperiod 2.
% Since all collections generating the same quasihelix have equal height we can restrict to just two collections.
The collections consist of line bundles, the canonical degrees of the anticanonically extended collections are given by the following sequences
\begin{equation*}
\begin{array}{rl}
I_1:\quad & 0,-2,-2,-4\ ;\ -8,-10,-10,-12\\
I_0:\quad & 0,-2,-4,-6\ ;\ -8,-10,-12,-14
\end{array}
\end{equation*}
The degrees do not increase, so by Lemma~\ref{h0} the collections are $\Hom$-free. Thus $\ph_\ac(E_1,\dots,E_4) \ge 1$ by Lemma~\ref{h12}.
Let us check whether the collections are $\Ext^1$-connected. It turns out that the collection $I_0$ is $\Ext^1$-connected by the chain
$a=(1,2,3,4)$, hence $\ph_\ac(I_0) = 1$, while the collection $I_1$ is not $\Ext^1$-connected, 
since there is no $\Ext^1$ from $E_1,\dots,E_4$  to $E_1\otimes\omega_X^{-1},\dots,E_4\otimes\omega_X^{-1}$ ---
this can be easily seen from the character matrices in Proposition~3.7 of loc.\ cit., hence $\ph_\ac(I_1) = 2$.
Again using Lemma~\ref{h2ac} we conclude that $\he_\ac(I_1)=2$.
% the anticanonical height of the collection $I_1$ is $2$.

To show that the anticanonical height of the collection $I_0$ is also $2$, or equivalently that its height is~$4$,
we need to look at the normal Hochschild spectral sequence. It is clear that the only component of total degree $3$ in $\BE_1(\SCH)$ is
\begin{equation*}
\Ext^1(E_1,E_2)\otimes \Ext^1(E_2,E_3)\otimes \Ext^1(E_3,E_4)\otimes \Ext^3(E_4,S^{-1}(E_1)) \subset \BE_1(\SCH)^{-3,6}.
\end{equation*}
The differential $d_1$ from this term lands into the direct sum of the following four terms
\begin{equation*}
\begin{array}{c}
\Ext^2(E_1,E_3)\otimes \Ext^1(E_3,E_4)\otimes \Ext^3(E_4,S^{-1}(E_1)),\\ 
\Ext^1(E_1,E_2)\otimes \Ext^2(E_2,E_4)\otimes \Ext^3(E_4,S^{-1}(E_1)),\\ 
\Ext^1(E_1,E_2)\otimes \Ext^1(E_2,E_3)\otimes \Ext^4(E_3,S^{-1}(E_1)),\\ 
\Ext^1(E_2,E_3)\otimes \Ext^1(E_3,E_4)\otimes \Ext^4(E_4,S^{-1}(E_2)), 
\end{array}
\end{equation*}
which is a subspace of $\BE_1(\SCH)^{-2,6}$. The map into each term is just the multiplication.
It is easy to check that even the first map $\Ext^1(E_1,E_2)\otimes\Ext^1(E_2,E_3) \to \Ext^2(E_1,E_3)$
is nonzero, hence $\BE_2(\SCH)^{-3,6} = 0$ and it follows that $\he(E_1,\dots,E_4) \ge 4$.
Finally, using the argument of Proposition~\ref{heph} it is easy to see that $\BE_\infty(\SCH)^{0,4} \ne 0$,
so $\he(E_1,\dots,E_4) = 4$.
%  and $\he_\ac(E_1,\dots,E_4) = 2$.
% 
% 
% So, for any circle $a = (a_0,\dots,a_k)$ 
% we have to check that at least for one $s$ we have $\Ext^1(E_{a_s},E_{a_{s+1}}) = 0$.
% % 
% %  and because of that the collections are $\Hom$-free. Thus the heights are not less than $1$. 
% % So, it remains to check that the collections are not $\Ext^1$-connected. 
% % 
% This can be seen by looking at their character matrices, see~\cite{GS}.
% %  \fbox{\tt Add reference!}.
\end{proof}

\begin{corollary}
Let $\CA_0$ and $\CA_1$ be the orthogonal subcategories of the exceptional collections $I_0$ and $I_1$ 
on the Beauville surface. Then the natural restriction morphisms $\HOH^k(X) \to \HOH^k(\CA_{t})$
are isomorphisms for $k \le 2$ and monomorphisms for $k = 3$.
% 
% and the morphism $\HOH^3(X) \to \HOH^3(\CA)$ is a monomorphism
% for the Beauville surface.
\end{corollary}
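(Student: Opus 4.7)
The plan is to derive this corollary as a direct application of Theorem~\ref{hl} to each of the two exceptional collections $I_0$ and $I_1$, using the height computation already established in the preceding proposition. Recall that Theorem~\ref{hl} asserts: if an exceptional collection has height $h$, then the restriction morphism $\HOH^k(X) \to \HOH^k(\CA)$ on the Hochschild cohomology of the orthogonal complement is an isomorphism in degrees $k \le h - 2$ and a monomorphism in degree $k = h-1$.

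The preceding proposition established that the height of both $I_0$ and $I_1$ equals $4$. For $I_1$ this came from anticanonical pseudoheight $2$ combined with Lemma~\ref{h2ac} and Proposition~\ref{heph}. For $I_0$, where the anticanonical pseudoheight is only $1$, the argument required inspecting the $d_1$ differential of the normal Hochschild spectral sequence at the unique contribution of total degree $3$ in $\BE_1(\SCH)^{-3,6}$, checking that at least one multiplication map such as $\Ext^1(E_1,E_2)\otimes \Ext^1(E_2,E_3) \to \Ext^2(E_1,E_3)$ is nontrivial, so the degree-$3$ class is killed on the second page, while $\BE_\infty(\SCH)^{0,4}$ remains nonzero. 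The common value $h = 4$ then substitutes directly into the theorem to give isomorphism for $k \le 2$ and monomorphism for $k = 3$, uniformly for $t = 0, 1$.

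Because the substantive content lies in the height computation carried out in the proposition, there is essentially no obstacle in this corollary; it is a one-line deduction, with the only observation needed being that both $I_0$ and $I_1$ give the same height despite differing pseudoheights. In other words, the two collections are indistinguishable from the point of view of Hochschild cohomology restriction, which is precisely the content of the corollary.
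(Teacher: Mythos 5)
Your proposal is correct and is exactly the argument the paper intends: the corollary is an immediate application of Theorem~\ref{hl} with $h=4$, the common height of $I_0$ and $I_1$ established in the preceding proposition. Nothing further is needed.
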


In fact the Hochschild cohomology of the Beauville surface is very simple (see Lemma 2.6 of \cite{GS})
\begin{equation*}
\HOH^0(X) = \kk,\quad
\HOH^1(X) = \HOH^2(X) = 0,\quad
\HOH^3(X) = \kk^6,\quad
\HOH^4(X) = \kk^9.
\end{equation*}
It follows that $\HOH^0(\CA_{0,1}) =  \kk$ and $\HOH^1(\CA_{0,1}) = 0$,
$\HOH^2(\CA_{0,1}) = 0$.
It is an interesting question whether the quasiphantom categoies $\CA_0$ and $\CA_{1}$ are equivalent.
% From the above corollary it seems that they may be nonequivalent as $\CA_{II}$ may have second
% Hochschild cohomology while $\CA_I$ does not have it. However, using a more refined invariant
% (see section~\ref{refined}) one can that $\HOH^2(\CA_I) = 0$, so second Hochschild cohomology
% do not distinguish these quasiphantoms.

\subsection{The classical Godeaux surface}

B\"ohning--Graf von Bothmer--Sosna in~\cite{BBS} have constructed an exceptional collection 
of line bundles of length $n = 11$ in the derived category of the classical Godeaux surface. 
The orthogonal subcategory $\CA \subset \BD(X)$ has trivial Hochschild homology and $K_0(\CA) = \ZZ/5\ZZ$. 
Historically, this is the first example of a quasiphantom category. However, it is more complicated
than the other two examples, which is the reason why we considered those first. To find the height
of the B\"ohning--Graf von Bothmer--Sosna collection we will need the following 

\begin{lemma}\label{e23}
If $E_1,\dots,E_{11}$ is the B\"ohning--Graf von Bothmer--Sosna collection then 
\begin{equation*}
\Hom(E_3,E_2(-K)) = \Ext^1(E_3,E_2(-K)) = 0
\quad\text{and}\quad
\Ext^2(E_3,E_2(-K)) \ne 0.
\end{equation*}
\end{lemma}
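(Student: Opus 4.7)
The plan is to set $L := E_3^\vee \otimes E_2 \otimes \omega_X^{-1}$, which is a line bundle on the classical Godeaux surface $X$, so that
\begin{equation*}
\Ext^i(E_3, E_2(-K)) \cong H^i(X, L).
\end{equation*}
Using the explicit construction of the B\"ohning--Graf von Bothmer--Sosna collection, $E_2$ and $E_3$ are torsion twists of line bundles with specified numerical classes, so $L$ takes the form $\CO_X(dK_X) \otimes \tau$ for some integer $d$ and a torsion line bundle $\tau$ corresponding to a nontrivial character of $G = \ZZ/5\ZZ = \pi_1(X)$. The whole proof then reduces to computing $H^0(L)$, $H^1(L)$, and $H^2(L)$ for this explicit $L$.

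For the two end-point statements I would proceed as follows. The vanishing $H^0(X, L) = 0$ is a Lemma~\ref{h0}-style intersection argument: once the numerical class and torsion character of $L$ are identified, one checks that $c_1(L) \cdot K_X \le 0$ and that $L$ is not the trivial line bundle (because $\tau$ is nontrivial), so $L$ has no global sections. The nonvanishing $H^2(X, L) \ne 0$ is obtained via Serre duality:
\begin{equation*}
\Ext^2(E_3, E_2(-K))^\vee \;\cong\; H^0(X, L^\vee \otimes \omega_X) \;=\; H^0(X, E_3 \otimes E_2^\vee \otimes \omega_X^{\otimes 2}),
\end{equation*}
and then exhibiting a nonzero section of this line bundle, either by a direct analogue of the above intersection argument (the canonical degree is now positive and the underlying class is effective), or by passing to the Fermat quintic cover $\pi \colon Y \to X$ and picking out the appropriate $G$-isotypic component of homogeneous polynomials of the required degree in the coordinates of $\PP^3$.

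The main obstacle is the middle vanishing $H^1(X, L) = 0$, which is not forced by positivity and must be obtained by a quantitative count. The cleanest route is Riemann--Roch on the surface:
\begin{equation*}
\chi(L) \;=\; 1 + \tfrac{1}{2}\, L \cdot (L - K_X),
\end{equation*}
which, once $h^0(L) = 0$ and $h^2(L)$ are known from the previous step, determines $h^1(L) = h^2(L) - \chi(L)$. The remaining arithmetic reduces to computing the intersection numbers on the right-hand side for the explicit $L$ and verifying the identity $\chi(L) = h^2(L)$. As a cross-check (or alternative), one may compute $H^\bullet(Y, \pi^* L)$ as a $G$-representation from the Koszul resolution of $Y \subset \PP^3$ and extract the $\tau$-isotypic pieces directly, confirming that they vanish in degrees $0$ and $1$ while contributing in degree $2$. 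This last bookkeeping with characters of $G$ acting on polynomial rings is where the actual work sits.
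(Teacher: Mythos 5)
Your overall strategy (kill $H^0$ by negativity against $K_X$, get $H^2\ne 0$ by Serre duality and effectivity, then pin down $H^1$ by a Riemann--Roch count) is sound in outline, but it rests on a false structural claim about the line bundle $L = E_3^\vee\otimes E_2\otimes\omega_X^{-1}$. On the classical Godeaux surface the Picard group has rank $9$, and the BBS collection is \emph{not} built from twists of multiples of $K_X$ by torsion: the key point (and the heart of the paper's proof) is that $\Hom(E_2,E_3)\ne 0$ together with $c_1(E_3\otimes E_2^{-1})\cdot K_X = 1$ and $\chi(E_3\otimes E_2^{-1}) = -1$ forces $E_3\otimes E_2^{-1}\cong\CO_X(D)$ with $D$ one of the $15$ lines on $X$, so $D\cdot K_X=1$ and $D^2=-3$. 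Such a class is not numerically proportional to $K_X$ (a class $dK_X$ with $dK_X\cdot K_X=1$ would have self-intersection $+1$, not $-3$), so your ansatz $L\cong\CO_X(dK_X)\otimes\tau$ is wrong, and the intersection numbers you would feed into $\chi(L)=1+\tfrac12 L\cdot(L-K_X)$ come out incorrect. With the correct class $L=\CO_X(-D-K_X)$ one gets $L^2=0$, $L\cdot K_X=-2$, hence $\chi(L)=2$, and the count does close up ($h^0=0$, $h^2=2$, so $h^1=0$); but that identification of $D$ as a line is exactly the input your proposal is missing.

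A secondary gap: for the Riemann--Roch step you need the exact value $h^2(L)=h^0(X,\CO_X(D+2K_X))$, not merely its nonvanishing, and your ``previous step'' only produces a nonzero section. The paper avoids both issues at once by applying Serre duality first and then twisting the divisor sequence $0\to\CO_X\to\CO_X(D)\to\CO_D(D)\to 0$ by $2K_X$: since $D\cong\PP^1$ with $\CO_D(D)=\CO_D(-3)$, the twisted sequence $0\to\CO_X(2K_X)\to\CO_X(D+2K_X)\to\CO_D(-1)\to 0$ shows $\CO_X(D+2K_X)$ has only $H^0$, of dimension $2$, which gives all three assertions simultaneously without any separate $\chi$ computation. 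Your alternative of computing isotypic pieces on the Fermat quintic cover would also work, but only after you correct the identification of $L$.
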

\begin{proof}
By Serre duality we should compute $\Ext^p(E_2,E_3(2K)) = H^p(X,E_2^{-1}\otimes E_3(2K))$.
Recall that $\Hom(E_2,E_3)\ne 0$, hence $E_2^{-1}\otimes E_3 \cong \CO_X(D)$ for some effective divisor $D$.
Moreover, by Lemma 8.2 and 10.2 of~\cite{BBS} we have $D\cdot K_X = 1$ and $\chi(\CO_X(D)) = -1$,
hence from the sequence
\begin{equation*}
0 \to \CO_X \to \CO_X(D) \to \CO_D(D) \to 0
\end{equation*}
we conclude that $\chi(\CO_D(D)) = -2$, so it follows 
from the results of section 4 of~\cite{BBS} that $D$ is one of 15 lines on $X$. In paticular, 
$D \cong \PP^1$ and $\CO_D(D) = \CO_D(-3)$.
Twisting the above sequence by $2K$ we obtain
\begin{equation*}
0 \to \CO_X(2K) \to \CO_X(D + 2K) \to \CO_D(-1) \to 0.
\end{equation*}
Since $\CO_X(2K)$ has only $H^0$ which is of dimension $2$ and $\CO_D(-1)$ has no cohomology at all,
we conclude that $\CO_X(D+2K)$ has only $H^0$ which is also of dimension 2.
\end{proof}

\begin{proposition}
The pseudoheight of the B\"ohning--Graf von Bothmer--Sosna exceptional collection is~$3$
and its height is $4$.
\end{proposition}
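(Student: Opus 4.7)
I would prove both the pseudoheight and the height claim by leveraging Lemma~\ref{e23} as the key input.

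\emph{Pseudoheight $\ph=3$.} The upper bound $\ph\le 3$ is immediate from the chain $(a_0,a_1)=(2,3)$: the proof of Lemma~\ref{e23} recalls that $\Hom(E_2,E_3)\ne 0$, so $\se(E_2,E_3)=0$, while Lemma~\ref{e23} itself gives $\se(E_3,S^{-1}(E_2))=\se(E_3,E_2\otimes\omega_X^{-1})+2=4$. The chain contributes $0+4-1=3$. For the lower bound one performs a chain-by-chain verification of $\ph\ge 3$, using the Hom- and $\Ext^1$-vanishing statements in~\cite{BBS} for the anticanonically extended BBS collection. The extended collection is $\Hom$-free except at the single pair $(E_2,E_3)$, and chains not passing through this pair are handled exactly as in Lemma~\ref{h12}; chains in which $(E_2,E_3)$ appears as a consecutive pair are ruled out by the specific $\Ext^1$-vanishings of~\cite{BBS} together with the vanishing $H^0(\omega_X^{-1})=H^1(\omega_X^{-1})=0$ on the Godeaux surface. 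The unique minimising chain is $(2,3)$.

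\emph{Height $\he=4$.} For the upper bound $\he\le 4$, exhibit a nonzero class in $\NHH^4$ by a dimension count. The column $\BE_1^{0,4}$ contains the direct sum $\bigoplus_{i=1}^{11}\Ext^4(E_i,S^{-1}(E_i))=\bigoplus_{i=1}^{11}H^2(X,\omega_X^{-1})$, of dimension $11\cdot P_2(X)=22$. The only differentials hitting this column have sources at total degree $3$, whose entire contribution in $\BE_1$ is the $2$-dimensional space computed below. Therefore the cokernel is nonzero and $\NHH^4\ne 0$.

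For the lower bound $\he\ge 4$ one must show $\NHH^3=0$. By the pseudoheight analysis, the total-degree-$3$ part of $\BE_1(\SCH)$ coincides with
\begin{equation*}
\Hom(E_2,E_3)\otimes \Ext^4\bigl(E_3,S^{-1}(E_2)\bigr)\subset \BE_1^{-1,4},
\end{equation*}
a $1\cdot 2=2$-dimensional subspace. Compute the component of $d_1$ landing in the $i=2$ summand $\Ext^4(E_2,S^{-1}(E_2))\cong H^2(X,\omega_X^{-1})$ of $\BE_1^{0,4}$: it is the Yoneda product
\begin{equation*}
\Hom(E_2,E_3)\otimes \Ext^2(E_3,E_2\otimes\omega_X^{-1})\;\longrightarrow\; \Ext^2(E_2,E_2\otimes\omega_X^{-1})=H^2(X,\omega_X^{-1}).
\end{equation*}
By Serre duality, this is dual to the multiplication map $H^0(X,\omega_X^2)\to H^0(X,\CO_X(D+2K))$ induced by the defining section $e\in H^0(\CO_X(D))$ of the curve $D$ arising in the proof of Lemma~\ref{e23}. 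The short exact sequence
\begin{equation*}
0\to \CO_X(2K)\to \CO_X(D+2K)\to \CO_D(-1)\to 0
\end{equation*}
from that proof, together with $H^\bullet(\CO_D(-1))=0$, identifies this multiplication with an isomorphism between two $2$-dimensional spaces. The $d_1$-component is therefore an isomorphism, so $d_1$ is injective on the entire total-degree-$3$ subspace of $\BE_1$, and $\BE_2$ vanishes in total degree~$3$. Hence $\NHH^3=0$.

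\emph{Main obstacle.} The principal technical effort is the combinatorial verification of $\ph\ge 3$: one has to exclude, for every chain $1\le a_0<\dots<a_p\le 11$ that passes through the special pair $(E_2,E_3)$, any contribution smaller than~$3$, which relies on specific Hom- and $\Ext^1$-vanishings in~\cite{BBS}. Once this is done, the height statement reduces cleanly to recognising that the isomorphism $H^0(\CO_X(2K))\xrightarrow{\sim}H^0(\CO_X(D+2K))$ from the proof of Lemma~\ref{e23} is precisely the controlling $d_1$-component of the normal Hochschild spectral sequence.
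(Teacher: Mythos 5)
Your strategy coincides with the paper's: the chain $(2,3)$ together with Lemma~\ref{e23} gives $\ph\le 3$; a chain-by-chain analysis based on the canonical degrees and the $\Ext^1$-vanishings of~\cite{BBS} gives $\ph\ge 3$; a dimension count in $\BE_1(\SCH)^{0,4}$ (into which differentials can only arrive from total degree $3$, and out of which nothing maps) gives $\he\le 4$; and injectivity of the $d_1$-component $\Hom(E_2,E_3)\otimes\Ext^2(E_3,E_2(-K))\to\Ext^2(E_2,E_2(-K))$ kills the $(2,3)$-term on the second page. Your Serre-duality identification of that component with the multiplication $H^0(\CO_X(2K))\to H^0(\CO_X(D+2K))$ by the section cutting out $D$ is precisely what the paper means by ``the argument of Lemma~\ref{h0},'' and it is correct.

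There is, however, one genuine gap. You assert that ``by the pseudoheight analysis, the total-degree-$3$ part of $\BE_1(\SCH)$ coincides with $\Hom(E_2,E_3)\otimes\Ext^4(E_3,S^{-1}(E_2))$,'' and correspondingly that $(2,3)$ is the unique minimising chain. But the lower-bound argument for $\ph\ge 3$ only shows that every chain contributes in total degree at least $3$; it does not determine which chains contribute in degree exactly $3$, since that requires the exact values of $\se(E_{a_0},E_{a_1})$ and $\se(E_{a_1},E_{a_0}\otimes\omega_X^{-1})$ rather than lower bounds. The paper lists six further candidate chains, $(1,3)$, $(1,7)$, $(2,7)$, $(4,7)$, $(5,7)$, $(6,7)$, on which the pseudoheight might also be achieved, and excludes them only by an explicit computation cited as a private communication~\cite{BB}. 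Without this step your conclusion $\NHH^3=0$ (hence $\he\ge 4$) is not established: any additional degree-$3$ terms would need their own analysis of products (and possibly higher products), and your dimension count for $\he\le 4$ also silently assumes the degree-$3$ part is only $2$-dimensional. Everything else in your argument goes through once this computation is supplied.
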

\begin{proof}
% Consider the anticanonically extended sequence $E_1,\dots,E_{22}$.
By Lemma 10.2 of~\cite{BBS} and Lemma~\ref{e23} above we have
\begin{equation*}
\se(E_2,E_3) + \se(E_3,E_2(-K)) - 1 = 0 + 2 - 1 = 1,
\end{equation*}
hence $\ph_\ac(E_1,\dots,E_{11}) \le 1$.
% the chain $(2,3,13)$ has $\se(E_2,E_3) = 0$ and $\se(E_3,E_{13}) = 2$, hence $h \le 1$. 
So, we only have to check that $\ph_\ac(E_1,\dots,E_{11}) > 0$.

Note that the extended collection is almost $\Hom$-free. 
Indeed, all objects of the anticanonically extended collection are line bundles of the following canonical degrees 
\begin{equation*}
0,0,1,0,0,0,1,0,0,0,0\ ;\ -1,-1,0,-1,-1,-1,0,-1,-1,-1,-1.
\end{equation*}
% (we use here the fact that $K^2 = 1$, hence the anticanonical twist decreases the degree by $1$).
By Lemma~\ref{h0} we could only have morphisms between the objects of a nonextended collection
(or their twists), which by~\cite[Lemma 10.2]{BBS} are nonzero only from $E_2$ to $E_3$.
% (and hence from $E_{13}$ to $E_{14}$).
% 
% 
% Since there are no $\Hom's$ between from a line bundle of greater degree to that of smaller degree,
% it follows that the only possibility of having $\Hom$'s fom $E_i$ to $E_j$ with $i \le n < j \le i + n$
% in the extended collection is from $E_i$ of degree $0$ to $E_j$ which is also of degree zero.
% But any morphism between the line bundles of the same degree is an isomorphism, but we cannot have
% $E_i \cong E_j$ for $i < j < i+n$ since both $E_i$ and $E_j$ are the objects of an exceptional collection
% $E_i,E_{i+1},\dots,E_{i+n-1}$. So, it follows that the only $\Hom$'s are from $E_2$ to $E_3$ and from
% $E_{13}$ to $E_{14}$.

Therefore, to show that the anticanonical pseudoheight is positive
% $\ph(E_1,\dots,E_{11}) \ne 0$ 
we only have to check that there are no chains $1 \le a_0 < \dots < a_p \le 11$
such that in the chain of line bundles $E_{a_0},\dots,E_{a_p},E_{a_0}(-K)$ there is an
adjacent pair which has $\Hom$, and all other adjacent pairs have $\Ext^1$ between them.

Assume such a chain exists.
The arguments above show that the pair $(E_2,E_3)$ should be in the chain.
% such that one of the segments has $\Hom$ and all others have $\Ext^1$. By Remark~\ref{ik1} we can assume that $a_{k-1} \le 11$.
% (if it is more than $11$ just replace the last segment by its shift by $-11$).
% Because of this we only have to consider chains containing the $(2,3)$-segment. 
In particular we should have $a_0 \le 2$.
Assume first that $a_0 = 1$. Then we must have $a_1 = 2$, $a_2 = 3$. 
But $\Ext^1(E_1,E_2) = 0$, so this a contradiction. 
Assume that $a_0 = 2$. Then $a_1 = 3$. Note that from $E_3$ to $E_4,\dots, E_{11}$ there are no $\Ext^1$, 
hence $p$ should be $1$. But $\Ext^1(E_3,E_2(-K)) = 0$ by Lemma~\ref{e23}, so this is again a contradiction.
% hence $a_2$ should be $12$ or $13$. It cannot be $12$ because $\Ext^1(E_{12},E_{13}) = \Ext^1(E_1,E_2) = 0$, 
% so the only possibility is $a_2 = 13$. But $\Ext^1(E_3,E_{13}) = \Ext^1(E_3,E_2(-K)) = 0$
% by Lemma~\ref{e23} above.
Thus the anticanonical pseudoheight is positive and we are done.

To compute the height we have to look at the spectral sequence.
By the arguments above we already know that $\he(E_1,\dots,E_{11}) \ge \ph(E_1,\dots,E_{11}) = \ph_\ac(E_1,\dots,E_{11}) + 2 = 3$. On the other hand,
by the arguments of Proposition~\ref{heph} it is easy to show that $\he(E_1,\dots,E_{11}) \le 4$.
So, the only thing to check is whether the Hochschild homology of the normal bimodule is nontrivial
in degree $3$. For this one has to analyze all chains $a_0 < \dots < a_p$ on which the pseudoheight
is achieved and to analyze the products (and maybe the higher products) of their terms. 
We already know that the pseudoheight is achieved on the chain $(2,3)$. But the argument of Lemma~\ref{h0}
shows that the composition 
\begin{equation*}
\Hom(E_2,E_3)\otimes\Ext^2(E_3,E_2(-K)) \to \Ext^2(E_2,E_2(-K))
\end{equation*}
is a monomorphism, hence this term is killed in the second page of the spectral sequence. On the other hand, 
the only other chains on which the pseudoheight might be achieved are
\begin{equation*}
(1,3), (1,7), (2,7), (4,7), (5,7), (6,7).
\end{equation*}
An explicit computation~\cite{BB} shows that this is not the case for all of them, hence the height is $4$.
% To be more precise, the pseudoheight is achieved on a pair $(i,j)$ from the above list if $\Ext^1(E_j,E_i(-K)) \ne 0$.
% For such pairs one then should check whether the compositions $\Ext^1(E_i,E_j)\otimes\Ext^1(E_j,E_i(-K)) \to \Ext^2(E_i,E_i(-K))$
% are monomorphisms.
\end{proof}

\begin{corollary}
The natural restriction morphism $\HOH^k(X) \to \HOH^k(\CA)$ is an 
isomorphism for $k \le 2$ and a monomorphism for $k = 3$ for the classical Godeaux surface.
In particular, the formal deformation
space of $\CA$ is isomorphic to that of $\BD(X)$.
\end{corollary}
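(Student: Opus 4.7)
The plan is to deduce this corollary directly from the height computation just established together with the abstract results already proved. Specifically, the previous proposition shows that the B\"ohning--Graf von Bothmer--Sosna exceptional collection $E_1,\dots,E_{11}$ on the classical Godeaux surface $X$ has height $\he(E_1,\dots,E_{11}) = 4$. Since $\CA$ is by definition the orthogonal complement of this collection, the corollary will follow by applying the general machinery of Section~4 to this particular value of $h = 4$.

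First, I would invoke Theorem~\ref{hl} with $h = 4$. That theorem states precisely that the canonical restriction morphism $\HOH^k(X) \to \HOH^k(\CA)$ is an isomorphism for $k \le h-2 = 2$ and a monomorphism for $k = h-1 = 3$, which is exactly the first assertion of the corollary. No further work is needed for this part.

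For the statement about formal deformation spaces, I would appeal to the Proposition in the subsection on formal deformation spaces, which says that whenever the height of an exceptional collection is at least $4$, the formal deformation space of $\CA$ is isomorphic to that of $\BD(X)$. Since we have $\he(E_1,\dots,E_{11}) = 4 \ge 4$, this applies immediately and yields the second claim.

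There is no real obstacle here; the corollary is a direct specialization of general results to the example treated in the preceding proposition. The only thing one might want to double-check is that the restriction morphism used in Theorem~\ref{hl} agrees with the Gerstenhaber algebra morphism used in the deformation-theoretic proposition, but both are induced by the same morphism $\Delta_*\CO_X \to P$ of Fourier--Mukai kernels, so this compatibility is built into the framework of Section~3.
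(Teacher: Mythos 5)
Your proof is correct and is exactly the argument the paper intends: the preceding proposition gives $\he(E_1,\dots,E_{11})=4$, Theorem~\ref{hl} then yields the isomorphism for $k\le 2$ and the monomorphism for $k=3$, and the proposition on formal deformation spaces applies since the height is at least $4$. Nothing further is needed.
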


% 
% 
% \begin{remark}
% Analogously one can investigate the B\"ohning--Graf von Bothmer--Sosna exceptional collection.
% In the total degree $1$ we have the following terms
% \begin{equation*}
% \begin{array}{c}
% \Hom(E_2,E_3)\otimes\Ext^2(E_3,E_{13}),\\
% \Ext^1(E_1,E_3)\otimes\Ext^1(E_3,E_{12}),\quad
% \Ext^1(E_1,E_7)\otimes\Ext^1(E_7,E_{12}),\quad
% \Ext^1(E_2,E_7)\otimes\Ext^1(E_7,E_{13}),\\
% \Ext^1(E_4,E_7)\otimes\Ext^1(E_7,E_{15}),\quad
% \Ext^1(E_5,E_7)\otimes\Ext^1(E_7,E_{16}),\quad
% \Ext^1(E_6,E_7)\otimes\Ext^1(E_7,E_{17})
% \end{array}
% \end{equation*}
% \fbox{\tt Do we???}
% It remains to check that all of them are killed in the second term of the spectral sequence.
% In other words we have to check that the multiplication maps are injective on all these tensor products.
% \end{remark}

\section{The necessary and sufficient conditions of fullness}

Quite unexpectedly, the height gives a necessary condition of fullness of an exceptional collection.

\begin{proposition}
Let $X$ be a smooth projective variety and let $E_1,\dots,E_n$ be an exceptional collection in $\BD^b(\coh(X))$.
If $\he(E_1,\dots,E_n) > 0$ then the collection is not full.
\end{proposition}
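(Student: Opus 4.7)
The plan is to argue the contrapositive: assuming that the exceptional collection $E_1,\dots,E_n$ is full, I will show that $\he(E_1,\dots,E_n)\le 0$. The main tool is the fundamental distinguished triangle of Theorem~\ref{rhhtri}, which ties the normal Hochschild cohomology of the subcategory generated by the collection to the ordinary Hochschild cohomology of $X$ and of its orthogonal complement.

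Concretely, set $\CB = \langle E_1,\dots,E_n\rangle$ and $\CA = \CB^\perp$, so that $\BD^b(\coh(X)) = \langle \CA, \CB\rangle$. If the collection is full then $\CA = 0$, and in particular $\HOH^\bullet(\CA) = 0$. Applying Theorem~\ref{rhhtri} gives the triangle
\begin{equation*}
\NHH^\bullet(\CB, X) \to \HOH^\bullet(X) \to \HOH^\bullet(\CA) = 0,
\end{equation*}
which yields an isomorphism $\NHH^\bullet(\CB, X) \cong \HOH^\bullet(X)$. Since $\NHH^\bullet(\SCE,\SCD) \cong \NHH^\bullet(\SCB,\SCD) = \NHH^\bullet(\CB,X)$ by the lemma identifying the normal Hochschild cohomology of $\SCE$ with that of $\SCB$, this passes to the normal Hochschild cohomology of the collection itself.

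To conclude, I would invoke the fact $\HOH^0(X) = H^0(X,\CO_X) = \kk$, which is the degree-$0$ part of the Hochschild--Kostant--Rosenberg decomposition recorded earlier in the paper (and uses only connectedness of $X$). Hence $\NHH^0(\SCE,\SCD) \cong \kk \ne 0$, so by definition of height $\he(E_1,\dots,E_n) \le 0$, contradicting the hypothesis $\he(E_1,\dots,E_n) > 0$.

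There is no genuine obstacle here; the proposition is a direct corollary of Theorem~\ref{rhhtri} together with the nonvanishing of $\HOH^0(X)$. The only subtlety worth flagging is the passage between the DG-categories $\SCE$ and $\SCB$, which is harmless because of the aforementioned lemma.
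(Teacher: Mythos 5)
Your proof is correct and is essentially the paper's argument run in the contrapositive direction: both rest on the distinguished triangle of Theorem~\ref{rhhtri} (equivalently, on Theorem~\ref{hl}) together with the nonvanishing of $\HOH^0(X) = H^0(X,\CO_X)$. Only that nonvanishing is needed, so the appeal to connectedness to get $\HOH^0(X)=\kk$ is an unnecessary (and unassumed) extra hypothesis, but it does not affect the argument.
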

\begin{proof}
Let $\CA$ be the orthogonal subcategory. If $\he(E_1,\dots,E_n) > 0$ then by Theorem~\ref{hl} 
the morphism $\HOH^0(X) \to \HOH^0(\CA)$ is a monomorphism. Since $\HOH^0(X) = H^0(X,\CO_X) \ne 0$,
we conclude that $\HOH^0(\CA) \ne 0$. Therefore $\CA \ne 0$ so the collection is not full.
\end{proof}

Since the height is not smaller than the pseudoheight we obtain also an easily verifiable criterion of nonfullness.

\begin{corollary}
% Let $X$ be a smooth projective variety and $E_1,\dots,E_n$ is an exceptional collection in $\BD^b(\coh(X))$.
If $\ph(E_1,\dots,E_n) > 0$ then the collection is not full.
\end{corollary}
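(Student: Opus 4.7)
The plan is to deduce this corollary immediately from the preceding Proposition by combining it with the inequality between height and pseudoheight established earlier. There is essentially no new content beyond chaining two already-proved statements, so the proof will be very short.

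First I would invoke Lemma~\ref{hph}, which states that $\he(E_1,\dots,E_n) \ge \ph(E_1,\dots,E_n)$. Under the hypothesis $\ph(E_1,\dots,E_n) > 0$, this immediately yields $\he(E_1,\dots,E_n) > 0$.

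Then I would apply the preceding Proposition (the one asserting that any exceptional collection with strictly positive height fails to be full) to conclude that $E_1,\dots,E_n$ is not a full exceptional collection. For completeness one can recall the underlying reason: positivity of $\he$ ensures via Theorem~\ref{hl} that $\HOH^0(X) \hookrightarrow \HOH^0(\CA)$ is a monomorphism, so $\HOH^0(\CA)$ contains the nonzero element $1 \in H^0(X,\CO_X)$, forcing $\CA \ne 0$.

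The only subtlety worth flagging is that no obstacle arises here at all; the value of the corollary is purely practical, namely that pseudoheight is computable from $\Ext$-groups alone without any $A_\infty$-information, so the criterion $\ph > 0$ is easy to check in examples, whereas $\he > 0$ would in principle require understanding the full normal Hochschild spectral sequence.
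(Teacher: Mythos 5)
Your proof is correct and is exactly the paper's argument: the corollary follows by combining Lemma~\ref{hph} ($\he \ge \ph$) with the preceding proposition on positive height. Nothing further is needed.
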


In particular, it follows that in all examples of section~\ref{s-ex} the collections are not full.
Note that this argument does not use the computation of the Grothendieck group of these categories
% (and so does not use the Bloch conjecture which is essential for such computations). 
% Moreover, this argument 
and can be applied also for real phantom categories, when the Grothendieck
group does not help. 

Even more unexpectedly is that one can also use the normal Hochschild cohomology to deduce
the fullness of the collection. Let
\begin{equation*}
\rho:\NHH^\bullet(\SCE,\SCD) \to \HOH^\bullet(X)
\end{equation*}
be the morphism of~\eqref{rhh}.

\begin{theorem}
Let $X$ be a smooth and connected projective variety and let $E_1,\dots,E_n$ be an exceptional collection in $\BD^b(\coh(X))$.
Assume that there is an element $\xi \in \NHH^0(\SCE,\SCD)$ such that $\rho(\xi) \ne 0$.
Then the collection $E_1,\dots,E_n$ is full.
\end{theorem}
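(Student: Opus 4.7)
The plan is to combine the distinguished triangle of Theorem~\ref{rhhtri} with the very simple structure of $\HOH^0(X)$ for a connected smooth projective variety. By the Hochschild--Kostant--Rosenberg isomorphism, $\HOH^0(X) = H^0(X,\CO_X) = \kk$, spanned by the identity $\id_X$, which under the equivalence $\mu$ corresponds to $\id_{\Delta_*\CO_X}$. The first step is therefore to observe that the hypothesis $\rho(\xi)\ne 0$ forces the map $\rho\colon \NHH^0(\SCE,\SCD)\to \HOH^0(X)$ to be surjective, since its target is a one-dimensional $\kk$-vector space.

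The second step is to apply exactness to the long exact sequence of~\eqref{rhh}:
\begin{equation*}
\NHH^0(\SCE,\SCD)\xrightarrow{\ \rho\ }\HOH^0(X)\xrightarrow{\ \mathrm{res}\ }\HOH^0(\CA)\to \NHH^1(\SCE,\SCD).
\end{equation*}
Surjectivity of $\rho$ implies that $\mathrm{res}$ is zero, so $\mathrm{res}(\id_X)=0$. Here I would verify that the restriction morphism takes $\id_X$ to $\id_\CA$. This is essentially formal: using the identifications $\HOH^\bullet(X)\cong \HH^\bullet(X,\Delta^!\Delta_*\CO_X)$ and $\HOH^\bullet(\CA)\cong \HH^\bullet(X,\Delta^!P)$ from the discussion preceding Lemma~\ref{hdq}, the restriction is the map induced by the projection $\Delta_*\CO_X \to P$ of the universal semiorthogonal decomposition triangle, and this projection carries the unit of the kernel $\Delta_*\CO_X$ (generating $\HOH^0(X)$) to the unit of the kernel $P$ of the Fourier--Mukai projection onto $\CA$, which is precisely $\id_\CA$.

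The third and final step is to deduce that $\CA = 0$. Evaluating the identity endotransformation $\id_\CA$ at any object $A\in\CA$ yields $\id_A\in \End(A)$; the vanishing $\id_\CA=0$ forces $\id_A=0$, hence $A\cong 0$, for every $A\in\CA$. Therefore $\CA=0$, which by definition means the exceptional collection $E_1,\dots,E_n$ generates $\BD^b(\coh(X))$, i.e.\ is full.

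The only nontrivial point in this argument is the identification $\mathrm{res}(\id_X)=\id_\CA$, which is the place where a careful verification is needed; everything else is a straightforward consequence of the long exact sequence and the connectedness of $X$. Once that identification is established, the proof is essentially immediate: surjectivity of $\rho$ in degree zero kills the identity on $\CA$, and the rest is formal.
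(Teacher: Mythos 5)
Your proof is correct and follows essentially the same route as the paper: connectedness gives $\HOH^0(X)=\kk$, exactness of the triangle~\eqref{rhh} forces the unit to die in $\HOH^0(\CA)$, and $1_\CA=0$ implies $\CA=0$. Your extra care in justifying $\mathrm{res}(1_X)=1_\CA$ via the kernels $\Delta_*\CO_X\to P$ is a welcome elaboration of a step the paper declares to be clear, but it is not a different argument.
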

\begin{proof}
Again, let $\CA$ be the orthogonal subcategory. 
Note that $\HOH^0(X) = H^0(X,\CO_X) = \kk$ since $X$ is connected. Therefore, after rescaling we can assume
that $\rho(\xi) = 1_X \in \HOH^0(X)$. Since~\eqref{rhh} is exact, the image of $1_X$ under the restriction 
morphism $\HOH^0(X) \to \HOH^0(\CA)$ is zero. On the other hand, it is clear that the image equals 
$1_\CA \in \HOH^0(\CA)$. We conclude that $1_\CA = 0$ in $\HOH^\bullet(\CA)$. But this means that $\CA = 0$, 
hence the collection is full.
\end{proof}

Of course, to use this criterion one needs a method to check that $\rho(\xi) \ne 0$. 
Note that for each object $E \in \BD^b(\coh(X))$
there is a canonical evaluation morphism
\begin{equation*}
\ev_E:\HOH^0(X) \to \Hom(E,E).
\end{equation*}
For connected $X$ we know that $\HOH^0(X) = \kk$ and $\ev_E(\lambda) = \lambda \cdot \id_E \in \Hom(E,E)$. 
So to check that $\rho(\xi) \ne 0$ it is enough to find an object $E$ such that $\ev_E(\rho(\xi)) \ne 0$.
% consists only multiples of the identity endomorphism
% of the identity functor, so it suffices to check that the image of $\xi$ on some object of $\BD^b(\coh(X))$
% is nonzero. 
It is quite natural to take for $E$ one of the objects $E_i$.

Let $\eta_i \in \Hom(S^{-1}(E_i),E_i)$ be the generator of
\begin{equation*}
\Hom(S^{-1}(E_i),E_i) \cong \Hom(E_i,E_i)^\vee = \kk^\vee = \kk.
\end{equation*}

Recall that the normal Hochschild cohomology $\NHH^\bullet(\SCE,\SCD)$ is computed 
by the spectral sequence $\BE_r(\SCH)^{-p,q}$. In particular, it follows that $\NHH^0(\SCE,\SCD)$ has a filtration with factors
$\BE_\infty(\SCH)^{-p,p}$. Thus $\BE_\infty(\SCH)^{0,0}$ is a subspace in $\NHH^0(\SCE,\SCD)$
and $\BE_\infty(\SCH)^{1-n,n-1}$ is the quotient of $\NHH^0(\SCE,\SCD)$.

\begin{proposition}\label{erx}
Let $p$ be the maximal integer such that $\BE_\infty(\SCH)^{-p,p} \ne 0$. Let 
\begin{equation*}
\xi_p = \sum \xi_p^{a,k} \in \bigoplus_{a,k}
\Ext^{k_0}(E_{a_0},E_{a_1}) \otimes \dots \otimes \Ext^{k_{p-1}}(E_{a_{p-1}},E_{a_p}) \otimes \Ext^{k_p}(E_{a_p},S^{-1}(E_{a_0})) \subset \BE_1(\SCH)^{-p,p}
\end{equation*}
(the sum is over the set of all chains $a = (1\le a_0 < \dots < a_p \le n)$ of length $p+1$ and over all
collections of integers $k = (k_0,\dots,k_p)$ such that $k_0 + \dots + k_p = p$) 
be a lift of a nontrivial element $\xi \in \BE_\infty(\SCH)^{-p,p}$.
Let $\xi_p^i$ be the sum of those $\xi_p^{a,k}$ for which $a_0 = i$.
Then up to a nonzero constant
\begin{equation*}
\ev_{E_i}(\rho(\xi)) = m_{p+2}(\xi_p^{i}\otimes \eta_i).
\end{equation*}
In particular, if $m_{p+2}(\xi_p^i\otimes\eta_{i}) \ne 0$ for some $i$, then the collection $E_1,\dots,E_n$ is full.
% 
% the image of $\xi$ in $\Hom(E_{a_0},E_{a_0})$ equals $m_{p+1}(\xi_p\otimes \eta_{a_0})$.
\end{proposition}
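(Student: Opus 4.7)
The plan is to track a cocycle representative of $\xi$ through the bar-resolution description of $\rho$ from the proof of Theorem~\ref{rhhtri}, then through $\ev_{E_i}$, and to compare the outcome with the higher $A_\infty$-product $m_{p+2}$ obtained by transferring the DG-structure of $\SCD$ onto the $\Ext$-algebra of the collection.

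First I would write down $\rho$ and $\ev_{E_i}$ at the chain level. Recall that $\rho$ is induced by the $\SCD$-bimodule map $\SCD\lotimes_\SCE\SCD\to\SCD$ of iterated composition; on the bar resolution~\eqref{bardbd}, it sends a normal Hochschild tensor $\alpha_0\otimes\cdots\otimes\alpha_{p-1}\otimes\beta$ to the same tensor, now viewed in the Hochschild complex of $\SCD$. The evaluation $\ev_{E_i}:\HOH^\bullet(X)\to\Ext^\bullet(E_i,E_i)$, via the identification $\HOH^\bullet(X)\cong\SCD\lotimes_{\SCD^\opp\otimes\SCD}\SCD^\vee$ and Lemma~\ref{mudd}, restricts a bimodule class to $\bx=\by=E_i$ and contracts the fiber $\SCD^\vee(E_i,E_i)=\Hom(E_i,S^{-1}(E_i))$ against $\eta_i\in\Hom(S^{-1}(E_i),E_i)$ through the Serre duality trace. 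Combining the two, the composite $\ev_{E_i}\circ\rho$ vanishes on any $\alpha_0\otimes\cdots\otimes\alpha_{p-1}\otimes\beta$ whose chain does not start at $a_0=i$, and on the surviving terms it produces the naive iterated Yoneda composition of $\alpha_0,\ldots,\alpha_{p-1},\beta$ and $\eta_i$.

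Second, I would lift $\xi\in\BE_\infty(\SCH)^{-p,p}$ to a genuine $0$-cocycle in the total complex of $\SCH$. Since by hypothesis $p$ is the maximal index with $\BE_\infty^{-p,p}\ne0$, the element $\xi_p$ is closed for every higher differential, so it extends to a total cocycle by a chain of corrections $\xi_{p-1}\in\SCH^{1-p,p-1}$, $\xi_{p-2}\in\SCH^{2-p,p-2}$, and so on. By construction of the normal Hochschild spectral sequence, these corrections are controlled by the higher $A_\infty$-products $m_3,m_4,\ldots$ of the $\Ext$-algebra. After applying $\ev_{E_i}\circ\rho$ to the full lift, the signed sum of iterated products of pieces of $\xi$ and of the corrections, all contracted with $\eta_i$, reassembles by Kadeishvili's homological-perturbation formula over planar trees into exactly $m_{p+2}(\xi_p^i\otimes\eta_i)$.

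The fullness conclusion then follows from the preceding theorem: nonvanishing of $m_{p+2}(\xi_p^i\otimes\eta_i)$ forces $\ev_{E_i}(\rho(\xi))\ne0$, hence $\rho(\xi)\ne0$, and the hypothesis of that theorem is met. The main obstacle will be the precise sign and combinatorial matching in the second step: one has to choose a specific cocycle lift of $\xi$ whose image under $\ev_{E_i}\circ\rho$ matches, sign-for-sign, the Kadeishvili tree formula for $m_{p+2}$, and to verify that no unexpected cancellations or new contributions arise from the extra slot occupied by $\eta_i$—this is the reason for the ``up to a nonzero constant'' wording in the statement.
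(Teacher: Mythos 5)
Your overall route is the same as the paper's: express $\ev_{E_i}\circ\rho$ at the chain level on the (reduced) bar resolution, identify it with contraction against $\eta_i$, and then read off $m_{p+2}$ from the spectral sequence. But there is a genuine gap exactly at the step you treat as a routine unwinding. The evaluation map is induced by a morphism of bimodules $\eta_\bx\colon\SCD^\vee\to h_\bx\otimes h^\bx$ (obtained by dualizing the trace $\tr_E\colon E\boxtimes E^\vee\to\Delta_*\CO_X$ under Grothendieck duality), and what you need is an explicit representative of this morphism on the bar resolution of $\SCD^\vee_\SCE$, so that after tensoring with $\SCE$ you can see it is ``contract the last factor $\Ext^{k_p}(E_{a_p},S^{-1}(E_{a_0}))$ with $\eta_i$.'' You assert this formula outright; the paper explicitly states that writing down $\eta_\bx$ on the bar resolution in general is a difficult problem it cannot solve. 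The way out is the exceptionality of $E_i$: since $\Hom_{\BD(\SCD^\opp\otimes\SCD)}(\SCD^\vee,h_\bx\otimes h^\bx)\cong\Hom(S^{-1}(E_i),E_i)\cong\kk$ is one-dimensional, \emph{any} nonzero chain-level morphism --- in particular multiplication by a closed degree-zero lift $\bar\eta_{E_i}\in\Hom_\SCD(S^{-1}(E_i),E_i)$ of $\eta_i$ --- must coincide with $\eta_\bx$ up to a nonzero scalar. Without this uniqueness argument your chain-level description of $\ev_{E_i}\circ\rho$ is unjustified, and with it your worry about matching signs against the Kadeishvili tree formula largely dissolves: one does not need the two maps to agree on the nose, only up to a nonzero constant, which is precisely how the statement is phrased. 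Your second step (lifting $\xi$ to a total cocycle using maximality of $p$ and reassembling the corrections into $m_{p+2}$) is at the same level of detail as the paper's one-line conclusion and is fine as a sketch, as is the deduction of fullness from the preceding theorem.
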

\begin{proof}
Evaluation morphism is the composition
\begin{multline*}
\HOH^\bullet(X) \cong 
\Ext^\bullet(\Delta_*\CO_X,\Delta_*\CO_X) \to 
\Ext^\bullet(E\boxtimes E^\vee,\Delta_*\CO_X) \cong \\ \cong
\Ext^\bullet(\LL\Delta^*(E\boxtimes E^\vee),\CO_X) \cong
\Ext^\bullet(E \lotimes E^\vee,\CO_X) \cong
\Ext^\bullet(E,E).
\end{multline*}
where the first morphism is induced by the natural map $\tr_E:E\boxtimes E^\vee \to \Delta_*\CO_X$
(which corresponds under the above chain of morphisms to $\id_E \in \Hom(E,E)$). On the other hand,
since $\Delta_*\CO_X$ is a perfect complex on $X\times X$ we have an isomorphism
\begin{equation*}
\Ext^\bullet(\Delta_*\CO_X,\Delta_*\CO_X) \cong
\HH^\bullet(X\times X,\Delta_*\CO_X \lotimes (\Delta_*\CO_X)^\vee),
\end{equation*}
and by Grothendieck duality we have
\begin{equation*}
(\Delta_*\CO_X)^\vee \cong \Delta_*\Delta^!\CO_{X\times X} \cong \Delta_*\omega_X^{-1}[-\dim X].
\end{equation*}
Dualizing the morphism $\tr_E$ we obtain a morphism
\begin{equation*}
\eta_E:\Delta_*\omega_X^{-1}[-\dim X] \to E^\vee\boxtimes E
\end{equation*}
which combined with a sequence of isomorphisms
\begin{equation*}
\HH^\bullet(X\times X,\Delta_*\CO_X \lotimes (E^\vee \boxtimes E)) \cong
\HH^\bullet(X,\LL\Delta^*(E^\vee \boxtimes E)) \cong
\HH^\bullet(X,E^\vee \lotimes E) \cong
\Ext^\bullet(E,E)
\end{equation*}
gives a commutative diagram
\begin{equation*}
\xymatrix{
\Ext^\bullet(\Delta_*\CO_X,\Delta_*\CO_X) \ar[d]_{\ev_E} \ar[r]^-\cong &
\HH^\bullet(X\times X,\Delta_*\CO_X \lotimes (\Delta_*\CO_X)^\vee) \ar[d]^{\eta_E} \\
\Ext^\bullet(E,E) \ar@{=}[r] &
\Ext^\bullet(E,E) 
}
\end{equation*}
Translating this into the derived category $\BD(\SCD^\opp\otimes\SCD)$ of $\SCD$-$\SCD$-bimodules via
the equivalence $\mu$ we obtain a commutative diagram
\begin{equation*}
\xymatrix{
% \Hom_{\BD(\SCD^\opp\otimes\SCD)}(\SCD,\SCD) 
\HOH^\bullet(\SCD)
\ar[r]^-{\cong} \ar[d]_{\ev_\bx} &
\SCD \lotimes_{\SCD^\opp\otimes\SCD} \SCD^\vee \ar[d]^{1 \otimes \eta_\bx} \\
\Hom_\SCD(\bx,\bx) \ar@{=}[r] &
\Hom_\SCD(\bx,\bx) 
% \SCD \lotimes_{\SCD^\opp\otimes\SCD} (h_\bx \otimes h^\bx) \ar[r]^-{\cong} & \RHom_{\SCD^\opp\otimes\SCD}(h_\bx \otimes h^\bx,\SCD)
}
\end{equation*}
where $\eta_\bx:\SCD^\vee \to h_\bx\otimes h^\bx$ is the canonical morphism. 

Thus we have to identify the morphism $\eta_\bx$ as a morphism from the (reduced) bar-resolution of $\SCD^\vee$,
restrict it to the bar-resolution of $\SCD^\vee_\SCE$ and then tensor it with $\SCE$.
This turns out to be a difficult question, and we are not able to write down a general answer.
However, we will write down a simpler morphism which is enough for our purposes. 

Assume that $\epsilon(\bx) = E$ is an exceptional object.
% Let $\be$ be an object of $\SCD$ such that $\epsilon(\be) = E$ is an exceptionnal object.
Then in $\BD(\SCD^\opp\otimes\SCD) \cong \BD(X\times X)$ there is a unique morphism from $\SCD^\vee$
to $h_\bx\otimes h^\bx$. This morphism restricts to a nontrivial morphism
\begin{equation*}
\SCD^\vee(\bx,-) \xrightarrow{\ \eta_\bx\ } (h_\bx\otimes h^\bx)(\bx,-) = \Hom_\SCD(E,E)\otimes h^\bx \cong h^\bx.
\end{equation*}
By definition of the bimodule $\SCD^\vee$ the image under $\epsilon$ of the above composition is a morphism $S^{-1}(E) \to E$ 
which is unique as $E$ is exceptional. On the other hand, let $\bar\eta_E$ be a closed element of degree $0$ in 
$\Hom_\SCD(S^{-1}(E),E)$. By definition of $\SCD^\vee$ the multiplication by $\bar\eta_E$ defines a nontrivial map 
$\SCD^\vee(\bx,-) \to h^\bx$ which thus has to be equal to the map $\eta_E$. Thus we have showed that the composition
$\ev_E\circ\rho$ coincides up to a nonzero constant with the map induced by the $\bar\eta_E$ multiplication. 
It follows that in the spectral sequence the induced map is the map $m_{p+2}(-\otimes\eta_E)$.
% In case when $E = E_i$ this proves the Theorem.
% 
% 
% a composition with $\eta_E$
% 
% 
% 
% 
% On the other hand, note that there is a nontrivial map from
% $\SCD^\vee(\bx,-)$ to $h^\bx$ 
% 
% 
% 
% Recall that one of the ways to obtain the bicomplex $\SCH$ is by tensoring the reduced bar-resolution of
% the normal bimodule $\SCD^\vee_\SCE$ with the diagonal bimodule $\SCE$. Moreover, the morphism to the Hochschild
% cohomology of $X$ is induced by the isomorphism 
% \bxgin{equation*}
% \SCD \lotimes_{\SCD^\opp\otimes\SCD} \SCD^\vee \cong \RHom_{\SCD^\opp\otimes\SCD}(\SCD,\SCD)
% \end{equation*}
% which in turn is induced by isomorphism of DG-functors
% \begin{equation*}
% - \lotimes_{\SCD^\opp\otimes\SCD} \SCD^\vee \cong \RHom_{\SCD^\opp\otimes\SCD}(\SCD,-).
% \end{equation*}
% The evaluation morphism of an object $\bx \in \SCD$ is induced by the natural morphism $h_\bx \otimes h^\bx \to \SCD$
% (note that $\Hom(h_\bx \otimes h^\bx , \SCD) = \Hom_\SCD(\bx,\bx)$ contains the canonical element $1_\bx$).
% This in turn corresponds to the natural morphism $\nu_\bx:\SCD^\vee \to h_\bx \otimes h^\bx$. In other words, we have
% a commutative diagram
% 
% 
\end{proof}

As an example consider $X = \PP^{n-1}$, the projective space, and $(E_1,\dots,E_n) = (\CO,\dots,\CO(n-1))$.
Then $\Ext^k(E_i,E_j) \ne 0$ only for $k = 0$ and $\Ext^k(E_j,S^{-1}(E_i)) = \Ext^{k+1-n}(E_j,E_i(n)) \ne 0$ only for $k = n-1$.
Thus $\BE_1(\SCH)^{-p,q} \ne 0$ only for $q = n-1$. Therefore the only nontrivial differential in the spectral sequence $d_1$
is given by the multiplication $m_2$, the spectral sequence degenerates in the second term, and so $\NHH^\bullet(\SCE,\SCD)$
is the cohomology of the complex
\begin{equation*}
V^{\otimes n} \to \bigoplus_{i=1}^{n} V^{\otimes i-1} \otimes S^2V \otimes V^{\otimes n-i-1} \to \dots
% \oplus V^{\otimes n-2}\otimes S^2V \to \dots
\end{equation*}
(the last summand in the second term corresponds to symmetrization of the last and the first factors of the first term).
Therefore, $\NHH^0(\SCE,\SCD)$ is the set of all $\xi \in V^{\otimes n}$ such that the symmetrization of $\xi$
with respect to any pair of cyclically adjacent indices is zero. Thus $\xi$ should be completely antisymmetric, that is
$\xi \in \Lambda^n V \subset V^{\otimes n}$. One can check that $m_{n+1}(\xi\otimes\eta_1) \ne 0$ which gives yet another 
proof of the fact that the Beilinson collection is full.

The element $\xi$ looks to be closely related to the {\em quantum determinant} considered in~\cite{BP}.
It would be very interesting to understand the relationship.

An advantage of this criterion of fullness is the fact that to check fullness of a collection one just
has to guess appropriate $\xi$. After that only two things should be checked. First, that $\xi$ is a cocylcle,
which means that multiplications $m_2,\dots,m_{p+1}$ vanish on $\xi$, and second, that $m_{p+2}(\xi\otimes\eta_1)$
does not vanish.

\end{document}